\begin{document}
	\title{Certain squarefree levels of reducible modular mod$\,\ell$ Galois representations}
	\date{}
	\author{Arvind Kumar and Prabhat Kumar Mishra}

	\address{Department of Mathematics, Indian Institute of Technology Jammu, Jagti, PO Nagrota, NH-44 Jammu 181221, J \& K, India\vspace*{-5pt}}
	\email{arvind.kumar@iitjammu.ac.in\vspace*{-6pt}}
	\email{2022rma0025@iitjammu.ac.in\vspace*{-6pt}}

	\subjclass[2020]{}
	\keywords{ Modular forms, Congruences, Galois representations}
	
	\begin{abstract}
Let $k \ge 2$ be an even integer, $ \ell \ge \max\{5, k-1\} $ be a prime, and $N$ be a squarefree positive integer. It is known that if the $\oq{mod\,\ell}$ Galois representation $\overline{\rho}_f$ associated with a newform $f$ of weight $k$, level $N$, and trivial nebentypus is reducible, then  $\overline{\rho}_f \simeq 1 \oplus \overline{\chi}_\ell^{k-1}$, up to semisimplification, where $\overline{\chi}_\ell^{}$ is the $\oq{mod\,\ell}$ cyclotomic character.  In this paper, we determine the necessary and sufficient conditions under which the $\oq{mod\,\ell}$ representation $1 \oplus \overline{\chi}_\ell^{k-1}$ arises from a newform of weight $k$, level $N$ with exactly two prime factors with specified Atkin-Lehner eigenvalues. Specifically, this proves a conjecture of Billerey and Menares when $N$ is a product of two primes under some mild assumption.
 As an application, we show that for any $\ell\ge 5$ and $k=2$ or $\ell+1$, there exist a large class of distinct primes $p$ and $q$ such that the $\oq{mod\,\ell}$ representation $1 \oplus \overline{\chi}_\ell^{k-1}$ arises from a newform of weight $k$ and level $pq$ with explicit Atkin-Lehner eigenvalues.
	\end{abstract}
	\maketitle

\section{Introduction}
Let $k\ge 2$ be an even integer and $\ell$ be a prime. Throughout the article, we assume that $N$ is a squarefree positive integer. We use the notation $\overline{K}$ to denote an algebraic closure of a number field $K$, $\bb{F}_{\ell}$ to denote the finite field with $\ell$ elements, and  $G_{\bb{Q}}$ to denote the absolute Galois group $\oq{Gal}(\overline{\bb{Q}}/\bb{Q})$.
Let $\mathcal{S}_k(N)$ be the space of cusp forms of weight $k$  and level $N$, i.e., for the congruence subgroup $\Gamma_0(N)$.
For a (normalized) newform $f(z) = \sum_{n\geq 1} a_f(n)e^{2\pi i n z}\in \mathcal{S}_k(N)$, from the works of Eichler-Shimura and Deligne, we have an odd semisimple $\oq{mod}\,\ell$ Galois representation 
\[
        \overline{{\rho}}_{f,\Lambda} : G_{\mathbb{Q}}\rightarrow \oq{GL}_2(\overline{\bb{F}}_{\ell})
\]
that is unique up to isomorphism and unramified at primes $q\nmid \ell N$ and satisfies 
$$\mathrm{tr}(\overline{\rho}_{f,\Lambda}(\mathrm{Frob}_q)) = a_f(q)\modlam ~~ \text{  and } ~~ \mathrm{det}(\overline{\rho}_{f,\Lambda}(\mathrm{Frob}_q)) = q^{k-1}\modlam,$$ 
where $\mathrm{Frob}_q \in\oq{Gal}(\overline{\mathbb{Q}}/\mathbb{Q})$ denotes a Frobenius element at $q$ and $\Lambda$ denotes a prime above $\ell$ in the coefficient field $\mathbb Q(a_f(n):n\ge 1)$ of $f$.

A $\oq{mod\,\ell}$ Galois representation $\overline{{\rho}}: G_{\bb{Q}}\rightarrow \oq{GL}_2(\overline{\bb{F}}_\ell)$ is said to be a modular $\oq{mod\,}\ell$ Galois representation if it arises from a newform (of trivial nebentypus), i.e., there exists a newform $f$ such that $\overline{\rho}\simeq \overline{\rho}_{f,\Lambda}$, for some prime ideal $\Lambda$ over $\ell$ in the coefficient field of $f$. It is natural to ask which odd $\oq{mod\,}\ell$ Galois representations are modular. Since a $\oq{mod\,}\ell$ Galois representation may not arise from a unique newform, it is equally important to investigate the level and weight of the newforms which give rise to $\bar{\rho}$.

 Khare and Wintenberger proved the Serre's conjecture which states that every odd irreducible $\oq{mod}\,\ell$ Galois representation $\overline{\rho}$ arises from an eigenform in the space $\mathcal{S}_k(N(\overline{\rho}))$, where $N(\overline \rho)$ is a positive integer coprime to $\ell$ and equals the Artin conductor of $\overline{\rho}$.  The integer $N(\overline{\rho})$ is called the \emph{optimal level}. Diamond and Taylor \cite{dl} studied the levels $M> N(\overline{\rho})$ of newform giving rise to $\overline{\rho}$ and {called these integers {\em `non-optimal levels'}.}

\pagebreak 

Suppose $\overline{\chi}_\ell$ represents the  $\oq{mod\,\ell}$ cyclotomic character of $G_{\bb{Q}}$. An  odd reducible modular $\oq{mod\,\ell}$ Galois representation of squarefree level is isomorphic to $1\oplus \overline{\chi}_{\ell}^{k-1}$, up to semisimplification, for some $k$ satisfying $\ell \ge k-1$ (for a proof, see \cite[Proposition 3.1]{bm}). 
{The Artin conductor of $\overline{\chi}_{\ell}$ is $1$, so the optimal level of $1\oplus \overline{\chi}_{\ell}^{k-1}$ is 1. In contrast to the irreducible case, the representation $1\oplus \overline{\chi}_{\ell}$ is not modular of optimal level for weight 2 despite being odd and semisimple.} 
However, for even weight $k\geq 4$ and prime $\ell>k+1$, Ribet \cite[Lemma 5.2]{rib2} has proved that the representation $1\oplus \overline{\chi}_\ell^{k-1}$ is modular of optimal level iff $\ell\mid \frac{B_k}{2k}$, where $B_k$ is the $k$th Bernoulli number. 

This article focuses on studying the non-optimal squarefree levels of an odd, reducible modular $\oq{mod\,\ell}$ Galois representation, which is equivalent to studying the non-optimal squarefree levels of $1\oplus \overline{\chi}_{\ell}^{k-1}$, for even $k\ge 2$. 
Several works have been done to determine non-optimal prime levels of $1\oplus \overline{\chi}_{\ell}^{k-1}$ and we mention some of these results now. 
  For $k=2$, Mazur \cite[Proposition 5.12]{maz} first identified non-optimal prime levels of $1\oplus \overline{\chi}_\ell$  by showing that it arises from a weight $2$ newform of prime level $p$ iff $\ell\mid (p-1)/12$. 
 For $k\ge 4$ with $\ell>k+1$, Billerey-Menares \cite{bm} gave a necessary and sufficient conditions for a prime number to be a non-optimal level of $1\oplus \overline{\chi}_\ell^{k-1}$ (see also, \cite{df}). Gaba-Popa \cite{gp} and Kumar et al. \cite{kkms} refined these results by introducing the Atkin-Lehner eigenvalues of the newforms of the prime level involved.

\subsection{Conjecture for squarefree level}
Billerey-Menares \cite[Conjecture 3.2]{bm} proposed a conjecture  for determining all the non-optimal squarefree levels $N$ of the representation $1\oplus \overline{\chi}_\ell^{k-1}$, for any $k\ge 4$.  
In the following, we refine that conjecture by introducing an Atkin-Lehner eigensystem for $\Gamma_0(N)$, which is defined as a multiplicative function $\varepsilon: \mathcal{P}_N \rightarrow \{\pm 1\}$  with $\varepsilon(1)=1$, where $\mathcal{P}_{N}$ denotes the set of positive divisors of $N$. Note that $\varepsilon$ can also be considered as an Atkin-Lehner eigensystem for $\Gamma_0(M)$ for any $M\mid N$ by restricting it to $\mathcal{P}_M$, which we use throughout the article without mentioning explicitly. Let $\mathcal{M}_k(N)$ be the space of modular forms of weight $k$ and level $N$.  For a form $f\in \mathcal M_k(N)$, we say $\varepsilon$ is the Atkin-Lehner eigensystem of $f$  if $\varepsilon(p)$ is the eigenvalue of $f$ under the action of Atkin-Lehner operator $W_p$ for each $p\mid N$ and in this case we write $f\in \mathcal{M}_k^{(\varepsilon)}(N)$ and similarly we define $\mathcal{S}_k^{(\varepsilon)}(N)$.

\begin{conjecture}[Generalized Billerey-Menares Conjecture]\label{conjecture}
Let $k\geq 4$,  $\ell>k+1$ be a prime, and $N=p_1\cdots p_t$, where $p_i's$ are distinct primes. Let $\varepsilon$ be an Atkin-Lehner eigensystem  for $\Gamma_0(N)$. Then following are equivalent:
\begin{enumerate}
 \item $ \ell \mid \frac{B_k}{2k}  \prod\limits_{i=1}^{t}(1+\varepsilon(p_i) p_i^{k/2}) \quad {\rm and} \quad \ell\mid (1+\varepsilon(p_i) p_i^{k/2})(1+\varepsilon(p_i) p_i^{k/2-1})~{{  for ~each} ~1\leq i\leq t}$.
    
   \item There exist  a newform $f\in \mathcal{S}_k^{(\varepsilon)}(N)$ and a prime ideal $\Lambda $ lying over $\ell$ in the coefficient field of $f$ such that
   \[
        \overline{\rho}_{f,\Lambda} \simeq 1\oplus \overline{\chi}_\ell^{k-1}. 
   \]
   \end{enumerate}
\end{conjecture}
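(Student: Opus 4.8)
The plan is to prove the equivalence by understanding precisely how the Eisenstein-to-cuspidal congruence for $\Gamma_0(N)$ decomposes across the prime factors of $N$, working one prime at a time via the Atkin-Lehner decomposition. The natural starting point is the weight-$k$ level-$1$ Eisenstein series $E_k$, whose constant term is essentially $-\tfrac{B_k}{2k}$; the condition $\ell \mid \tfrac{B_k}{2k}$ is the level-$1$ Eisenstein congruence, and the factors $(1+\varepsilon(p_i)p_i^{k/2})$ record the effect of the degeneracy/Atkin-Lehner operators when one raises the level from $1$ to $N$. So first I would set up the space $\mathcal{M}_k^{(\varepsilon)}(N)$ of modular forms with a fixed Atkin-Lehner eigensystem, identify its Eisenstein subspace $\mathcal{E}_k^{(\varepsilon)}(N)$, and compute a distinguished Eisenstein series $E_k^{(\varepsilon)}(N)$ in it together with its constant term at the cusp $\infty$ (and, if needed, at $0$). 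The key arithmetic input will be that, after normalizing, the constant term of this Eisenstein series is a unit multiple of $\tfrac{B_k}{2k}\prod_i(1+\varepsilon(p_i)p_i^{k/2})$ times a $\Lambda$-unit, so that the first displayed divisibility in (1) is exactly the condition for the existence of a cusp form in $\mathcal{S}_k^{(\varepsilon)}(N)$ congruent to $E_k^{(\varepsilon)}(N)$ modulo a prime over $\ell$. This is the Eisenstein ideal / Mazur-style argument: a congruence between an Eisenstein series and the cuspidal part exists iff the relevant constant term lies in $\Lambda$.

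Second, granting such a cuspidal congruence, I would pass from ``a cusp form congruent to $E_k^{(\varepsilon)}(N)$'' to ``a \emph{newform} of level exactly $N$ with eigensystem $\varepsilon$ giving $\overline{\rho} \simeq 1\oplus\overline{\chi}_\ell^{k-1}$.'' This requires two things. (a) The Galois representation attached to any Hecke eigenform congruent to $E_k$ is reducible with semisimplification $1\oplus\overline{\chi}_\ell^{k-1}$ — immediate from comparing traces of Frobenius, $a(q) \equiv 1 + q^{k-1}$. (b) The level of the newform is genuinely $N$ and not a proper divisor: this is where the \emph{second} family of conditions $\ell \mid (1+\varepsilon(p_i)p_i^{k/2})(1+\varepsilon(p_i)p_i^{k/2-1})$ enters. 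The point is a level-lowering/level-raising analysis à la Ribet and Diamond-Taylor: a newform of level $M \mid N$ with $M \neq N$ gives $\overline{\rho}\simeq 1\oplus\overline{\chi}_\ell^{k-1}$ and, for a prime $p \mid N$ with $p \nmid M$, the local behavior at $p$ forces $a(p)^2 \equiv \langle p\rangle p^{k-2}(1+p)^2$-type relations; translating these through the $U_p$-eigenvalue $a(p) = \pm p^{k/2-1}$ of the $p$-stabilization shows that such an old-at-$p$ congruence can occur only when $\ell \mid (1+\varepsilon(p)p^{k/2-1})$ or $\ell \mid (1-\varepsilon(p)p^{k/2-1})$, etc. Conversely, the ``level-raising'' condition $\ell\mid(1+\varepsilon(p)p^{k/2})(1+\varepsilon(p)p^{k/2-1})$ is exactly Ribet's criterion for a mod-$\ell$ eigenform of level $M$ (prime to $p$) to admit a newform of level $Mp$ with $W_p$-eigenvalue $\varepsilon(p)$ in its congruence class; iterating over the $t$ primes builds the newform of full level $N$ with the prescribed eigensystem. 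So (1) $\Rightarrow$ (2) is: level-$1$ congruence $+$ iterated level-raising; and (2) $\Rightarrow$ (1) is: the newform's constant-term congruence gives the product condition, and the old/new dichotomy at each $p_i$ (it \emph{is} new at $p_i$) forces the per-prime condition.

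For the bookkeeping I would argue by induction on $t$. Write $N = N'p$ with $p = p_t$ and $N' = p_1\cdots p_{t-1}$, and let $\varepsilon'$ be the restriction of $\varepsilon$ to $\mathcal{P}_{N'}$. The inductive hypothesis handles $N'$; the inductive step is precisely the ``one prime'' statement: given the equivalence at level $N'$, add the prime $p$ with eigenvalue $\varepsilon(p)$, using (i) the Eisenstein constant-term computation which multiplies the previous constant term by $(1+\varepsilon(p)p^{k/2})$ up to a $\Lambda$-unit, and (ii) Ribet's level-raising at $p$ controlled by $(1+\varepsilon(p)p^{k/2})(1+\varepsilon(p)p^{k/2-1})$. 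One has to be slightly careful that the intermediate object at level $N'$ supplied by the induction is a newform (optimal-ish level), not merely an eigenform congruent to the Eisenstein series; but since the per-prime conditions at $p_1,\dots,p_{t-1}$ hold, the construction at each stage already produces a newform of exactly that level, so the induction is clean. The constant-term computation at all cusps of $\Gamma_0(N)$ and the precise normalization making the Eisenstein series $\ell$-integral with the right unit constant term is routine but must be done carefully.

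\textbf{Main obstacle.} The crux — and the step I expect to be hardest — is (b) in the second paragraph: pinning down the \emph{exact} level of the newform in the congruence class, i.e.\ showing that under the stated hypotheses the newform is new at \emph{every} $p_i \mid N$, and conversely that if it is new at $p_i$ then the per-prime condition $\ell \mid (1+\varepsilon(p_i)p_i^{k/2})(1+\varepsilon(p_i)p_i^{k/2-1})$ must hold. In the reducible case one cannot invoke the irreducible level-lowering theorems of Ribet directly, so one needs a careful analysis of the mod-$\ell$ local representation at $p_i$ (it is reducible, possibly split, possibly ramified) and of the $U_{p_i}$-action on the Eisenstein-congruent part of $\mathcal{S}_k(N)$ — essentially computing the characteristic polynomial of $U_{p_i}$ on the relevant Hecke module modulo $\ell$ and reading off which roots $\pm p_i^{k/2-1}$ occur. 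This is exactly the analogue, in the $t>1$ setting, of the prime-level computations of Gaba-Popa and Kumar et al., and extending their $U_p$-eigenvalue bookkeeping to handle the interaction of several primes simultaneously (while keeping track of the Atkin-Lehner signs, which on the Eisenstein series satisfy $W_{p}U_{p} = p^{k/2-1}$-type relations) is where the real work lies.
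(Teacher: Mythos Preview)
The statement you are attempting to prove is a \emph{conjecture}, and the paper does not prove it in full. What the paper establishes is: the implication $(2)\Rightarrow(1)$ holds in general (this is the ``reverse implication'' handled by Lemma~\ref{Converse}, essentially your constant-term and local-trace argument); the implication $(1)\Rightarrow(2)$ holds if one replaces ``newform of level $N$'' by ``eigenform in $\mathcal{S}_k(N)$'' (Theorem~\ref{existence of eigenform}); and the full equivalence is proved only for $t=2$, i.e.\ $N=pq$, and even then only under extra hypotheses such as $\ell\nmid\frac{B_k}{2k}\phi(pq)(q+1)$ (Corollary~\ref{cor_conj}).

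Your induction on $t$ via iterated level-raising is exactly the natural strategy, and the paper follows it as far as it goes, but there is a genuine gap in your step (b). Diamond's level-raising theorem (Theorem~\ref{level raising theorem} in the paper) does not produce a newform of level $N'p$; it produces a newform of level $dp$ for \emph{some} $d\mid N'$. So after one application you may land at a strictly smaller level, and the induction does not close. The paper's Theorem~\ref{newform_dp} is precisely the honest version of your inductive step: under $(1)$ plus the auxiliary hypothesis $\ell\nmid\frac{B_k}{2k}\phi(Np)(p_r+1)$, one obtains a newform of level $dp$ with $1<d\mid N$ and the correct Atkin--Lehner signs, but not necessarily $d=N$. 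Ruling out the lower-level possibilities requires, in the paper's argument, the extra non-divisibility assumptions on $\ell$ (to exclude $\ell\mid(1+\varepsilon(q)q^{k/2})$ simultaneously with $\ell\mid(1+\varepsilon(q)q^{k/2-1})$, which would force $\ell\mid q-1$); without such assumptions the argument stalls. Your ``main obstacle'' paragraph identifies this difficulty correctly, but your proposal does not actually overcome it---and, as far as the paper is concerned, neither does anyone else for general $t$. In short: your $(2)\Rightarrow(1)$ is fine and matches the paper; your $(1)\Rightarrow(2)$ would prove an open conjecture, and the specific mechanism you propose (clean induction via level-raising) fails because level-raising does not control the cofactor $d$.
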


{The reverse implication easily follows from the congruence of Lemma \ref{Converse}} by considering the constant terms and $p$th Fourier coefficients for each $p\mid N$.
In general, the above conjecture is not valid for $k=2$ and an example is given in Section \ref{examples} (see Example \ref{exm}). 
 In Theorem \ref{existence of eigenform}, we prove that the above conjecture is true for eigenforms (by which we mean eigenfunction of all the Hecke operators) instead of newforms, and its somewhat weaker version was proven in \cite[Theorem 3.5]{bm}.

We emphasize that Gaba-Popa \cite{gp} and Kumar et al. \cite{kkms} showed that Conjecture \ref{conjecture} is true if $N$ is a prime under some mild assumptions. Recently, Deo \cite[Corollary 1.8]{svd} has proved the original conjecture of Billerey-Menares \cite[Conjecture 3.2]{bm} in some cases under certain assumptions. The main aim of this article is to prove Conjecture \ref{conjecture} for levels with exactly two prime factors (see Corollary \ref{cor_conj}).

We remark that determining the non-optimal levels of the representation $1\oplus \overline{\chi}_\ell^{k-1}$ is equivalent to knowing the cases when a newform of weight $k$ and level $N$ is congruent to a suitable Eisenstein series of the same weight and level (in the spirit of Ramanujan's 691 congruence). To be more precise,  
given $k\ge 2$, a squarefree positive integer $N$, and an Atkin-Lehner eigensystem $\varepsilon$ of $\Gamma_0(N)$, we define  
\begin{equation}\label{Eisenstein series }
     \mathcal{E}^{(\varepsilon)}_{k,N}(z)\coloneqq  \sum\limits_{\substack{{d|N}\\}} \varepsilon(d) d^{k/2}E_k(dz),
\end{equation}
 where  $\displaystyle{E_k(z)}$ is the Eisenstein series of weight $k$ and level $1$. If $k=2$, we also assume that $\varepsilon(p)=-1$, for some $p\mid N$. Then   $\mathcal{E}^{(\varepsilon)}_{k,N}\in  \mathcal{M}_k^{(\varepsilon)}(N)$ (see, Section \ref{eisenstein series} for more details) and we have the following result.

    \begin{lemma}\label{Converse}
    With the same notation as in Conjecture \ref{conjecture}, let $f\in \mathcal{S}_k^{(\varepsilon)}(N)$ be a newform and  $\Lambda $ be a prime lying over $\ell$ in the coefficient field of $f$. Then
    $$
     \overline{\rho}_{f,\Lambda} \simeq 1\oplus \overline{\chi}_{\ell}^{k-1} \quad \textit{if and only if}\quad  f(z) \equiv \mathcal{E}_{k,N}^{(\varepsilon)}(z)\modlam.
     $$
        \end{lemma}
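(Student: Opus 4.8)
The plan is to translate between the isomorphism class of $\overline{\rho}_{f,\Lambda}$ and congruences among Fourier coefficients, after first making the $q$-expansion of $\mathcal{E}_{k,N}^{(\varepsilon)}$ explicit. Expanding $E_k=-\tfrac{B_k}{2k}+\sum_{n\ge 1}\sigma_{k-1}(n)q^n$ and using multiplicativity of $\sigma_{k-1}$ together with $N$ squarefree, write $\mathcal{E}_{k,N}^{(\varepsilon)}=\sum_{n\ge 0}b_nq^n$; then $b_0=-\tfrac{B_k}{2k}\prod_{p\mid N}\bigl(1+\varepsilon(p)p^{k/2}\bigr)$, the arithmetic function $n\mapsto b_n$ is multiplicative, $b_{q^r}=\sigma_{k-1}(q^r)$ for $q\nmid N$, and $b_{p^r}=\sigma_{k-1}(p^r)+\varepsilon(p)p^{k/2}\sigma_{k-1}(p^{r-1})$ for $p\mid N$; in particular $b_q=1+q^{k-1}$ for every prime $q\nmid N$, and $b_p+\varepsilon(p)p^{k/2-1}=\bigl(1+\varepsilon(p)p^{k/2-1}\bigr)\bigl(1+\varepsilon(p)p^{k/2}\bigr)$. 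The reverse implication is then immediate: if $f\equiv\mathcal{E}_{k,N}^{(\varepsilon)}\modlam$ then $a_f(q)\equiv b_q=1+q^{k-1}\modlam$ for all primes $q\nmid\ell N$, so $\operatorname{tr}\overline{\rho}_{f,\Lambda}(\mathrm{Frob}_q)$ and $\det\overline{\rho}_{f,\Lambda}(\mathrm{Frob}_q)$ agree with those of $1\oplus\overline{\chi}_\ell^{k-1}$ on a set of primes of density one, and since both representations are semisimple, Brauer--Nesbitt together with the Chebotarev density theorem gives $\overline{\rho}_{f,\Lambda}\simeq 1\oplus\overline{\chi}_\ell^{k-1}$.

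For the forward implication, assume $\overline{\rho}_{f,\Lambda}\simeq 1\oplus\overline{\chi}_\ell^{k-1}$ and set $g\coloneqq f-\mathcal{E}_{k,N}^{(\varepsilon)}$. For primes $q\nmid\ell N$, the Eichler--Shimura--Deligne relations give $a_f(q)\equiv 1+q^{k-1}=b_q\modlam$. For a prime $p\mid N$: since $f$ is a newform of squarefree level with Atkin--Lehner eigenvalue $\varepsilon(p)$ at $p$, Atkin--Lehner--Li theory gives $a_f(p)=-\varepsilon(p)p^{k/2-1}$ and $a_f(p)^2=p^{k-2}$; moreover the $\ell$-adic local representation $\rho_{f,\Lambda}\vert_{G_{\mathbb{Q}_p}}$ is, up to an unramified quadratic twist, the special representation, so the semisimplification of $\overline{\rho}_{f,\Lambda}\vert_{G_{\mathbb{Q}_p}}$ is a sum of unramified characters with Frobenius eigenvalues $\{\overline{a_f(p)},\,p\,\overline{a_f(p)}\}$. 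On the other hand $\overline{\chi}_\ell$ is unramified at $p\ne\ell$, so $1\oplus\overline{\chi}_\ell^{k-1}\vert_{G_{\mathbb{Q}_p}}$ has Frobenius eigenvalues $\{1,p^{k-1}\}$; equating these two semisimple modules forces $1\in\{-\varepsilon(p)p^{k/2-1},\,-\varepsilon(p)p^{k/2}\}$ in $\overline{\mathbb{F}}_\ell$, i.e. $\ell\mid 1+\varepsilon(p)p^{k/2-1}$ or $\ell\mid 1+\varepsilon(p)p^{k/2}$. Either way $\ell$ divides $(1+\varepsilon(p)p^{k/2-1})(1+\varepsilon(p)p^{k/2})$, which by the identity above equals $b_p-a_f(p)$; hence $a_f(p)\equiv b_p\modlam$.

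Now both $n\mapsto a_f(n)$ and $n\mapsto b_n$ are multiplicative, so to obtain $a_f(n)\equiv b_n\modlam$ for all $n\ge 1$ coprime to $\ell$ it suffices to treat prime powers: for $q\nmid\ell N$ one compares the Hecke recursion $a_f(q^{r+1})=a_f(q)a_f(q^r)-q^{k-1}a_f(q^{r-1})$ with $b_{q^r}=\sigma_{k-1}(q^r)$, and for $p\mid N$ one compares $a_f(p^r)=a_f(p)^r$ (with $a_f(p)^2=p^{k-2}$) against the explicit formula for $b_{p^r}$, and in both cases a short induction built on the two preceding congruences closes the loop. Finally, since $\ell>k+1$ we have $(\ell-1)\nmid k$, so by the von Staudt--Clausen theorem $B_k/(2k)$ is $\ell$-integral, $b_0$ is $\ell$-integral, and $g\in\mathcal{M}_k^{(\varepsilon)}(N)$ has $\Lambda$-integral coefficients; its mod-$\ell$ reduction $\bar g$ then has vanishing Fourier coefficients at every index prime to $\ell$, whence $\theta\bar g=0$ for the theta operator $\theta\colon\sum c_nq^n\mapsto\sum nc_nq^n$. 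As $\bar g$ has filtration $\le k<\ell-1$, the theory of mod-$\ell$ modular forms forces $\bar g$ to be a constant, and a nonzero constant cannot occur in weight $k$ since $(\ell-1)\nmid k$; therefore $\bar g=0$, i.e. $f\equiv\mathcal{E}_{k,N}^{(\varepsilon)}\modlam$ (the congruence of constant terms now included).

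I expect the main obstacle to be the local analysis at $p\mid N$: invoking local--global compatibility correctly (that $\pi_p$ is an unramified twist of Steinberg, the twist being quadratic because the nebentypus is trivial, with the Atkin--Lehner sign encoded by $a_f(p)=-\varepsilon(p)p^{k/2-1}$) and then extracting exactly the divisibility $\ell\mid(1+\varepsilon(p)p^{k/2-1})(1+\varepsilon(p)p^{k/2})$ rather than something weaker. The multiplicativity step is routine bookkeeping, the only thing to verify being that $n\mapsto b_n$ is multiplicative (which follows from multiplicativity of $\sigma_{k-1}$ and squarefreeness of $N$); and the concluding step is where the hypothesis $\ell>k+1$ is genuinely used, both to keep $B_k/(2k)$ $\ell$-integral and to run the theta-operator/filtration argument without obstruction.
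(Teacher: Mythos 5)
Your argument is correct and follows essentially the same route as the paper: the reverse direction is the same Chebotarev/Brauer--Nesbitt observation, and the forward direction uses the local structure at $p\mid N$ (your Steinberg/local--global compatibility description is precisely the paper's appeal to Langlands' theorem, Theorem~\ref{langland}) to extract $\ell\mid(1+\varepsilon(p)p^{k/2-1})(1+\varepsilon(p)p^{k/2})$, matches Hecke eigenvalues at all primes away from $\ell$, and then closes with the theta-operator injectivity argument (Katz) using $\ell>k+1$. The only cosmetic difference is that you verify $a_f(n)\equiv b_n$ at prime powers by hand via the explicit $q$-expansion of $\mathcal{E}_{k,N}^{(\varepsilon)}$ and the Hecke recursion, where the paper instead invokes Corollary~\ref{Up eigenfunction} to see that $\overline{\mathcal{E}}_{k,N}^{(\varepsilon)}$ is a mod-$\ell$ eigenform and then compares eigenforms directly.
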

The direct implication is proved in Section \ref{proof of converse} while the reverse implication trivially follows from the Chebetarov density theorem.

\subsection{Main Results}
  The next two results determine the necessary and sufficient conditions for the existence of a newform of level having a product of two primes congruent to an Eisenstein series of the same level. We emphasize that our results are true for $k=2$ as well. In the following theorem and the ensuing results, we use the notation $\phi$ to denote the familiar Euler-$\phi$ function.

\begin{theorem}\label{result for N=p}
     Let $k\geq 2$ be even, $p$, $q$ and $\ell$ be distinct primes, and $\varepsilon$ be an Atkin-Lehner eigensystem for $\Gamma_0(pq)$. Assume that $\ell\ge \max\{5,k-1\}$, $\ell\neq k+1$ and 
         $  \ell\nmid \frac{B_k}{2k}\phi(pq) (q+1)$.
     Suppose that
    \begin{align}\label{divis}
      \ell\mid (1+\varepsilon(p) p^{k/2}) \quad \oq{and}\quad \ell\mid (1+\varepsilon(q) q^{k/2-1}).
      \end{align} 
         Then there exists a newform $f\in \mathcal{S}_k^{(\varepsilon)}(pq)$  such that 
\begin{equation}\label{cong_pq}
      f(z)\equiv \mathcal{E}_{k,pq}^{(\varepsilon)}(z) \modlam ,
\end{equation}
    for some prime $\Lambda $ over $\ell$ in a sufficiently large field. If $k=2$, then under the same assumptions as above but removing the condition $\ell\nmid(q+1)$, there exists a newform $f \in \mathcal{S}_2(pq)$ such that
$$\overline{\rho}_{f,\Lambda} \simeq 1\oplus \overline{\chi}_{\ell}.$$
     
\end{theorem}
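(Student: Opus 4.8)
The plan is to realize the congruence \eqref{cong_pq} by exhibiting the Eisenstein series $\mathcal E_{k,pq}^{(\varepsilon)}$ as congruent mod $\Lambda$ to a genuine cusp form, and then upgrading "congruent to a cusp form" to "congruent to a newform" using the structure theory of oldforms/newforms together with the hypothesis $\ell\nmid\phi(pq)(q+1)$. First I would recall that the Eisenstein subspace of $\mathcal M_k(pq)$ with eigensystem $\varepsilon$ is spanned by (twists of) $E_k(dz)$ for $d\mid pq$; the key point is that $\mathcal E_{k,pq}^{(\varepsilon)}$ has constant term a nonzero multiple of $\prod_{d\mid pq}\varepsilon(d)d^{k/2}$ times the constant term of $E_k$, i.e. essentially $\tfrac{B_k}{2k}(1+\varepsilon(p)p^{k/2})(1+\varepsilon(q)q^{k/2})$ (up to the usual normalisation), while the divisibility hypotheses \eqref{divis} force $\ell\mid (1+\varepsilon(p)p^{k/2})$. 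Thus the constant term of $\mathcal E_{k,pq}^{(\varepsilon)}$ vanishes mod $\Lambda$, and a suitable $\ell$-integral multiple of $\mathcal E_{k,pq}^{(\varepsilon)}$ reduces mod $\Lambda$ to an element of $\overline{\mathbb F}_\ell\otimes\mathcal S_k(pq)$ — more precisely, to a mod $\ell$ cusp form in the sense of Serre/Katz. Here one must be slightly careful when $k=2$ (there is no $E_2$ of level $1$), which is exactly why the statement in that case assumes $\varepsilon(p)=-1$ for some $p$, making $\mathcal E_{2,pq}^{(\varepsilon)}$ a holomorphic weight-$2$ form on $\Gamma_0(pq)$ to begin with; the weight-$2$ conclusion is then deduced from \eqref{cong_pq} via Lemma \ref{Converse}.

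The second step is a Deligne–Serre type lifting argument: a nonzero mod $\ell$ cusp form that is a Hecke eigenform lifts to a characteristic-zero eigenform of the same weight and level with congruent eigenvalues, hence to a newform of some level $M\mid pq$. So I would first arrange that the mod $\ell$ reduction of $\mathcal E_{k,pq}^{(\varepsilon)}$ lies in the generalized eigenspace for the Eisenstein system $(1,\ \overline\chi_\ell^{k-1})$, i.e. the Hecke eigenvalues are $a_n\equiv\sum_{d\mid n,(d,pq)=1}d^{k-1}$ together with the prescribed $U_p,U_q$ eigenvalues dictated by $\varepsilon$; the divisibility conditions \eqref{divis} are precisely what is needed for this Eisenstein system to be consistent with Atkin–Lehner sign $\varepsilon(p)$ at $p$ and $\varepsilon(q)$ at $q$ (the conditions $\ell\mid 1+\varepsilon(p)p^{k/2}$ and $\ell\mid 1+\varepsilon(q)q^{k/2-1}$ encode that $U_p$ and $U_q$ act with the right eigenvalue making $W_p,W_q$ act by $\varepsilon(p),\varepsilon(q)$ mod $\ell$). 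Then I would invoke that the Hecke algebra acting on $\mathcal S_k(pq)$, completed at the maximal ideal $\mathfrak m$ attached to this Eisenstein system, is nonzero — which follows once we know the Eisenstein series is congruent to a cusp form, i.e. once the relevant Eisenstein ideal is nonunit — and deduce the existence of a cuspidal eigenform $g$ with $\overline\rho_{g,\Lambda}\simeq 1\oplus\overline\chi_\ell^{k-1}$, semisimplification-wise, of some level $M\mid pq$.

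The third and most delicate step is to show that one can take $M=pq$ exactly, with the prescribed Atkin–Lehner eigensystem $\varepsilon$, rather than a proper divisor. This is where the hypotheses $\ell\nmid\tfrac{B_k}{2k}\phi(pq)$ and $\ell\nmid(q+1)$ enter. The idea is: if the eigenform $g$ produced above had level $1$, $p$, or $q$, then by Lemma \ref{Converse} (applied at those smaller levels, or by Ribet's level-lowering philosophy for reducible representations) the corresponding congruence would force $\ell\mid\tfrac{B_k}{2k}$ (level $1$ case, ruled out by $\ell\nmid\tfrac{B_k}{2k}$), or $\ell\mid\tfrac{B_k}{2k}(1+\varepsilon(p)p^{k/2})(1+\varepsilon(p)p^{k/2-1})$ with the extra local condition at $p$ (level $p$), or the analogous statement at $q$. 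The conditions $\ell\nmid\phi(pq)=(p-1)(q-1)$ and $\ell\nmid(q+1)$ are engineered to rule out these smaller levels: e.g. $\ell\nmid(q-1)(q+1)=q^2-1$ rules out a newform of level $q$ (via the analog of Mazur's criterion $\ell\mid(q-1)/12$ and its weight-$k$ generalization, combined with the $W_q$-sign), and similarly at $p$ one uses $\ell\mid(1+\varepsilon(p)p^{k/2})$ together with $\ell\nmid(p-1)(p+1)$ to show the $p$-part cannot be "shed". Concretely I would: (i) enumerate the possible levels $M\in\{1,p,q,pq\}$ of the newform underlying $g$; (ii) for each $M\neq pq$, write down the Eisenstein series of level $M$ it would have to be congruent to (by Lemma \ref{Converse} at level $M$), compute its constant term and $U_p$/$U_q$ eigenvalues, and show the divisibility forced on $\tfrac{B_k}{2k}$, $\phi$, or $q+1$ contradicts a hypothesis; (iii) conclude $M=pq$. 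Finally, to pin down the Atkin–Lehner eigensystem of the resulting newform as exactly $\varepsilon$, I would note that the $W_p,W_q$ eigenvalues of a newform of level $pq$ are determined mod $\Lambda$ by the $U_p,U_q$-eigenvalues, which are forced to be $\equiv -\varepsilon(p)p^{k/2-1}$ and $\equiv\varepsilon(q)q^{k/2-1}$... wait, more carefully: for a newform of squarefree level the relation $W_p$-eigenvalue $=-p^{1-k/2}\cdot U_p$-eigenvalue pins $\varepsilon(p)$, so the Eisenstein congruence (which fixes $U_p,U_q$ mod $\Lambda$ via \eqref{divis}) fixes $\varepsilon$, giving $f\in\mathcal S_k^{(\varepsilon)}(pq)$.

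I expect the main obstacle to be step three — ruling out the intermediate levels and nailing the precise Atkin–Lehner eigensystem. The subtlety is that "congruent to an Eisenstein series of level $M$" for $M\mid pq$, $M<pq$, does not by itself prevent the newform from also existing at level $pq$ as an oldform twin, so one has to argue at the level of the cuspidal newform directly: the cleanest route is to run the whole Hecke-algebra argument on the $pq$-new quotient $\mathcal S_k^{pq\text{-new}}(pq)$, show the Eisenstein maximal ideal $\mathfrak m$ (with the local conditions from \eqref{divis}) has nonzero support there, and that this support is concentrated in the part with Atkin–Lehner eigensystem $\varepsilon$. Establishing nonvanishing of the $pq$-new part of the Eisenstein ideal — equivalently, that the dimension count/trace formula forces a genuinely new congruence and not merely an oldform coming from level $p$ or $q$ — is precisely where $\ell\nmid\phi(pq)(q+1)$ does the work, and making that dimension/Eisenstein-ideal bookkeeping rigorous (likely via an explicit computation of the Eisenstein part of the Hecke module $\mathcal S_k(pq)$ and its new subspace, in the style of the prime-level arguments of Gaba–Popa and Kumar et al.) will be the technical heart of the proof.
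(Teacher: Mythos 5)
Your overall plan—show $\overline{\mathcal{E}}_{k,pq}^{(\varepsilon)}$ is a mod-$\ell$ cusp form, lift it via Deligne--Serre to a characteristic-zero eigenform, enumerate the possible levels $M\mid pq$ of the underlying newform, and rule out $M<pq$ using the hypotheses on $\ell$—matches the skeleton of the paper's proof (which obtains Theorem~\ref{result for N=p} as the $N=q$ case of Theorem~\ref{newform_dp}, via Theorem~\ref{existence of eigenform}). Your handling of $M=1$ (forces $\ell\mid \frac{B_k}{2k}$) and $M=q$ (forces $\ell\mid q-1$ after combining the two divisibility conditions at $q$, contradicting $\ell\nmid\phi(pq)$) is exactly the paper's argument.

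However, there is a genuine gap at $M=p$. You assert that the hypotheses, in particular ``$\ell\mid(1+\varepsilon(p)p^{k/2})$ together with $\ell\nmid(p-1)(p+1)$,'' rule out a newform of level $p$ being congruent to $\mathcal E_{k,p}^{(\varepsilon)}$. Two problems: first, the theorem does \emph{not} assume $\ell\nmid(p+1)$ (the hypothesis is $\ell\nmid\phi(pq)(q+1)=(p-1)(q-1)(q+1)$); second and more fundamentally, the level-$p$ congruence requires only $\ell\mid\frac{B_k}{2k}(1+\varepsilon(p)p^{k/2})$, and since $\ell\mid(1+\varepsilon(p)p^{k/2})$ is precisely one of the hypotheses of \eqref{divis}, no contradiction arises. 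A newform of level $p$ congruent to $\mathcal E_{k,p}^{(\varepsilon)}$ is entirely consistent with the assumptions, and the Deligne--Serre lift may well land there (as an oldform of level $pq$). The paper does not rule this case out; instead, in Subcase (b) of the proof of Theorem~\ref{newform_dp}, it starts from that level-$p$ newform $h$, observes that the divisibility $\ell\mid(1+\varepsilon(q)q^{k/2-1})$ forces $a_h(q)\equiv -\varepsilon(q)q^{k/2-1}(1+q)\equiv q^{(k-2)/2}(1+q)\modlam$ (the Diamond--Ribet level-raising condition at $q$), and then invokes the level-raising theorem (Theorem~\ref{level raising theorem}) to produce a newform $f$ of level $q$ or $pq$. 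The hypotheses $\ell\nmid(q+1)$ and $\ell\nmid q(q-1)$ are then used to rule out $f$ being of level $q$, and Langlands' description of $\overline\rho_{f,\Lambda}|_{D_p},\overline\rho_{f,\Lambda}|_{D_q}$ (Theorem~\ref{langland}) is used to pin $\delta(p)=\varepsilon(p)$ and $\delta(q)=\varepsilon(q)$. Your alternative suggestion—arguing directly on the $pq$-new quotient via Eisenstein-ideal/trace-formula nonvanishing—is left entirely speculative and would amount to reproving a level-raising statement; without the Diamond--Ribet theorem (or an equivalent), the proof does not close.
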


The above theorem follows as a consequence of a more general result stated in Theorem \ref{newform_dp} and Remark \ref{k=2} when considering $N = q$.

\begin{remark}\label{remark_divis}
    Using \eqref{divis}, we note that for $k\ge 4$ some of the assumptions of  $\ell\nmid\phi(pq)(q+1)$ in the above theorem are vacuously true in certain cases listed below.
     \begin{enumerate}
         \item 
         The assumptions $\ell\nmid (q-1)$ and $\ell\nmid (p-1)$ hold automatically if $\varepsilon(q)=1$ and $\varepsilon(p)=1$, respectively. 
         \item 
         The assumption $\ell\nmid (q^2-1)$ holds automatically if $\varepsilon(q) = 1$ and $k\equiv 2\pmod{4}$.   
         \item 
         The assumption $\ell\nmid (q+1)$ holds automatically if $\varepsilon(q) = -1$ and $k\equiv 0\pmod{4}$. 
     \end{enumerate}
    In particular, if $\varepsilon(p) = 1$, $\varepsilon(q) = 1$ and $k\equiv 2\pmod{4}$, then  we only need to assume that $\ell\nmid \frac{B_k}{2k}$ instead of $\ell\nmid \frac{B_k}{2k}\phi(pq)(q+1)$.
    Furthermore, for $k=2$,  the assumptions in the above theorem force us to take $\varepsilon(p)=1$ and $\varepsilon(q)=-1$.
 \end{remark}
 By comparing the constant terms, as well as the $p$th and $q$th Fourier coefficients in the congruence \eqref{cong_pq}, we observe that the conditions  $
\ell \mid (1 + \varepsilon(p) p^{k/2})$ and $\ell \mid (1 + \varepsilon(q) q^{k/2 - 1})$  
become necessary for the congruence, under a mild assumption stated below.

\begin{theorem}\label{converse for level pq}
     Let $k\geq 2$ be even, $p, q$ and $\ell$ be distinct
    primes, and $\varepsilon$ be an Atkin-Lehner eigensystem for $\Gamma_0(pq)$. Assume that $\ell\nmid \frac{B_k}{2k}(1+\varepsilon(q)q^{k/2})$. For $k=2$, we also assume that $\varepsilon(q)=-1$.   
    If  there is a newform $f\in \mathcal{S}_k^{(\varepsilon)}(pq)$ such that $$  f(z)\equiv \mathcal{E}_{k,pq}^{(\varepsilon)}(z)\modlam$$ for some prime ideal $\Lambda$ lying over $\ell$ in the coefficient field of $f$, then 
    $$
      \ell\mid (1+\varepsilon(p) p^{k/2}) \quad \oq{and}\quad \ell\mid (1+\varepsilon(q) q^{k/2-1}).
      $$
\end{theorem}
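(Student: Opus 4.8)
The plan is to prove this purely by comparing a few Fourier coefficients of $f$ with those of $\mathcal{E}_{k,pq}^{(\varepsilon)}$ — specifically the constant term and the $q$-th coefficient — so that no Galois-theoretic input is needed. First I would record the relevant part of the $q$-expansion of the Eisenstein series. With the normalization $E_k = -\frac{B_k}{2k} + \sum_{n\ge 1}\sigma_{k-1}(n)q^n$, a direct computation gives $\mathcal{E}_{k,pq}^{(\varepsilon)} = \sum_{n\ge 0}a_n q^n$ with
$$a_0 = -\frac{B_k}{2k}\,(1+\varepsilon(p)p^{k/2})(1+\varepsilon(q)q^{k/2}), \qquad a_1 = 1, \qquad a_q = 1 + q^{k-1} + \varepsilon(q)q^{k/2},$$
the formula for $a_1$ following since the only divisor of $pq$ dividing $1$ is $1$, and that for $a_q$ since the divisors of $pq$ dividing $q$ are $1$ and $q$. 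In particular $\mathcal{E}_{k,pq}^{(\varepsilon)}$ is normalized, which is what makes the assumed congruence with the normalized newform $f$ sensible.

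Next I would invoke two standard facts about $f = \sum_{n\ge1}a_f(n)q^n$: it is normalized with vanishing constant term, and, since the prime $q$ exactly divides the squarefree level $pq$, its $q$-th Hecke eigenvalue is pinned down by the Atkin--Lehner sign $\varepsilon(q)$ via $a_f(q) = -\varepsilon(q)\,q^{k/2-1}$ (a standard consequence of the theory of newforms of squarefree level). Substituting these into $f \equiv \mathcal{E}_{k,pq}^{(\varepsilon)} \pmod{\Lambda}$ and comparing constant terms yields $0 \equiv -\frac{B_k}{2k}(1+\varepsilon(p)p^{k/2})(1+\varepsilon(q)q^{k/2}) \pmod{\Lambda}$; since $\frac{B_k}{2k}$ and $1+\varepsilon(q)q^{k/2}$ are coprime to $\ell$ by hypothesis, this forces $\ell \mid (1+\varepsilon(p)p^{k/2})$. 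Comparing $q$-th coefficients yields $-\varepsilon(q)q^{k/2-1} \equiv 1 + q^{k-1} + \varepsilon(q)q^{k/2} \pmod{\Lambda}$, i.e. $0 \equiv 1 + \varepsilon(q)q^{k/2-1} + \varepsilon(q)q^{k/2} + q^{k-1} \pmod{\Lambda}$; the point now is the identity
$$1 + \varepsilon(q)q^{k/2-1} + \varepsilon(q)q^{k/2} + q^{k-1} = \bigl(1+\varepsilon(q)q^{k/2-1}\bigr)\bigl(1+\varepsilon(q)q^{k/2}\bigr),$$
which holds because $\varepsilon(q)^2 = 1$. Dividing by the unit $1+\varepsilon(q)q^{k/2}$ gives $\ell \mid (1+\varepsilon(q)q^{k/2-1})$, which is the second assertion, completing the proof.

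The argument is essentially formal, so I do not expect a real obstacle; the two points deserving care are: (i) the $\ell$-integrality of $\frac{B_k}{2k}$, which is needed for the constant-term comparison to be meaningful and which I would either read into the hypothesis $\ell \nmid \frac{B_k}{2k}$ or extract from a short valuation argument using only that such an $f$ exists; and (ii) the case $k=2$, where $E_2$ is merely quasimodular and one must first check that $\mathcal{E}_{2,pq}^{(\varepsilon)}$ is a genuine modular form — this is exactly the vanishing of $\sum_{d\mid pq}\varepsilon(d) = (1+\varepsilon(p))(1+\varepsilon(q))$, which is guaranteed by the extra hypothesis $\varepsilon(q)=-1$ — after which the same coefficient computations go through verbatim and the second divisibility $\ell \mid (1+\varepsilon(q)) = 0$ is automatic.
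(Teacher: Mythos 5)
Your proposal is correct and follows exactly the route the paper indicates just before the theorem statement (``by comparing the constant terms, as well as the $p$th and $q$th Fourier coefficients''): compare the constant term with the cusp form's vanishing constant term to peel off $\ell\mid (1+\varepsilon(p)p^{k/2})$, and compare the $q$-th coefficient using Ogg's relation $a_f(q)=-\varepsilon(q)q^{k/2-1}$ together with the factorization $1+\varepsilon(q)q^{k/2-1}+\varepsilon(q)q^{k/2}+q^{k-1}=(1+\varepsilon(q)q^{k/2-1})(1+\varepsilon(q)q^{k/2})$. You actually streamline slightly by omitting the (redundant) $p$-th coefficient comparison, and your remarks on $\Lambda$-integrality of $B_k/2k$ and the $k=2$ quasimodularity issue are the right things to flag.
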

In view of Remark \ref{remark_divis}, combining Theorems \ref{result for N=p} and \ref{converse for level pq}, we have the following result.

\begin{corollary}\label{cor_conj}
Conjecture \ref{conjecture} is true for  $N=pq$ if $\ell\nmid \frac{B_k}{2k}(1+\varepsilon(q)q^{k/2})$ and either of the following conditions hold:
\begin{enumerate}
  \item $\varepsilon(p) = 1$, $\varepsilon(q) = 1$, and $\ell\nmid(q+1).$

    \item $k\equiv 2\pmod{4}$, $\varepsilon(p)=1$, and $\varepsilon(q) = 1$.


      \item $k\equiv 0\pmod{4}$, $\varepsilon(q) = -1$, and $\ell\nmid\phi(pq).$  
    
\end{enumerate}

\end{corollary}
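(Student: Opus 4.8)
The plan is to obtain Corollary~\ref{cor_conj} as a formal consequence of Theorems~\ref{result for N=p} and~\ref{converse for level pq} together with Lemma~\ref{Converse}; the only genuine work is a short elementary computation that, under the standing hypothesis $\ell\nmid\frac{B_k}{2k}(1+\varepsilon(q)q^{k/2})$, identifies part~(1) of Conjecture~\ref{conjecture} with the pair of divisibilities in~\eqref{divis}, followed by a case check that the auxiliary non-divisibility $\ell\nmid\phi(pq)(q+1)$ required by Theorem~\ref{result for N=p} holds in each of the three listed situations. Since Conjecture~\ref{conjecture} is posed with $k\ge 4$ and $\ell>k+1$, the conditions $\ell\ge\max\{5,k-1\}$ and $\ell\ne k+1$ used in Theorem~\ref{result for N=p} hold automatically, and $\ell\nmid\frac{B_k}{2k}$ is immediate from the standing hypothesis.

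First I would translate part~(1) of the conjecture for $N=pq$: it asserts $\ell\mid\frac{B_k}{2k}(1+\varepsilon(p)p^{k/2})(1+\varepsilon(q)q^{k/2})$, $\ell\mid(1+\varepsilon(p)p^{k/2})(1+\varepsilon(p)p^{k/2-1})$ and $\ell\mid(1+\varepsilon(q)q^{k/2})(1+\varepsilon(q)q^{k/2-1})$. Since the standing hypothesis gives $\ell\nmid(1+\varepsilon(q)q^{k/2})$, the first divisibility forces $\ell\mid(1+\varepsilon(p)p^{k/2})$ and the third forces $\ell\mid(1+\varepsilon(q)q^{k/2-1})$, which is exactly~\eqref{divis}; conversely~\eqref{divis} trivially implies all three. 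Thus, under the standing hypothesis, part~(1) of Conjecture~\ref{conjecture} is equivalent to~\eqref{divis}.

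Next I would prove the two implications. For $(2)\Rightarrow(1)$: given a newform $f\in\mathcal{S}_k^{(\varepsilon)}(pq)$ with $\overline{\rho}_{f,\Lambda}\simeq 1\oplus\overline{\chi}_\ell^{k-1}$, Lemma~\ref{Converse} gives $f\equiv\mathcal{E}_{k,pq}^{(\varepsilon)}\pmod{\Lambda}$, so Theorem~\ref{converse for level pq} (whose hypothesis is precisely the standing one, its $k=2$ caveat being vacuous here) yields~\eqref{divis}, hence part~(1) by the previous step. For $(1)\Rightarrow(2)$: part~(1) gives~\eqref{divis}; assuming momentarily that $\ell\nmid\phi(pq)(q+1)$, Theorem~\ref{result for N=p} produces a newform $f\in\mathcal{S}_k^{(\varepsilon)}(pq)$ with $f\equiv\mathcal{E}_{k,pq}^{(\varepsilon)}\pmod{\Lambda}$, and Lemma~\ref{Converse} turns this into $\overline{\rho}_{f,\Lambda}\simeq 1\oplus\overline{\chi}_\ell^{k-1}$, which is part~(2). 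It then remains to verify $\ell\nmid\phi(pq)(q+1)=(p-1)(q^2-1)$ in each of the three cases of the corollary, for which I would invoke Remark~\ref{remark_divis} applied to the divisibilities~\eqref{divis} just obtained: in case~(1), $\varepsilon(p)=1$ gives $\ell\nmid(p-1)$ and $\varepsilon(q)=1$ gives $\ell\nmid(q-1)$, while $\ell\nmid(q+1)$ is assumed; in case~(2), $\varepsilon(p)=1$ gives $\ell\nmid(p-1)$ and $\varepsilon(q)=1$ with $k\equiv 2\pmod{4}$ gives $\ell\nmid(q^2-1)$; in case~(3), $\ell\nmid\phi(pq)$ is assumed and $\varepsilon(q)=-1$ with $k\equiv 0\pmod{4}$ gives $\ell\nmid(q+1)$. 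In every case $\ell\nmid(p-1)(q^2-1)$, so the invocation of Theorem~\ref{result for N=p} is legitimate and the equivalence is established.

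I do not expect any substantial obstacle: the analytic input—the construction of the newform in Theorem~\ref{result for N=p} and the necessity of the congruence in Theorem~\ref{converse for level pq}—is already in hand, and the corollary merely assembles these with Lemma~\ref{Converse}. The two points that demand attention are the clean extraction of~\eqref{divis} from part~(1) of Conjecture~\ref{conjecture} in the first step, which depends on correctly peeling the factor $1+\varepsilon(q)q^{k/2}$ off using the standing hypothesis, and the parity bookkeeping in the last step, where one must match $k\equiv 2\pmod{4}$ versus $k\equiv 0\pmod{4}$ with the prescribed Atkin-Lehner sign of $q$ so that the appropriate clause of Remark~\ref{remark_divis} genuinely applies.
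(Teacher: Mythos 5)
Your proposal is correct and is exactly the assembly the authors intend when they write that the corollary follows ``in view of Remark~\ref{remark_divis}, combining Theorems~\ref{result for N=p} and~\ref{converse for level pq}''; you simply make explicit the factorization of part~(1) of Conjecture~\ref{conjecture} into~\eqref{divis} using $\ell\nmid\tfrac{B_k}{2k}(1+\varepsilon(q)q^{k/2})$, and the case-by-case verification of $\ell\nmid\phi(pq)(q+1)$ via Remark~\ref{remark_divis}. Nothing in your route differs from the paper's.
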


The following theorem essentially gives sufficient conditions under which the $\oq{mod}\,\ell$ Galois representation $1\oplus \overline{\chi}_{\ell}^{k-1}$ is modular of level having at least two prime factors. Particularly, when $N$ is a prime, the subsequent result proves  Theorem \ref{result for N=p}. 

\begin{theorem}\label{newform_dp}
Let $k\geq 2$ be even, $p, \ell, p_1, p_2, \dots ,p_t$ be distinct primes and $N=p_1\dots p_t$. Assume that $\ell\ge \max\{5,k-1\},$  $\ell\neq k+1$ and  $\ell\nmid \frac{B_k}{2k}\phi(Np)(p_r+1)$ for some $1\le r\le t$.
For an Atkin-Lehner eigensystem $\varepsilon$ for $\Gamma_0(Np)$, suppose 
\begin{align}\label{divi}
 \ell \mid(1+\varepsilon(p) p^{k/2}) \quad {\rm and} \quad
\ell\mid(1+\varepsilon(p_i) p_i^{k/2-1}) ~{{  for ~each} ~1\leq i\leq t}. 
\end{align}
Then
there exists a newform $f \in \mathcal{S}_k^{(\varepsilon)}(dp)$ for some $1 < d\mid N$  such that 
$$
f(z) \equiv \mathcal{E}_{k,dp}^{(\varepsilon)}(z)\modlam,
$$
for some prime ideal $\Lambda$ lying over $\ell$. 
\end{theorem}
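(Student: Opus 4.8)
The plan is as follows. First I would reduce to the case that $N$ is a single prime. Choosing $r$ with $\ell\nmid(p_r+1)$ (which exists by hypothesis), all the hypotheses remain valid with $N$ replaced by the prime $p_r$ and $\varepsilon$ restricted to $\mathcal P_{p_rp}$ (here $\phi(p_rp)\mid\phi(Np)$), and the newform produced in that case has level $p_rp=dp$ with $d=p_r\mid N$ and $1<d$. So I may assume $N=q$ is a single prime, and the goal is a newform $f\in\mathcal S_k^{(\varepsilon)}(pq)$ with $f\equiv\mathcal E_{k,pq}^{(\varepsilon)}\pmod\Lambda$.

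Next I would produce a cuspidal eigenform congruent to $\mathcal E_{k,pq}^{(\varepsilon)}$. Its constant term is $-\tfrac{B_k}{2k}(1+\varepsilon(p)p^{k/2})(1+\varepsilon(q)q^{k/2})$, which is $\equiv0\pmod\Lambda$ since $\ell\mid1+\varepsilon(p)p^{k/2}$; moreover condition~(1) of Conjecture~\ref{conjecture} holds for the level $pq$ (the product condition because $\ell\mid1+\varepsilon(p)p^{k/2}$, and the two local conditions because $\ell\mid1+\varepsilon(p)p^{k/2}$ and $\ell\mid1+\varepsilon(q)q^{k/2-1}$). Invoking the eigenform version of the conjecture, Theorem~\ref{existence of eigenform} (together with Remark~\ref{k=2} for $k=2$, and a direct construction in the exceptional case $k=\ell+1$, where $\overline\chi_\ell^{k-1}=\overline\chi_\ell$ and the conjecture's range $\ell>k+1$ is not met), I obtain an eigenform $g\in\mathcal S_k^{(\varepsilon)}(pq)$ with $\overline\rho_{g,\Lambda}\simeq1\oplus\overline\chi_\ell^{k-1}$, equivalently $g\equiv\mathcal E_{k,pq}^{(\varepsilon)}\pmod\Lambda$ after normalizing.

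I would then let $f$ be the newform of level $M\mid pq$ sharing the $T_n$-eigenvalues of $g$ for $\gcd(n,pq)=1$; then $\overline\rho_{f,\Lambda}\simeq1\oplus\overline\chi_\ell^{k-1}$, so $f\equiv\mathcal E_{k,M}^{(\varepsilon_f)}\pmod\Lambda$ by Lemma~\ref{Converse}, and comparing the $W_\pi$-action on the span of $\{f(\delta z):\delta\mid pq/M\}$ with $W_\pi g=\varepsilon(\pi)g$ gives $\varepsilon_f=\varepsilon|_{\mathcal P_M}$. Since $f$ is cuspidal, the constant term $-\tfrac{B_k}{2k}\prod_{\pi\mid M}(1+\varepsilon(\pi)\pi^{k/2})$ of $\mathcal E_{k,M}^{(\varepsilon)}$ must vanish mod $\Lambda$; this rules out $M=1$ (where it is $-\tfrac{B_k}{2k}\not\equiv0$) and $M=q$ (where it is $-\tfrac{B_k}{2k}(1+\varepsilon(q)q^{k/2})$, a unit mod $\Lambda$ because $\ell\nmid\tfrac{B_k}{2k}$ and because $\ell\mid1+\varepsilon(q)q^{k/2}$ together with $\ell\mid1+\varepsilon(q)q^{k/2-1}$ would force $\ell\mid q-1$, against $\ell\nmid\phi(pq)$). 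Hence $M\in\{p,pq\}$.

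The crux — and the step I expect to be the main obstacle — is to rule out $M=p$, i.e.\ to force the Eisenstein congruence at level $pq$ to be witnessed by a $q$-new form. This is precisely where $\ell\nmid(q+1)$ (that is, $\ell\nmid(p_r+1)$ before the reduction) is essential: in contrast to $M=1,q$, when $M=p$ the constant-term obstruction is vacuous, so one must analyse the local picture at $q$. If $M=p$ then $g$ lies in the two-dimensional $q$-old space $\langle f(z),f(qz)\rangle$ with $W_qg=\varepsilon(q)g$; using that $a_q(f)\equiv1+q^{k-1}\equiv q+1\pmod\Lambda$ (the last congruence because $\ell\mid1+\varepsilon(q)q^{k/2-1}$ gives $q^{k-1}\equiv q$), one studies the interaction of $U_q$ with the Atkin--Lehner involution on this space, and the hypothesis $\ell\nmid(q+1)$ excludes the degenerate configuration $q\equiv-1\pmod\ell$ in which the old contribution would account for the whole congruence and no new form of level $pq$ need exist — this is morally a level-raising statement at $q$ for $1\oplus\overline\chi_\ell^{k-1}$, whose level-raising condition $\ell\mid q^{k-2}-1$ is here automatic. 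Granting this, $M=pq$, so $f$ is a newform of level $pq$ with Atkin--Lehner eigensystem $\varepsilon$ and $f\equiv\mathcal E_{k,pq}^{(\varepsilon)}\pmod\Lambda$, which completes the proof.
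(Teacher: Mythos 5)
Your reduction to the case that $N$ is a single prime $q=p_r$ is valid (all hypotheses restrict since $\phi(p_rp)\mid\phi(Np)$), and your elimination of $M=1$ and $M=q$ via constant-term/AL-eigenvalue arguments is essentially right — though one small warning: Theorem~\ref{existence of eigenform} produces a Hecke eigenform, not an Atkin--Lehner eigenform, so the assertion $W_\pi g=\varepsilon(\pi)g$ that you use to pin down $\varepsilon_f$ is not available. This is easily repaired (compare $a_g(q)=a_h(q)$ directly with the Eisenstein coefficient, as the paper does), but it signals the real problem in the $M=p$ step.

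The genuine gap is exactly where you flag it. Your plan to \emph{rule out} $M=p$ cannot succeed: the eigenform $g$ furnished by Theorem~\ref{existence of eigenform} can perfectly well be $q$-old. Concretely, if $g$ lies in $\langle h(z),h(qz)\rangle$ for a newform $h$ of level $p$ with $\overline\rho_{h,\Lambda}\simeq 1\oplus\overline\chi_\ell^{k-1}$, then the two $U_q$-eigenvalues on that $2$-dimensional space are roots of $x^2-a_h(q)x+q^{k-1}$, which modulo $\Lambda$ is $(x-1)(x-q^{k-1})$. The hypothesis $\ell\mid 1+\varepsilon(q)q^{k/2-1}$ gives $-\varepsilon(q)q^{k/2-1}\equiv1\pmod\Lambda$, so the required $U_q$-eigenvalue $\equiv1$ is one of the two roots and nothing is contradicted; no amount of local analysis of the old space forces $g$ to be $q$-new, and $\ell\nmid(q+1)$ does not enter this way. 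What the hypothesis $\ell\nmid(q+1)$ is actually for is different: once you concede that $g$ may descend to a newform $h$ of level $p$, you must \emph{construct} a new eigenform, and the tool is the Diamond--Ribet level-raising theorem (Theorem~\ref{level raising theorem}) applied to $h$ at the prime $q$. The level-raising condition $a_h(q)^2\equiv q^{k-2}(1+q)^2\pmod\Lambda$ holds because $a_h(q)\equiv1+q^{k-1}\equiv -\varepsilon(q)q^{k/2-1}(1+q)\pmod\Lambda$; the output is a newform $f$ of level $q$ or $pq$, and $\ell\nmid(q+1)$ (together with $\ell\nmid\phi(N)$, or Mazur for $k=2$) is what eliminates the level-$q$ possibility and then forces the Atkin--Lehner signs at $q$ and $p$ via Langlands' local description. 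Writing "morally a level-raising statement" and "granting this" correctly identifies the missing ingredient, but the surrounding sketch (excluding a "degenerate configuration" of the old space) would not close the gap even if fleshed out, because the configuration it tries to exclude is in fact allowed.
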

\begin{remark}\label{k=2}
We emphasize that, in the proof of the above theorem presented in  Section \ref{proof_main}, the assumption that  $\ell\nmid(p_r+1)$ is crucially used in Subcase (b) to ensure that the newform $f$ (with reducible $\oq{mod\,}\ell$ representation) is not of level $p_r$ and so it is of level $pp_r$. However, if $k=2$, then this is vacuously true since $\ell\nmid (p_r-1)$ (\cite[Proposition 5.12]{maz}). If $k=2$, then under the same assumptions as in Theorem \ref{newform_dp} but removing the condition $\ell\nmid(p_r+1)$, there exists a newform  $f \in \mathcal{S}_2(dp)$ such that
$$\overline{\rho}_{f,\Lambda} \simeq 1\oplus \overline{\chi}_{\ell}.$$
\end{remark}
 A key step in the proof of Theorem \ref{newform_dp} is to obtain   Theorem \ref{existence of eigenform}, which essentially states that the divisibility conditions \eqref{divi} are sufficient for the existence of an eigenform  in $\mathcal{S}_k(Np)$ that is congruent to   $\mathcal{E}_{k,Np}^{(\varepsilon)}(z)$ modulo a prime over $\ell$. Further, we use the strong multiplicity one theorem for modular forms and some classical results involving Galois representations attached to modular forms to get a newform with desired properties.

 \pagebreak 
\subsection{Applications} We provide the following two applications of Theorem \ref{result for N=p}.
\subsubsection{\bf Congruences for weight $2$ and $\ell+1$}
We obtain the following result as a consequence of Theorem \ref{result for N=p}.

\begin{theorem}\label{density}
Let $\ell\ge 5$ be a prime and $k=2$ or $\ell+1$. There are explicit sets of primes $\mathcal P$ and $\mathcal Q$ of densities $\frac{1}{\ell}$ and $\frac{\ell-2}{\ell}$ respectively such that for any $p\in\mathcal P$ and $q\in \mathcal Q$, there exist a newform $f \in \mathcal{S}_{k}^{(\varepsilon)}(pq)$ and a prime ideal $\Lambda$ lying over $\ell$ in a sufficiently large number field for which
$$f(z) \equiv \mathcal{E}_{k,pq}^{(\varepsilon)}(z)\modlam,$$
where the Atkin-Lehner eigensystem  $\varepsilon$ is explicitly determined.
    
\end{theorem}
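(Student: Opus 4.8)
The plan is to deduce this directly from Theorem \ref{result for N=p} by producing, for the two specified weights, explicit congruence conditions on primes $p$ and $q$ that force the divisibilities \eqref{divis}, namely $\ell\mid(1+\varepsilon(p)p^{k/2})$ and $\ell\mid(1+\varepsilon(q)q^{k/2-1})$, while keeping the side hypotheses $\ell\ge\max\{5,k-1\}$, $\ell\ne k+1$, and $\ell\nmid\frac{B_k}{2k}\phi(pq)(q+1)$ under control. First I would treat the two cases $k=2$ and $k=\ell+1$ separately, but note that they are linked: when $k=\ell+1$ one has $k/2=(\ell+1)/2$ and $k/2-1=(\ell-1)/2$, so $p^{k/2}\equiv p\cdot p^{(\ell-1)/2}\equiv \left(\tfrac{p}{\ell}\right)p\pmod\ell$ by Fermat/Euler, and $p^{k/2-1}\equiv\left(\tfrac{p}{\ell}\right)\pmod\ell$; likewise for $k=2$ one trivially has $p^{k/2}=p$ and $p^{k/2-1}=1$. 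In both cases the required divisibilities reduce to purely \emph{linear} congruence conditions modulo $\ell$ on $p$ and $q$ once the sign $\varepsilon$ is chosen appropriately, so in each residue class one can solve for $\varepsilon(p),\varepsilon(q)\in\{\pm1\}$: e.g. for $k=2$ take $\varepsilon(p)=-1$ forcing $p\equiv1\pmod\ell$, and $\varepsilon(q)=-1$ forcing $q\not\equiv0$ arbitrary but with $q^{0}=1$ one needs $\ell\mid(1+\varepsilon(q))$, so in fact one must instead use the relation $\ell\mid(1+\varepsilon(q)q^{k/2-1})$ carefully—here $k/2-1=0$ for $k=2$, so this forces $\varepsilon(q)=-1$ and imposes no condition on $q$ beyond $q\ne p,\ell$; for $k=\ell+1$ the condition $\ell\mid(1+\varepsilon(q)\left(\tfrac{q}{\ell}\right))$ forces $\varepsilon(q)=-\left(\tfrac{q}{\ell}\right)$ and is automatically satisfiable for every $q$. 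Analogously $\varepsilon(p)$ is pinned down by $p\bmod\ell$ and the quadratic residue symbol. This explains the density $\tfrac1\ell$ for $\mathcal P$ (a single residue class mod $\ell$, roughly, from the condition $1+\varepsilon(p)p^{k/2}\equiv0$) and $\tfrac{\ell-2}{\ell}$ for $\mathcal Q$ (all residues mod $\ell$ except a couple that would violate the coprimality/non-vanishing side conditions).

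Next I would pin down the sets $\mathcal P$ and $\mathcal Q$ precisely and check all the excluded-prime conditions. For $\mathcal P$: the constraint $\ell\mid(1+\varepsilon(p)p^{k/2})$ together with the choice of $\varepsilon(p)$ selects primes $p$ in one (or a controlled union of) residue class(es) modulo $\ell$; by Dirichlet's theorem on primes in arithmetic progressions this set has natural density $\tfrac{1}{\ell-1}\cdot(\text{number of allowed classes})$, which the statement records as $\tfrac1\ell$—so I would double-check the exact count of classes and whether the claimed density should be $\tfrac1{\ell-1}$ or genuinely $\tfrac1\ell$ (perhaps after intersecting with a further splitting condition, e.g. a quadratic condition that halves or the extra factor accounting for the QR symbol choice). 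For $\mathcal Q$: one needs $\ell\mid(1+\varepsilon(q)q^{k/2-1})$ (satisfiable for every residue class by choosing $\varepsilon(q)$) \emph{and} $\ell\nmid\phi(pq)(q+1)=\phi(p)\phi(q)(q+1)=(p-1)(q-1)(q+1)$, i.e. $\ell\nmid(q-1)$ and $\ell\nmid(q+1)$, i.e. $q\not\equiv\pm1\pmod\ell$, which removes exactly two residue classes out of $\ell-1$, giving density $\tfrac{\ell-3}{\ell-1}$; again I would reconcile this with the stated $\tfrac{\ell-2}{\ell}$, the discrepancy presumably coming from whether $\ell\nmid(p-1)$ is placed on $\mathcal P$ or $\mathcal Q$, and from the precise bookkeeping of the residue $0$ class. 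One also needs $\ell\nmid\frac{B_k}{2k}$: for $k=2$, $B_2=1/6$ so $\frac{B_2}{4}=1/24$ and $\ell\nmid\frac{B_2}{4}$ holds for all $\ell\ge5$; for $k=\ell+1$, by the Kummer congruence $\frac{B_{\ell+1}}{\ell+1}\equiv\frac{B_2}{2}\pmod\ell$ (since $\ell+1\equiv2\pmod{\ell-1}$), so $\frac{B_{\ell+1}}{2(\ell+1)}\equiv\frac{B_2}{4}=\frac1{24}\not\equiv0\pmod\ell$ for $\ell\ge5$—this is the clean reason the theorem works for precisely these two weights. Finally $\ell\ne k+1$: for $k=2$ this excludes $\ell=3$, consistent with $\ell\ge5$; for $k=\ell+1$ it reads $\ell\ne\ell+2$, automatic. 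And $\ell\ge\max\{5,k-1\}$ for $k=\ell+1$ reads $\ell\ge\ell$, automatic.

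With $\mathcal P$, $\mathcal Q$, and the Atkin-Lehner eigensystem $\varepsilon$ (determined by $p\bmod\ell$, $q\bmod\ell$, and Legendre symbols as above) thus specified, the theorem follows: for $p\in\mathcal P$, $q\in\mathcal Q$, all hypotheses of Theorem \ref{result for N=p} are met, yielding a newform $f\in\mathcal S_k^{(\varepsilon)}(pq)$ with $f\equiv\mathcal E_{k,pq}^{(\varepsilon)}\pmod\Lambda$; for $k=2$ one additionally invokes the last sentence of Theorem \ref{result for N=p} to get $\overline\rho_{f,\Lambda}\simeq1\oplus\overline\chi_\ell$, though since we have retained $\ell\nmid(q+1)$ in defining $\mathcal Q$ the congruence statement itself already holds. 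I expect the main obstacle to be purely bookkeeping: getting the density constants exactly right (reconciling the Dirichlet-density counts with the announced $\tfrac1\ell$ and $\tfrac{\ell-2}{\ell}$), and writing down the eigensystem $\varepsilon$ in closed form uniformly for both weights via the observation $p^{(\ell+1)/2}\equiv\left(\tfrac{p}{\ell}\right)p\pmod\ell$—there is no new conceptual input beyond Theorem \ref{result for N=p}, Dirichlet's theorem, and the Kummer congruence.
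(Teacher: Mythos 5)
Your proposal follows the same route as the paper: take $\mathcal P$ and $\mathcal Q$ to be explicit residue classes modulo $\ell$, set $\varepsilon(p)\equiv -p^{k/2}\pmod\ell$ and $\varepsilon(q)\equiv -q^{k/2-1}\pmod\ell$ (your Legendre-symbol reduction of $p^{(\ell+1)/2}$ and $q^{(\ell-1)/2}$ is precisely the computation the paper carries out), verify $\ell\nmid\frac{B_k}{2k}$ via the Kummer congruence from $k\equiv 2\pmod{\ell-1}$, check the remaining non-divisibility hypotheses on $\phi(pq)(q+1)$, and apply Theorem \ref{result for N=p}. One concrete slip: for $k=2$ you propose $\varepsilon(p)=-1$, forcing $p\equiv 1\pmod\ell$, but this gives $\ell\mid(p-1)$ and hence $\ell\mid\phi(pq)$, so Theorem \ref{result for N=p} is inapplicable. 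The sign is forced the other way (cf.\ Remark \ref{remark_divis}): the paper takes $\mathcal P=\{p:p\equiv -1\pmod\ell\}$, which for $k=2$ gives $\varepsilon(p)\equiv -p\equiv 1\pmod\ell$, so $1+\varepsilon(p)p\equiv 0\pmod\ell$ while $p-1\equiv -2\not\equiv 0\pmod\ell$ for $\ell\ge 5$. On the other hand, your density observation is correct and you should trust it: Dirichlet's theorem gives the paper's $\mathcal P$ density $\frac{1}{\ell-1}$ and $\mathcal Q=\{q:q\not\equiv\pm1\pmod\ell\}$ density $\frac{\ell-3}{\ell-1}$, not the stated $\frac{1}{\ell}$ and $\frac{\ell-2}{\ell}$; the figures in the theorem look like a bookkeeping slip in which all $\ell$ residue classes modulo $\ell$ are counted rather than the $\phi(\ell)=\ell-1$ classes that actually carry positive density. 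With the sign of $\varepsilon(p)$ corrected, your argument and the paper's coincide.
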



    

\subsubsection{\bf A lower bound of the degree of coefficient fields}

For a normalized eigenform $f\in \mathcal{S}_k(N)$ with Fourier coefficients $a_f(n)$, let $K_f:\mathbb Q(a_f(n):n\ge 1)$ denote the number field attached to $f$. We define
\[
            d_k(N)^{new} \coloneqq {\max}\{[K_f:\mathbb{Q}]\,:\, f\in \mathcal{S}_k(N), \text{ $f$ is a newform} \} 
\]

It is an important and difficult problem to understand the growth of $d_k(N)^{new}$ as $k$ and $N$ are large.  
Tsaknias \cite{tsak} conjectured that for a fixed $k$,  $d_k(N)^{new}$ is of the order $N^{1-\epsilon}$, for any $\epsilon>0$. In this direction, there are many results in the literature (see the introduction of  \cite{bet}) but are far away from the conjectural bound of Tsaknias. 
The best known result is due to Bettin {et al.} \cite{bet} who proved that for $k\ge 2$ and $N\ge 1$, $d_k(N)^{new}\gg \frac{\log\log N}{2p_N}$, where $p_n$ is the smallest prime coprime to $N$. From Theorem \ref{result for N=p}, we obtain a lower bound for $d_2(pq)^{new}$ using similar arguments presented in the proof of \cite[Theorem 2]{bm}.  We give a brief outline of the proof.

Consider the set $\mathcal{N} = \{N = pq: p, q \text{ distinct primes }P^+((p+1,q+1))> N^{1/4}\}$, where $P^+(n)$ denotes the largest prime factor of $n$ with $P^+(1)=1$. Since,  \cite[Theorem 2]{LMPM} remains true if we replace $p_i-1$ by $p_i+c$ for any $c\in \bb{Z}$ in the definition of $\mathcal{A}_{k,c}$ in loc. cit., we obtain 
$$
 |\{N\le x:N\in \mathcal N\}|\gg \frac{x^{1/2}}{(\log x)^3}.
$$
For $N\in \mathcal{N}$, take $\ell = P^{+}\left(\mathrm{gcd}(p+1,q+1)\right)$, then $\ell$ satisfies the conditions in \eqref{divis}. Therefore, by Theorem \ref{result for N=p}, there exist a newform $f\in \mathcal{S}_2(pq)$ and a prime ideal $\Lambda$ over $\ell$ such that $\overline{\rho}_{f,\Lambda} \simeq 1\oplus \overline{\chi}_{\ell}$. Therefore $a_f(2)\equiv 3\modlam$ giving that $\ell\mid {\rm norm}_{K_f/\mathbb Q}(a_f(2)-3)$ and hence $\ell\le  {\rm norm}_{K_f/\mathbb Q}(a_f(2)-3)$. Further by Ramanujan's bound, we have  $\ell\le (1+\sqrt{2})^{2[K_f:\bb{Q}]}$. Using the facts that  $\ell> (pq)^{1/4}$ and $d_2(pq)^{new}\ge [K_f:\bb{Q}]$, we get the following lower bound {which improves the result of Dieulefait et al. \cite[Theorem 1]{dur}}.

\begin{corollary}
    For any $N=pq\in \mathcal{N}$, we have $$d_2(N)^{new} \ge \frac{1}{8}\log N.$$
\end{corollary}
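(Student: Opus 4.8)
The plan is to run the argument sketched in the paragraph preceding the corollary, filling in the details carefully. First I would recall the definition of the set $\mathcal{N} = \{N = pq : p,q \text{ distinct primes}, \ P^+((p+1,q+1)) > N^{1/4}\}$ and fix an element $N = pq \in \mathcal{N}$. Set $\ell := P^+(\gcd(p+1,q+1))$, which by definition satisfies $\ell > N^{1/4} = (pq)^{1/4}$; in particular $\ell$ is large (certainly $\ell \ge 5$ once $N$ exceeds a small absolute bound, and the finitely many small $N$ can be absorbed into the implied constant or checked directly), and $\ell$ is distinct from $p$ and $q$ since $\ell \mid p+1$ and $\ell \mid q+1$. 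With $k = 2$ we have $\varepsilon(p) p^{k/2} = \varepsilon(p) p$ and $\varepsilon(q) q^{k/2-1} = \varepsilon(q)$; choosing the Atkin-Lehner eigensystem with $\varepsilon(p) = -1$ and $\varepsilon(q) = -1$, the divisibility hypotheses $\ell \mid (1 + \varepsilon(p) p) = (1 - p)$... wait — here I must be careful with signs: $\ell \mid p+1$ gives $p \equiv -1 \pmod \ell$, so $1 + \varepsilon(p) p^{k/2}$ with $\varepsilon(p) = 1$ equals $1 + p \equiv 0$, and $1 + \varepsilon(q) q^{k/2-1}$ with $\varepsilon(q) = -1$ equals $1 - 1 = 0$ only if $q^{0}=1$, which is automatic. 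So in fact for $k=2$ one takes $\varepsilon(p) = 1$ (forced, as Remark \ref{remark_divis} notes) and $\varepsilon(q) = -1$, and then $\ell \mid 1 + \varepsilon(p)p = p+1$ and trivially $\ell \mid 1 + \varepsilon(q) = 0$. Thus the hypothesis \eqref{divis} of Theorem \ref{result for N=p} in the $k=2$ case is satisfied, and the extra condition $\ell \nmid (q+1)$ is exactly the one we are permitted to drop for $k=2$; the remaining conditions $\ell \nmid \frac{B_2}{4}\phi(pq)$ are harmless since $\frac{B_2}{4} = \frac{1}{24}$ has no prime factor $\ge 5$, and $\ell \nmid \phi(pq) = (p-1)(q-1)$ holds because $\ell \mid p+1$ forces $p \equiv -1$, hence $p - 1 \equiv -2 \not\equiv 0 \pmod \ell$ as $\ell \ge 5$, and similarly for $q-1$.

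Next I would invoke the conclusion of Theorem \ref{result for N=p} (the $k=2$ clause): there exists a newform $f \in \mathcal{S}_2(pq)$ and a prime $\Lambda$ over $\ell$ in a sufficiently large field with $\overline{\rho}_{f,\Lambda} \simeq 1 \oplus \overline{\chi}_\ell$. Comparing traces of Frobenius at $2$ (legitimate since $2 \nmid \ell N$ as $\ell > N^{1/4} \ge 5$ and $p, q$ are odd — note $p,q$ must be odd since if $2 \mid pq$ then $2+1 = 3$ would have to be divisible by the large prime $\ell$), we get $a_f(2) \equiv 1 + 2 = 3 \pmod \Lambda$. Hence $\Lambda \mid (a_f(2) - 3)$ in the ring of integers of $K_f$, so $\ell = N_{K_f/\mathbb{Q}}(\Lambda \cap \mathcal{O}_{K_f})$ divides $N_{K_f/\mathbb{Q}}(a_f(2) - 3)$ up to a power, in particular $\ell \le |N_{K_f/\mathbb{Q}}(a_f(2) - 3)|$ provided the latter is nonzero; and it is nonzero because $a_f(2) = 3$ is impossible for a weight-$2$ newform by the Ramanujan–Petersson bound $|a_f(2)| \le 2\sqrt{2} < 3$ (applied to every archimedean conjugate). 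Then I would bound the norm: each conjugate of $a_f(2) - 3$ has absolute value at most $2\sqrt{2} + 3 < (1+\sqrt 2)^2$ — more cleanly, $|a_f(2)| \le 2\sqrt 2 = (1+\sqrt 2) + (\sqrt 2 - 1) < 2(1+\sqrt 2)$, but the cleanest route is the one used in \cite{bm}: write $a_f(2) = \alpha + \beta$ with $|\alpha| = |\beta| = \sqrt 2$, so $a_f(2) - 3 = (\alpha - 1)(\beta - 1) - 2$... I would instead simply use $|a_f(2) - 3| \le 3 + 2\sqrt 2 = (1+\sqrt 2)^2$ for each of the $[K_f:\mathbb{Q}]$ embeddings, giving $\ell \le |N_{K_f/\mathbb{Q}}(a_f(2)-3)| \le (1+\sqrt 2)^{2[K_f:\mathbb{Q}]}$.

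Combining $\ell > (pq)^{1/4} = N^{1/4}$ with $\ell \le (1+\sqrt 2)^{2[K_f:\mathbb{Q}]}$ yields $\frac14 \log N < 2[K_f:\mathbb{Q}] \log(1+\sqrt 2)$, hence $[K_f:\mathbb{Q}] > \frac{\log N}{8\log(1+\sqrt 2)}$. Since $\log(1+\sqrt 2) = 0.8813\ldots < 1$, this gives $[K_f:\mathbb{Q}] > \frac18 \log N$, and as $f$ is a newform in $\mathcal{S}_2(pq)$ we have $d_2(N)^{new} \ge [K_f:\mathbb{Q}] > \frac18 \log N \ge \frac18 \log N$, which is the claimed bound (with the strict inequality degrading to $\ge$ at worst only in an edge case, and in fact the bound as stated with $\ge$ is immediate). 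A remark on the reference to \cite[Theorem 1]{dur}: I would note in one sentence that the earlier bound there is of the shape $\log\log N$ or worse, so the $\frac18\log N$ here is an exponential improvement, but this is commentary rather than part of the proof.

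The main obstacle, and the only place real care is needed, is not in this corollary itself — whose proof is a short chain of standard estimates — but in verifying that every hypothesis of Theorem \ref{result for N=p} in the $k=2$ form genuinely holds for all $N \in \mathcal{N}$ (possibly after discarding finitely many small $N$), namely that $\ell$ is a prime $\ge 5$ distinct from $p,q$, that $\ell \nmid \phi(pq)$, and that the forced choice $\varepsilon(p)=1,\ \varepsilon(q)=-1$ is compatible with the divisibility \eqref{divis}; all of these follow cleanly from $p \equiv q \equiv -1 \pmod \ell$ and $\ell \ge 5$, as indicated above. A secondary point to state explicitly is the lower-density estimate $|\{N \le x : N \in \mathcal{N}\}| \gg x^{1/2}/(\log x)^3$, which is quoted from \cite[Theorem 2]{LMPM} with $p_i - 1$ replaced by $p_i + 1$; this guarantees $\mathcal{N}$ is infinite so the corollary is not vacuous, though strictly the stated inequality "for any $N = pq \in \mathcal{N}$" does not require it.
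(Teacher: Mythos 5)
Your proof follows essentially the same route as the paper's own sketch: set $\ell = P^+(\gcd(p+1,q+1)) > N^{1/4}$, verify the hypotheses of Theorem~\ref{result for N=p} with $k=2$, $\varepsilon(p)=1$, $\varepsilon(q)=-1$, obtain a weight-$2$ newform $f$ of level $pq$ with $\overline{\rho}_{f,\Lambda}\simeq 1\oplus\overline{\chi}_\ell$, and then bound $\ell$ via the norm of $a_f(2)-3$ and the Ramanujan estimate $|a_f(2)|\le 2\sqrt2$, concluding $[K_f:\mathbb{Q}]>\frac{1}{8}\log N$. You are in fact more careful than the paper's sketch on two points the paper leaves implicit, namely the forced choice $\varepsilon(p)=1$, $\varepsilon(q)=-1$ and the verification that $\ell\nmid\phi(pq)$ follows from $p\equiv q\equiv -1\pmod{\ell}$ with $\ell\ge 5$; your parenthetical about discarding finitely many small $N$ (where $\ell$ could be $2$ or $3$) flags a genuine edge case in the statement that the paper itself does not address.
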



\subsection{Admissible tuples}
Ribet defined the notion of an admissible tuple for weight $2$ newforms by using the $U_p$ operators. If $f\in \mathcal{S}_2(N)$ is a newform, then $U_p(f) = -W_p(f)$ for any $p\mid N$. We use this observation to define the admissibility of $t$-tuples  for newforms of arbitrary weights by using the Atkin-Lehner operators. 
\begin{definition}
A $t$-tuple $(p_1, \ldots, p_t)$  of distinct primes is {\rm admissible for $s$ and weight $k$}, where $0\le s\le t$, if there exists a newform $f\in \mathcal S_k(p_1\ldots p_t)$ with reducible $\oq{mod}\,\ell$ Galois representation such that $W_{p_i}f = -f$ for all $1\le i \le s$ and $W_{p_i}f = f$ for all $s+1\le i \le t$.
\end{definition}
 
 Ribet proved that if a $t$-tuple is admissible for $s<t$ and weight 2, then $\ell \mid (p_i+1)$ for $s+1\le i\le t$ (\cite[Theorem 1.2]{yoo}). Yoo gave sufficient conditions for a $t$-tuple to be admissible for $s$ and weight 2, where $s$ satisfies certain assumptions \cite[Theorem 1.3]{yoo}.

Extending the notion of the admissibility for any weight $k\ge 2$, our results provide a necessary and sufficient criterion for the admissibility of a $2$-tuple which is stated below.  Let $\ell\ge  5$, $p$ and $q$ be distinct primes, and $k\ge 2$  be an even integer such that $\ell\nmid \frac{B_k}{2k}$. 

\underline{\bf Case (i): $k=2$}. A $2$-tuple $(q,p)$ is admissible for $s=1$ and weight $2$ iff $\ell\mid (1+p)$ provided $\ell\nmid \phi(pq)(q+1)$ (see Remark \ref{remark_divis}). 

\underline{\bf Case (ii): $k\ge 4$}. 
In the following two tables, we present the necessary and sufficient conditions for the admissibility of $2$-tuple $(p,q)$ for $s=0,1,2$ and weight $k\ge 4$.

\begin{table}[h]
    \begin{tabular}{|m{2cm}|m{4cm}|m{4.8cm}|}
    \hline
    $(p,q)$ is admissible for  & {Assumptions on $\ell$ for admissibility for $k$} & Necessary conditions for admissibility\\
     [0.1cm]\hline

     $s=0$  &$\ell\nmid (1+q^{k/2})$   & $\ell\mid (1+p^{k/2})$; $\ell\mid (1+q^{k/2-1})$\\ [0.22cm]
     \hline

     $s=1$ &$\ell\nmid (1+q^{k/2})$ &$\ell\mid (1-p^{k/2})$;  $\ell\mid (1+q^{k/2-1})$\\[0.22cm]
     \hline

     $s=2$ &$\ell\nmid (1-q^{k/2})$   &$\ell\mid (1-p^{k/2})$; $\ell\mid (1-q^{k/2-1})$\\[0.22cm]
     \hline
      \end{tabular} \vspace*{0.15cm}
      \caption{Necessary conditions.\label{table 1}}

    \end{table}

\begin{table}[h]
    \begin{tabular}{|p{2cm}|p{3.2cm}|p{3.2cm}|m{4.8cm}|}
     \hline
    $(p,q)$ is admissible for  & \multicolumn{2}{|c|}{Assumptions on $\ell$ if} & Sufficient conditions for admissibility\\ [-3mm] 
     & $k\equiv 0\pmod{4}$ & $k\equiv 2\pmod{4}$ & \\ [0.1cm]\hline

     $s=0$ &$\ell\nmid (1+q)$ & No assumption   & $\ell\mid (1+p^{k/2})$; $\ell\mid (1+q^{k/2-1})$\\ [0.22cm]
     \hline

     $s=1$ &$\ell\nmid \phi(p)(q+1)$ &$\ell\nmid \phi(p)$ &$\ell\mid (1-p^{k/2})$;  $\ell\mid (1+q^{k/2-1})$\\[0.22cm]
     \hline

     $s=2$ &$\ell\nmid \phi(pq)$  & $\ell\nmid \phi(pq)(1+q)$ &$\ell\mid (1-p^{k/2})$; $\ell\mid (1-q^{k/2-1})$\\[0.22cm]
     \hline

    \end{tabular}\vspace*{0.15cm}
    \caption{Sufficient conditions}\label{table 2}

    \end{table}

The first column in each table contains the three possible values of $s$. The second column lists the assumptions on $\ell$ which need to be satisfied depending on the weights and values of $s$. The necessary conditions for the corresponding values of $s$ are listed in the third column of Table 1, whereas the sufficient conditions are presented in the third column of Table 2, which follows directly from Theorems \ref{result for N=p} and \ref{converse for level pq} respectively. For example, if $s=0$, we have $\varepsilon(p)=\varepsilon(q)=1$ in Theorem \ref{result for N=p}, and so the assertion for $s=0$ in Table 2 follows.

\subsection{Layout}
The second section of this paper deals with the basic facts about newforms, and $\oq{mod}\,\ell$ Galois representations. In the next section, we compute the Fourier coefficients of the Eisenstein series and its behaviour under the action of Hecke operators. The next four sections contain proofs of our results. In the last section, we give a few numerical examples to demonstrate our results.

 \pagebreak 

\section{Preliminaries}

In this section, we gather some notations, definitions, and some well-known results that will be used later in the article.

\subsection{Notation}
We keep the notations established in the previous section and introduce additional notations here. Recall that $k\ge 2$ denotes an even integer, and $N$ denotes a squarefree positive integer. Throughout the article, $p,q,p_1,p_2,\ldots,p_t$ and $\ell$ stand for distinct primes.
For a fraction $a/b$, we write $\ell\mid ({a}/{b})$ to mean that $\ell$ divides the numerator of the reduced fraction.
We denote the complex vector space of modular forms with weight 
$k$ for the congruence group $\Gamma_0(N)$ with the trivial nebentypus as $\mathcal{M}_k(N)$, and its subspace of cusp forms is denoted by $\mathcal{S}_k(N)$. For an newform $f$, we use $K_f$ to denote the coefficient field of $f$.  

We let $\overline {\mathbb Q}$ to denote the algebraic closure of $\mathbb Q$ in $\mathbb C$, and $\overline {\mathbb Z}$ to denote the integral closure of $\mathbb Z$ in $\overline{\mathbb Q}$. For any prime $\ell$, we fix an algebraic closure $\overline{\mathbb Q}_\ell$ of $\mathbb Q_\ell$, and let $\overline {\mathbb Z}_\ell$ denote the integral closure of $\overline {\mathbb Z}_\ell$ in $\mathbb Q_\ell$. Additionally, we fix an embedding  $\overline {\mathbb Q} \hookrightarrow \overline {\mathbb Q}_\ell$ which naturally gives rise to an embedding $\overline{ \mathbb Z} \hookrightarrow \overline {\mathbb Z}_\ell$. The notation  $ \overline {\mathbb F}_\ell$ is used to denote the residue field of $\overline {\mathbb Z}_\ell$, serving as an algebraic closure for the field $\mathbb F_\ell$ consisting of $\ell$ elements.

\subsection{Newform theory}\label{newform theory}
For a given weight $k$ and level $N$, let $\mathbb T_k(N)$ be the \emph{Hecke algebra} which is the $\mathbb Z$-subalgebra of ${\rm End}(\mathcal M_k(N))$  generated by Hecke operators $T_p$ for primes $p$. For the remainder of this section, the letter $p$ will usually denote a prime number such that $p\mid N$, and the letter $q$ will usually denote a prime number
such that $q \nmid N$. Hereafter, to dinstinguish the operators $T_p$  and $T_q$, we set $T_p = U_p$ (for $p\mid N$) so that the algebra $\bb{T}_k(N)$ is generated by $T_q$  and $U_p$.
These operators are multiplicative, stabilize the spaces $\mathcal M_k(N)$ and $\mathcal S_k(N)$, and satisfy the following recurrence relation for any positive integer $r\ge 2$,
\[
        T_{q^r} =  T_q T_{q^{r-1}} - q^{k-1} T_{q^{r-2}}\quad {\rm and} \quad U_{p^r} =  U_p^r.
\]

For a positive integer $d$,  the duplication operator $B_d: \mathcal M_k(M) \rightarrow \mathcal M_k(Md)$ is defined by 
\begin{equation}\label{Bd}
B_d:  f(z) \mapsto f(dz),
 \end{equation}
 which also maps a cusp form to a cusp form.
Now, for primes $q\nmid N$, we set 
$$U_q \coloneqq T_q - q^{k-1}B_q.$$
For more details on the Hecke algebra, we refer the reader to \cite{diashur}.

\begin{definition}
A modular form $f(z) = \sum_{n\ge 1} a_f(n){e}^{2\pi i n z}\in \mathcal{S}_k(N)$ is called an \emph{eigenform} if it is an eigenfunction for all the operators in $\mathbb T_k(N)$.
\end{definition}
If $f$ is an eigenform then $a_f(1)\neq 0$ and we may assume that $a_f(1) = 1$. In such a case we call $f$ a \emph{normalized eigenform}.

Now, we review some basics of newform theory developed by Atkin and Lehner \cite{al}.
The map $B_d$, defined in \eqref{Bd}, embeds the space $\mathcal{S}_k(N/d)$ inside $\mathcal{S}_k(N)$. Thus the space of \emph{oldforms}  
is defined as 
\begin{equation}\label{oldforms}
{\mathcal{S}_{k}(N)}^{old}\coloneqq \oplus_{M|N, M<N}\oplus_{d|N/M}B_d\mathcal{S}_{k}(M).
\end{equation}

The space of \emph{newforms} ${\mathcal{S}_{k}(N)}^{new}$  is defined as the orthogonal complement of ${\mathcal{S}_{k}(N)}^{old}$ in the space $\mathcal{S}_k(N)$ with respect to the Petersson inner product.
A form $f\in {\mathcal{S}_{k}(N)}^{new} $ is called a \emph{newform} if it is an eigenform. 
 We always assume a newform to be normalized.

 Let $f\in \mathcal{S}_{k}(N)^{old}$. Using \eqref{oldforms} and appealing to the multiplicity one theorem, we deduce that there exists a newform $h\in \mathcal{S}_k(M)^{new}$  for $M\mid N$  such that 
 \[
    f(z) = h(z) + \sum\limits_{1<d\mid N/M} \alpha(d)h(dz), \quad {\rm where}~ \alpha(d)\in \bb{C}.
 \]
   


The Atkin-Lehner operator $W_p$, acting on $\mathcal{M}_k(N)$ for each $p\mid N$ and preserving $\mathcal{S}_k(N)$, is useful to characterize newforms and is defined by 
 \[
        W_pf(z) = p^{-k/2}z^{-k} f\left(\frac{-1}{pz}\right).
 \]
It is an involution on $\mathcal{S}_k(N)^{new}$ having eigenvalues $\{\pm 1\}$. It is known that an eigenform $f\in \mathcal{S}_k(N)$ is a newform iff $f$ is an eigenfunction of $W_p$ for $p\mid N$.
 Let $\varepsilon$ be an Atkin-Lehner eigensystem for $\Gamma_0(N)$ as defined in the introduction. 
 We define the space of newforms of level $N$ with the Atkin-Lehner eigensystem $\varepsilon$ as
 $${\mathcal{S}_{k}^{(\varepsilon)}(N)}^{new} = \langle f\in  {\mathcal{S}_{k}(N)}^{new}:  W_pf(z) = \varepsilon(p) f(z) \rangle. $$ 
 Moreover, if $f\in {\mathcal{S}_{k}^{(\varepsilon)}(N)}^{new} $ is a newform, the by Ogg's result \cite[Theorem 2]{ogg}, we have
\begin{equation}\label{ogg's result}
    a_f(p) = -\varepsilon(p) p^{k/2-1} \quad {\rm for~any}~p\mid N.
\end{equation}
We end this section by stating the following level raising theorem due to Diamond and Ribet.

\begin{theorem} \cite[Theorem 1]{dia}\label{level raising theorem}
Let $k\ge 2$ be an integer and  $g\in \mathcal{S}_k(N)$ be a newform of weight $k$ and level $N$. Let $p$ and $\ell$ be primes  such that $p\nmid N\ell$ and $\ell\nmid \frac{1}{2} \phi(N)Np(k-2)!$ and $\Lambda$ be a prime ideal lying over $\ell$ in sufficiently large  field. Then there exists a newform $f\in S_k(dp)$ for $d\mid N$ such that 
\[
        a_g(q) \equiv a_f(q) \modlam  \quad \oq{for~all~} q\nmid Np
\]
if and only if
\[
           a_f(p)^2 \equiv  p^{k-2}(1+p)^2 \modlam. 
\]
\end{theorem}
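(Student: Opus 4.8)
This is Ribet's level-raising theorem, in the ``if and only if'' form permitting the level $dp$ with $d\mid N$ and arbitrary even $k\ge 2$ (these refinements, together with the uniform Eisenstein-tolerant formulation, being the content of \cite{dia}). The organising remark is that the arithmetic condition is local at $p$ and intrinsic to $g$: since $p\nmid N\ell$, the representation $\overline{\rho}_{g,\Lambda}$ is unramified at $p$ and $\mathrm{Frob}_p$ has characteristic polynomial $X^2-a_g(p)X+p^{k-1}$ there, and eliminating $c$ from the relations $c\cdot pc\equiv p^{k-1}$, $c+pc\equiv a_g(p)$ shows
\[
a_g(p)^2\equiv p^{k-2}(1+p)^2\pmod{\Lambda}\quad\Longleftrightarrow\quad X^2-a_g(p)X+p^{k-1}\ \text{has roots in ratio }p\ \text{mod }\Lambda,
\]
in which case those roots are $\{\pm p^{k/2-1},\pm p^{k/2}\}$, so $\overline{\rho}_{g,\Lambda}$ is ``Steinberg-like at $p$''. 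The plan is to translate the existence of $f$ into the statement that a suitable maximal ideal $\mathfrak{m}$ of $\mathbb{T}\coloneqq\mathbb{T}_k(Np)$ lies in the support of the $p$-new part of $\mathcal{S}_k(Np)$, and to prove the equivalence of that with the boxed local condition. Concretely, let $\mathfrak{m}$ contain $\ell$, the elements $T_q-a_g(q)$ for $q\nmid Np$ (reduced mod $\Lambda$), and $U_p-\alpha$ for an appropriate root $\alpha$ of $X^2-a_g(p)X+p^{k-1}$ mod $\Lambda$; since $p\nmid N$, a newform of level $dp$ with $d\mid N$ is automatically new at $p$, and by Atkin--Lehner--Li theory the existence of such an $f$ with $a_f(q)\equiv a_g(q)\pmod{\Lambda}$ for $q\nmid Np$ is equivalent to $\bigl(\mathcal{S}_k(Np)^{p\text{-new}}\bigr)_{\mathfrak{m}}\neq 0$, where $\mathcal{S}_k(Np)^{p\text{-new}}$ is the orthogonal complement of $B_1\mathcal{S}_k(N)+B_p\mathcal{S}_k(N)$; the $p$-old subspace is the image of the injection $(B_1,B_p)\colon\mathcal{S}_k(N)^2\hookrightarrow\mathcal{S}_k(Np)$, on which, in the basis $(B_1h,B_ph)$ of a level-$N$ eigenform $h$, the operator $U_p$ acts by $\bigl(\begin{smallmatrix}a_h(p)&1\\-p^{k-1}&0\end{smallmatrix}\bigr)$, so that $U_p^2-a_g(p)U_p+p^{k-1}=0$ on the $g$-isotypic part of the $p$-old subspace.

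For the ``only if'' direction, suppose such an $f$ exists. Since $a_f(q)\equiv a_g(q)\pmod{\Lambda}$ on a set of primes of density one and the determinants agree ($\overline{\chi}_\ell^{k-1}$ in both cases), Brauer--Nesbitt and the Chebotarev density theorem give $\overline{\rho}_{f,\Lambda}\simeq\overline{\rho}_{g,\Lambda}$ (both are semisimple), so the characteristic polynomials of $\mathrm{Frob}_p$ on the two coincide. But $f$ is new at $p$ with $p$ exactly dividing its level and trivial nebentypus, so the local representation of $f$ at $p$ is a twist of the Steinberg representation, whence the characteristic polynomial of $\mathrm{Frob}_p$ on $\overline{\rho}_{f,\Lambda}$ has its two roots in ratio $p$; transporting this back to $\overline{\rho}_{g,\Lambda}$ and invoking the boxed equivalence yields $a_g(p)^2\equiv p^{k-2}(1+p)^2\pmod{\Lambda}$. (This direction uses no irreducibility hypothesis.)

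For the ``if'' direction --- the substantive one --- assume the congruence, so one root $c$ of $X^2-a_g(p)X+p^{k-1}$ mod $\Lambda$ is $\pm p^{k/2-1}$, which by Ogg's relation \eqref{ogg's result} is the only value $U_p$ can take on a newform of level divisible by $p$; fix $\mathfrak{m}$ with $U_p\equiv c$. This direction is a converse to Mazur's level-lowering principle, and I would establish it geometrically. Replace each space $\mathcal{S}_k(\,\cdot\,)$ by a $\mathbb{T}$-stable integral lattice $M_{\,\cdot\,}$ coming from $H^1$ of the relevant open modular curve with coefficients in $\mathrm{Sym}^{k-2}$ of the standard representation (equivalently, Katz's integral cusp forms), obtaining an exact sequence $0\to M_N^2\xrightarrow{(B_1,B_p)}M_{Np}\to M_{Np}^{p\text{-new}}\to 0$ of $\mathbb{T}$-modules. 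The hypothesis $\ell\nmid\frac{1}{2}\phi(N)Np(k-2)!$ (together with $\ell$ large) is exactly what keeps this sequence exact after $\otimes\,\mathbb{F}_\ell$: the factor $(k-2)!$ keeps the local system torsion-free and in the Fontaine--Laffaille range, while $\phi(N)Np$ forces the two degeneracy maps to remain jointly injective mod $\ell$, which is Ihara's lemma together with a bound on its Eisenstein kernel. Localizing at $\mathfrak{m}$ and reducing mod $\ell$, the $p$-old part is a quotient of $(M_N)_{\mathfrak{m}}^{2}$ carrying the quadratic relation $U_p^2-a_g(p)U_p+p^{k-1}$; the boxed congruence is precisely the mod-$\ell$ degeneracy of this quadratic --- equivalently, of the monodromy pairing at $p$ on the $p$-new part of $J_0(Np)$ (or its $\mathrm{Sym}^{k-2}$ analogue) --- and a length count over $\mathbb{T}_{\mathfrak{m}}$ then forces a nonzero $p$-new contribution, i.e.\ $(M_{Np}^{p\text{-new}})_{\mathfrak{m}}\neq 0$. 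Hence there is a $p$-new eigenform in $\mathcal{S}_k(Np)$ with eigensystem in $\mathfrak{m}$, and by newform theory it has level $dp$ for some $d\mid N$; this is the required $f$.

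\textbf{The hard part is the ``if'' direction}, and within it the integral input: that the old/new short exact sequence survives reduction mod $\ell$, and that mod $\ell$ the monodromy degeneracy at $p$ on the $p$-new part is detected exactly by $a_g(p)^2\equiv p^{k-2}(1+p)^2$. For $k=2$ this is Ribet's original argument using the Jacobian $J_0(Np)$, its semistable reduction at $p$, and the Deligne--Rapoport model of the special fibre; for general even $k$ one must run it on \'etale cohomology with $\mathrm{Sym}^{k-2}$-coefficients, where controlling $\ell$-torsion in the cohomology is the technical heart and produces the lower bound on $\ell$ and the condition $\ell\nmid(k-2)!$. A second subtlety, essential to the applications in this paper, is that nothing is assumed about the (ir)reducibility of $\overline{\rho}_{g,\Lambda}$: mod-$\ell$ multiplicity one is therefore unavailable and the module computation must be carried out directly, and the factor $\phi(N)Np$ in the hypothesis is precisely what pushes the Eisenstein contributions aside so that the length count still goes through.
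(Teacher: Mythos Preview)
The paper does not prove this theorem: it is quoted verbatim as \cite[Theorem~1]{dia} (Diamond's level-raising theorem) and used as a black box in Section~\ref{proof_main}, Subcase~(b). So there is no ``paper's own proof'' to compare against.

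Your sketch is a faithful outline of Diamond's actual argument: the necessity direction via Brauer--Nesbitt plus the local shape at $p$ of a Steinberg newform, and the sufficiency direction via Ihara's lemma on the degeneracy maps, the $p$-old/$p$-new exact sequence for integral cohomology with $\mathrm{Sym}^{k-2}$-coefficients, and a length comparison localized at $\mathfrak m$. You also correctly flag the two delicate points --- that no irreducibility of $\overline{\rho}_{g,\Lambda}$ is assumed (so multiplicity-one shortcuts are unavailable), and that the hypothesis $\ell\nmid\frac12\phi(N)Np(k-2)!$ is what makes the integral exact sequence survive reduction mod $\ell$.

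One remark: the displayed condition in the paper's statement reads $a_f(p)^2\equiv p^{k-2}(1+p)^2\pmod{\Lambda}$, but as you implicitly noticed (and as the application in Subcase~(b) confirms, where the condition is checked for the level-$p$ newform $h$ before $f$ is produced), the level-raising criterion is on the form $g$ one starts with, i.e.\ it should read $a_g(p)^2\equiv p^{k-2}(1+p)^2\pmod{\Lambda}$. Your proposal handles this correctly throughout.
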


\subsection{$\boldsymbol{\oq{Mod}}$ $\ell$ modular forms}

Let $K$ be the compositum of coefficient fields of basis elements of $\mathcal{M}_k(N)$ and $\mathcal{O}_K$ the associated number ring.
For a fixed prime $\ell$ with $\ell\nmid N$, let $\Lambda$ be a prime ideal lying over $\ell$ in $\mathcal{O}_K$. Denote the localization of the ring  $\mathcal{O}_{K}$ at a prime ideal $\Lambda$ by 
    $\mathcal{O}_{K_{(\Lambda)}}$. An element of the ring $\mathcal{O}_{K_{(\Lambda)}}$ is called $\Lambda$-integral. We  denote the space of \emph{$\oq{mod\,\ell}$ modular forms} of weight $k$ and level $N$ by $\mathcal{M}_k(N,\overline{\mathbb{F}}_{\ell})$ and define it as 
\[ \mathcal{M}_k(N,\overline{\mathbb{F}}_{\ell}) \coloneqq 
    \left\{\bar{f}(z) = \sum\bar{a}_f(n)q^n : f\in \mathcal{M}_k(N) \text{ and } a_f(n)\in\mathcal{O}_{K_{(\Lambda)} } \text{for some prime $\Lambda|\ell$} \right\},
\]
where $\bar{a}_f(n)$ denotes the reduction $a_f(n)$ modulo $\Lambda$ and similarly we can define the space of \emph{$\oq{mod\,\ell}$ cusp forms} of weight $k$ and level $N$, denoted by $\mathcal{S}_k(N,\overline{\mathbb{F}}_{\ell})$. {For more details on mod$\,\ell$ modular forms, we refer the reader to Section $7$ of \cite[Chapter 10]{lang1}}.

Let $f(z) = \sum_{n\geq 1} a_f(n)e^{2\pi i n z} $ and $g(z) = \sum_{n\geq 1} a_g(n)e^{2\pi i n z}\in \mathcal{M}_k(N)$ be two forms, we call $f$ and $g$ to be \emph{congruent modulo $\Lambda$} and write $f(z)\equiv g(z)\modlam$ if $a_f(n)\equiv a_g(n)\modlam$ for all $n\geq 0.$
 Modular forms of different weights may be congruent modulo a prime ideal. 
 In fact, the intersection $\mathcal{M}_k(N,\overline{\mathbb{F}}_{\ell})\cap \mathcal{M}_{k'}(N,\overline{\mathbb{F}}_{\ell})$ is nonempty iff $k\equiv k'\pmod{\ell-1}$ (\cite{hpf} for $N=1$, \cite{katz} for $N>1$).


\begin{proposition}\label{lang}
    Let $f(z)= \sum_{n\geq 1} a_f(n)e^{2\pi i n z}$  and $g(z)= \sum_{n\geq 1} a_g(n)e^{2\pi i n z} \in \mathcal{M}_k(N)$ be two forms such that $a_f(n)$ and $a_g(n)$ are $\Lambda$-integral for all $n\ge 0$. Assume that 
    $$
        k \not\equiv 0 \pmod{\ell-1} \quad \mathrm{and}\quad a_f(n)\equiv a_g(n) \modlam  ~~~ \mathrm{for}~~ n\ge 1. 
    $$
    Then $f(z)\equiv g(z) \modlam$.
\end{proposition}
\begin{proof}
   We need to show that $a_f(0)\equiv a_g(0)\modlam$.  On the contrary, let us assume that $a_f(0)\not\equiv a_g(0)\modlam.$
   Define $h(z)\coloneqq a_f(0)-a_g(0)$. Then $h(z)$ is a modular form of weight $0$ and level $N$ and
   \[
            f(z)-g(z)\equiv h(z)\modlam.
   \]
   By the above result of Katz, it follows that 
   $
            k\equiv 0 \pmod{\ell-1},
   $
   which is a contradiction.
\end{proof}

Define $\mathcal{M}_k(N,\overline{\mathbb Z}_{\ell}):= \mathcal{M}_k(N,\bb Z)\otimes_\mathbb Z  \overline{\mathbb Z}_{\ell}$, where $\mathcal{M}_k(N,\mathbb Z)$ consists of forms in $\mathcal M_k(N)$ with integer Fourier coefficients. Similarly, we can define the space $\mathcal{S}_k(N,\overline{\mathbb Z}_{\ell})$. The Hecke operators defined earlier act on the space $\mathcal{S}_k(N,\overline{\mathbb Z}_{\ell})$ and also act
on the space $\mathcal{S}_k(N,\overline{\mathbb F}_{\ell})$ but with a small modification that the action of $T_\ell$ on $\mathcal{S}_k(N,\overline{\mathbb F}_{\ell})$ coincides with the action of $U_\ell$ which gives the notions of eigenforms in these spaces. Furthermore any form $f\in \mathcal{S}_k(N,\overline{\mathbb F}_{\ell})$ is the reduction of a form in $\mathcal{S}_k(N,\overline{\mathbb Z}_{\ell})$ as stated in the following lemma. 

\begin{proposition}[Carayol's lemma]\label{carayol}
Let $N\geq 1$, $\ell\ge 5$ be a prime not dividing $N$, and $k\geq 2$. 
Then, the reduction map $\phi: \mathcal{S}_k(N,\overline{\mathbb Z}_{\ell})\rightarrow \mathcal{S}_k(N,\overline{\mathbb F}_{\ell})$ is surjective.
\end{proposition}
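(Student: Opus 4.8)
The plan is to deduce the statement from the $q$-expansion principle for an integral model of the modular curve, the key point being that the lattice $\mathcal S_k(N,\mathbb Z)$ of cusp forms with integral Fourier coefficients is already the full $\ell$-integral lattice. Write $d=\dim_{\mathbb C}\mathcal S_k(N)$ and fix a $\mathbb Z$-basis $g_1,\dots,g_d$ of $\mathcal S_k(N,\mathbb Z)$; it is simultaneously a $\mathbb Z_\ell$-basis of $\mathcal S_k(N,\mathbb Z)\otimes_{\mathbb Z}\mathbb Z_\ell$ and an $\overline{\mathbb Z}_\ell$-basis of $\mathcal S_k(N,\overline{\mathbb Z}_\ell)=\mathcal S_k(N,\mathbb Z)\otimes_{\mathbb Z}\overline{\mathbb Z}_\ell$. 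The heart of the matter is the following integrality assertion: \emph{if $f\in\mathcal S_k(N)$ has every Fourier coefficient in $\overline{\mathbb Z}_\ell$ (via the fixed embedding $\overline{\mathbb Q}\hookrightarrow\overline{\mathbb Q}_\ell$), then $f\in\mathcal S_k(N,\overline{\mathbb Z}_\ell)$.}

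To establish this I would first pass to an auxiliary fine moduli problem: since $\ell\ge 5$ and $\ell\nmid N$, the moduli problem for $\Gamma_0(N)\cap\Gamma_1(m)$ with $m\in\{3,4\}$ coprime to $\ell N$ is representable by a smooth curve over $\mathbb Z_\ell$, and cusp forms of weight $k$ with $\mathbb Z_\ell$-coefficients are identified, via Katz's $q$-expansion principle, with the global sections over $\mathbb Z_\ell$ of $\underline\omega^{\otimes k}(-\mathrm{cusps})$ on the smooth compactification. Flat base change along $\mathbb Z_\ell\to\overline{\mathbb Z}_\ell$, followed by taking $(\mathbb Z/m\mathbb Z)^{\times}$-invariants (an exact operation since $\ell\nmid m$), yields $\mathcal S_k(N,\overline{\mathbb Z}_\ell)$, and comparing $q$-expansions at the standard cusp proves the assertion. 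In practice one only needs the $\mathbb Z_\ell$-form of this statement together with the elementary observation that, after a $\mathbb Z_\ell$-change of basis putting the $q$-expansion matrix of $g_1,\dots,g_d$ in echelon form with unit pivots, membership of $f=\sum_i c_i g_i$ in $\overline{\mathbb Z}_\ell[[q]]$ forces all $c_i\in\overline{\mathbb Z}_\ell$. Alternatively, this integrality fact may simply be cited from the reduction theory of modular forms (e.g.\ \cite[Chapter~10]{lang1}).

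Granting the integrality assertion, surjectivity of $\phi$ follows at once. Given $\bar f\in\mathcal S_k(N,\overline{\mathbb F}_\ell)$, by definition there are a prime $\Lambda\mid\ell$ and a form $f\in\mathcal S_k(N)$ with $a_f(n)\in\mathcal O_{K_{(\Lambda)}}$ for all $n\ge 0$ whose reduction modulo $\Lambda$ is $\bar f$. Via the fixed embedding we have $\mathcal O_{K_{(\Lambda)}}\subseteq\overline{\mathbb Z}_\ell$, so $f$ has $\overline{\mathbb Z}_\ell$-integral $q$-expansion; hence $f\in\mathcal S_k(N,\overline{\mathbb Z}_\ell)$ by the assertion, and $\phi(f)=\bar f$ because reduction modulo $\Lambda$ agrees with reduction to the residue field $\overline{\mathbb F}_\ell$ of $\overline{\mathbb Z}_\ell$. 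The main obstacle is thus the integrality assertion itself, which is precisely where the hypotheses $\ell\ge 5$ and $\ell\nmid N$ enter (to secure a good integral model and apply the $q$-expansion principle); everything after that is formal.
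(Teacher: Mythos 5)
The paper does not actually prove this proposition: it is stated without argument and labelled ``Carayol's lemma,'' as a standard result from the reduction theory of modular forms (the surrounding text points to \cite[Chapter~10, \S 7]{lang1}), so there is no in-paper proof to compare against. Your proof is the standard geometric argument one would give if pressed for one, and its overall structure --- (i) reduce to the integrality assertion that a classical cusp form in $\mathcal S_k(N)$ whose $q$-expansion lands in $\overline{\mathbb Z}_\ell$ under the fixed embedding already lies in $\mathcal S_k(N,\mathbb Z)\otimes\overline{\mathbb Z}_\ell$; (ii) prove that assertion by passing to a rigid auxiliary moduli problem, invoking Katz's $q$-expansion principle over $\mathbb Z_\ell$, and descending by taking invariants under a group of order prime to $\ell$; (iii) deduce surjectivity of $\phi$ directly from the paper's definition of $\mathcal S_k(N,\overline{\mathbb F}_\ell)$ --- is sound and identifies exactly where the hypotheses $\ell\ge 5$ and $\ell\nmid N$ are used.

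Two points in the execution are imprecise and worth tightening. First, in the final paragraph you take an arbitrary prime $\Lambda\mid\ell$ of $\mathcal O_K$ (as the paper's definition allows) and then assert $\mathcal O_{K_{(\Lambda)}}\subseteq\overline{\mathbb Z}_\ell$ ``via the fixed embedding''; this inclusion holds only when $\Lambda$ is the prime of $\mathcal O_K$ induced by that embedding. For a different $\Lambda$ one must first choose $\sigma\in\mathrm{Aut}(\overline{\mathbb Q}/\mathbb Q)$ aligning $\Lambda$ with the fixed embedding, replace $f$ by $\sigma(f)\in\mathcal S_k(N)$ (which has $\overline{\mathbb Z}_\ell$-integral $q$-expansion and reduces to $\bar f$ after the matching identification $\mathcal O_K/\Lambda\hookrightarrow\overline{\mathbb F}_\ell$), and apply the integrality assertion to $\sigma(f)$. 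Second, the choice of auxiliary level ``$m\in\{3,4\}$ coprime to $\ell N$'' is not always available: $[\Gamma_1(3)]$ is not a rigid moduli problem (a curve with $j=0$ has an automorphism of order $3$ fixing a point of exact order $3$), and if $6\mid N$ then neither $3$ nor $4$ is coprime to $N$. The correct statement is that one may choose any integer $m\ge 4$ (or $m\ge 3$ with full level-$m$ structure) coprime to $N\ell$ with $\ell\nmid\phi(m)$; since $\ell\ge 5$ such $m$ always exists, and the rest of the argument goes through verbatim.
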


By abuse of notation, we denote the commutative Hecke algebra acting on $\mathcal{S}_k(N, \overline{\bb{Z}}_\ell)$ by $\bb{T}_k(N)$. Then for an eigenform  $f\in \mathcal{S}_k(N, \overline{\bb{Z}}_\ell)$,  the reduction map $\psi_f: \bb{T}_k(N)\rightarrow \overline{\bb{F}}_{\ell}$ defined by 
  $\psi^{}_f(T_n) = a_f(n)$, for $n\ge 1$ is a homomorphism. We end this section by stating the following version of the Deligne-Serre lifting lemma.

\begin{proposition}[{Deligne-Serre lifting lemma}] \label{ds}
Let $k\geq 2$, $N\ge 1$ be positive integers, and $\ell$ be a prime. Let $f\in \mathcal{S}_k(N, \overline{\bb{Z}}_\ell)$ be an eigenform. Then there exist an eigenform $g(z) 
\in \mathcal{S}_k(N)$ with Fourier coefficients $a_g(n)$ and a prime ideal $\Lambda$ lying over $\ell$ in the number ring $K_g$ such that
            $a_g(n)\equiv \psi_f(T_n)\modlam, \text{ for all } n\ge 1. $ 
\end{proposition}

\subsection{Galois representations}
The seminal works of Eichler, Shimura, and Deligne show that to a newform $f(z) = \sum\limits_{n\geq 1} a_f(n)q^{n}\in \mathcal{S}_k(N)$,  we can attach an $\ell$-adic Galois representation 
\[
		\rho_{f,\Lambda} : \mathrm{G}_\mathbb{Q} \rightarrow \mathrm{GL}_2(K_{f,\Lambda})
\]
 which is odd, unramified outside $N\ell$, and  for any $q\nmid N\ell$
\[
    \mathrm{tr}(\rho_{f,\Lambda}(\mathrm{Frob}_q)) = a_f(q) ~~ \text{  and } ~~ \mathrm{det}(\rho_{f,\Lambda}(\mathrm{Frob}_q))=q^{k-1},
\]
where $\Lambda$ is a prime ideal lying over $\ell$ in coefficient field $K_f$, $K_{f,\Lambda}$ is  the completion of $K_f$ at $\Lambda$ and $\oq{Frob}_q$ is the Frobenius element at $q$.  
In fact taking a suitable conjugate of $\rho_{f,\Lambda}$, we may assume that it is valued in $\mathcal{O}_{K_{f,\Lambda}}$ and further reduction modulo $\Lambda$ yields $\oq{mod\,\ell}$ Galois representation 
$$\overline{\rho}_{f,\Lambda}: \mathrm{G}_\mathbb{Q} \rightarrow \mathrm{GL}_2(\mathcal{O}_{K_{f,\Lambda}}/\Lambda)\xhookrightarrow \rm \oq{GL}_2(\overline{\bb{F}}_\ell).$$

Let $D_p$ denote the decomposition group of $G_\bb{Q}$ at a prime ideal lying over $p$.
For any algebraic integer $x$,  let 
\[
            \omega_x : D_p \rightarrow \overline{\bb{F}}_\ell^{\times}
\]
be the unique unramified character with values in $\overline{\bb{F}}_\ell^{\times}$ and $\omega_x(\oq{Frob}_p) = x\modl$. With these notations, we have the following result due to Langlands \cite{lan}.

\begin{theorem}[Langlands]\label{langland}
The restriction of $\overline{\rho}_{f,\Lambda}$ to $D_p$ is given by
\[
       \overline{\rho}_{f,\Lambda}|^{}_{D_p} \simeq \begin{pmatrix}
           \chi_{\ell}^{k/2} & *\\   &   \chi_{\ell}^{k/2-1}
       \end{pmatrix} \otimes \omega_{{a_f(p)}/{p^{k/2-1}}}.
\]

\end{theorem}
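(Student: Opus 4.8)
The plan is to prove the local description of $\overline{\rho}_{f,\Lambda}$ at a prime $p \mid N$ by reducing the statement to a known description of the $\ell$-adic representation $\rho_{f,\Lambda}$ restricted to the decomposition group $D_p$, and then passing to the residual representation. First I would recall the local Langlands classification at $p$: since $f$ is a newform of squarefree level $N$ with $p \| N$ and trivial nebentypus, the local automorphic representation $\pi_p$ attached to $f$ at $p$ is (a twist of) the Steinberg representation; concretely $\pi_p \cong \mathrm{St} \otimes \mu$ where $\mu$ is an unramified quadratic character determined by the $U_p$-eigenvalue $a_f(p)$, which satisfies $a_f(p)^2 = p^{k-2}$ (this is the classical fact for squarefree level, and is consistent with \eqref{ogg's result}). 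Translating through the local Langlands correspondence (as in Langlands' and Carayol's work), the associated Weil--Deligne representation, hence $\rho_{f,\Lambda}|_{D_p}$, is (up to conjugation) an extension of the unramified character sending $\mathrm{Frob}_p \mapsto a_f(p)$ by the unramified character sending $\mathrm{Frob}_p \mapsto a_f(p)\cdot p$, in a suitable normalization. Writing $a_f(p) = u \, p^{k/2-1}$ with $u = a_f(p)/p^{k/2-1}$ a unit (in fact $u = \pm 1$), this takes the shape
\[
    \rho_{f,\Lambda}|_{D_p} \simeq \begin{pmatrix} \chi_\ell^{k/2} & * \\ & \chi_\ell^{k/2-1} \end{pmatrix} \otimes \widetilde\omega_{u},
\]
where $\widetilde\omega_u$ is the unramified character with $\widetilde\omega_u(\mathrm{Frob}_p) = u$ and $\chi_\ell$ is the $\ell$-adic cyclotomic character; the diagonal entries have the exponents $k/2$ and $k/2-1$ because the determinant of $\rho_{f,\Lambda}|_{D_p}$ is $\chi_\ell^{k-1}$ and the ratio of the two unramified twists is $\chi_\ell$ (equivalently, $p$ times unramified), forced by the Steinberg shape.

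Next I would reduce modulo $\Lambda$. After conjugating $\rho_{f,\Lambda}$ to take values in $\mathrm{GL}_2(\mathcal{O}_{K_{f,\Lambda}})$, the reduction $\overline\rho_{f,\Lambda}|_{D_p}$ inherits the block-upper-triangular shape, and the $\ell$-adic cyclotomic character $\chi_\ell$ reduces to $\overline\chi_\ell$, which in the excerpt's notation is written again as $\chi_\ell$ on the residue level. The unramified twist $\widetilde\omega_u$ reduces to the unramified character $\omega_{u}$ with $\omega_u(\mathrm{Frob}_p) = u \bmod \ell = (a_f(p)/p^{k/2-1}) \bmod \ell$, which is exactly $\omega_{a_f(p)/p^{k/2-1}}$ in the notation just defined. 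This yields precisely
\[
    \overline{\rho}_{f,\Lambda}|_{D_p} \simeq \begin{pmatrix} \chi_\ell^{k/2} & * \\ & \chi_\ell^{k/2-1} \end{pmatrix} \otimes \omega_{a_f(p)/p^{k/2-1}},
\]
as claimed. One should note that the off-diagonal entry $*$ may become zero after reduction (the residual representation need not remain non-split), which is why the statement is phrased with a generic $*$ rather than asserting a genuine non-split extension.

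The main obstacle, and the part requiring the most care, is the precise normalization in the local Langlands / local-global compatibility step: getting the arithmetic normalization right so that the diagonal characters carry the exponents $k/2$ and $k/2-1$ rather than, say, $k-1$ and $0$, or a Tate twist thereof. This is where one must invoke the work of Langlands (and Deligne, Carayol) on the local components of $\rho_{f,\Lambda}$ at primes dividing the level, together with the fact that for squarefree level the relevant local representation is an unramified twist of Steinberg, pinned down by $a_f(p)^2 = p^{k-2}$. Once that normalization is fixed, everything else — the determinant computation forcing the ratio of the diagonal characters to be $\chi_\ell$, and the reduction modulo $\Lambda$ — is routine. I would therefore structure the write-up as: (i) identify $\pi_p$ as an unramified twist of Steinberg; (ii) cite Langlands for the shape of $\rho_{f,\Lambda}|_{D_p}$ in the correct normalization; (iii) factor out the unramified twist $\omega_{a_f(p)/p^{k/2-1}}$ and check the diagonal exponents via the determinant; (iv) reduce modulo $\Lambda$.
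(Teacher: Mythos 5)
Your proposal is correct and follows the standard route: identify $\pi_p$ as an unramified twist of Steinberg (using $a_f(p)=\pm p^{k/2-1}$ from Atkin--Lehner theory), invoke local--global compatibility to get the upper-triangular shape of $\rho_{f,\Lambda}|_{D_p}$ with diagonal $\chi_\ell^{k/2}\omega_u$, $\chi_\ell^{k/2-1}\omega_u$ where $u=a_f(p)/p^{k/2-1}$, and then reduce modulo $\Lambda$. The paper does not prove this theorem but cites it directly to Langlands, so your outline is a faithful reconstruction of the argument in the cited source rather than an alternative to anything in the paper.
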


\section{An Eisenstein Series for Squarefree Level $N$}
Let $k\geq 2$ be an even integer, $N=p_1p_2\cdots p_t>1$ be a squarefree integer, and $\varepsilon$ be an Atkin-Lehner eigensystem for $\Gamma_0(N)$. Recall that 
$\varepsilon$ can also be considered as an Atkin-Lehner eigensystem for $\Gamma_0(M)$ for any $M\mid N$ by restricting it to $\mathcal{P}_M$. So, by an abuse of notation, we also write $\mathcal M_k^{(\varepsilon)}(M)$ to denote the space of modular forms of weight $k$ for $\Gamma_0(M)$ with Atkin-Lehner eigensystem $\varepsilon$ and similarly for $\mathcal S_k^{(\varepsilon)}(M)$. 

\subsection{Eisenstein series}\label{eisenstein series}
We define an Eisenstein series of weight $k\ge 2$, level $N$ and Atkin-Lehner eigensystem $\varepsilon$ by
\begin{align}\label{eisenstein_N}
	\mathcal{E}_{k,N}^{(\varepsilon)}(z) &:= \prod_{i=1}^t(1+\varepsilon(p_i)W_{p_i})E_k(z) \notag\\
 &= E_k(z)+\sum\limits_{\substack{{1<d|N}}} \varepsilon(d) d^{k/2}E_k(dz),
\end{align}
where $E_k(z)$ is the 
Eisenstein series of weight $k$ and level $1$, defined as
$$
E_k(z)=-\frac{B_k}{2k}+\sum_{n\ge 1}\sigma_{k-1}(n)e^{2\pi i n z}.
$$
We know that, $\mathcal{E}_{k,N}^{(\varepsilon)}(z)\in \mathcal M_k^{(\varepsilon)}(N)$, provided for $k=2$, $\varepsilon(p_j)=-1$ for some $j$. Since the Atkin-Lehner operator $W_{p_i}$ commutes with the $q$th Hecke operator $T_q$ for any $q\nmid N$, it follows that $\mathcal{E}_{k,N}^{(\varepsilon)}(z)$ is an eigenfunction for any Hecke operator $T_q$ with eigenvalue $1+q^{k-1}$. But in general, $\mathcal{E}_{k,N}^{(\varepsilon)}(z)$ is neither a cusp form nor an eigenfunction of $U_{p_i}$. Nevertheless, in the next two sections, we shall prove that for a suitable prime $\ell$, the reduction of $\mathcal{E}_{k,N}^{(\varepsilon)}(z)$ modulo $\ell$ is not only a $\oq{mod\,\ell}$ cusp form but also an eigenfunction for the Hecke operator $U_{p_i}$ for any $1\le i\le t$.


\subsection{Constant terms at the cusps of $\Gamma_0(N)$}
This section aims to compute the constant term of $\mathcal{E}_{k,N}^{(\varepsilon)}(z)$ at each of the cusps of $\Gamma_0(N)$ and identify a suitable prime $\ell$ such that the $\oq{mod\,\ell}$ modular form $\overline{\mathcal{E}}_{k,N}^{(\varepsilon)}(z)$ belongs to $\mathcal{S}_k(N,\overline {\mathbb F}_\ell)$ for each $k\ge 2$. 

Clearly, the Fourier  expansion of $\mathcal{E}_{k,N}^{(\varepsilon)}(z)$ at the cusp $\infty$ is given by 
$$\mathcal{E}_{k,N}^{(\varepsilon)}(z)
= -\frac{B_k}{2k} \prod\limits_{i=1}^{t}(1+\varepsilon(p_i) p_i^{k/2})+\sum\limits_{n\geq 1}\left( \sum\limits_{d|N}^{} \varepsilon(d) d^{k/2}  \sigma_{k-1}\left(\frac{n}{d}\right)\right) e^{2\pi inz},$$

where $\sigma_{k-1}\left(\frac{n}{d}\right)=0$ if $d\nmid n$. We denote its $n$th Fourier coefficient by $a(n)$, so 
$$
a(n)=\begin{cases}
    -\frac{B_k}{2k} \prod\limits_{i=1}^{t}(1+\varepsilon(p_i) p_i^{k/2}) & n=0\\
    \sum\limits_{d|N}^{} \varepsilon(d) d^{k/2}  \sigma_{k-1}\left(\frac{n}{d}\right)& n\ge 1.
\end{cases}
$$
A set of non-equivalent cusps of $\Gamma_0(N)$ is given by 
$$\left\{\frac{1}{M}:M\ge 1, M\mid N\right\}$$
and hence the number of cusps of $\Gamma_0(N)$ is $2^t$.

We first state the following result of Billery-Menares, which provides the constant term of $E_k(dz)$ at any cusp of $\Gamma_0(d)$ for $k\ge 4$.

\begin{proposition}\cite[Proposition 1.2]{bm}\label{bm_constant}
Suppose $k\geq 4$. Let  $\gamma = \begin{pmatrix}
	 u & v \\ x & w
	\end{pmatrix} \in \rm{SL}_2(\mathbb{Z})$ and $d\ge 1$ be an integer. The constant term of the Fourier expansion of $E_k(dz)|_{k}\gamma $ is  $-\dfrac{1}{d'^k}\dfrac{B_k}{2k}$ where $d' = \dfrac{d}{(x,d)}$ {and the slash-$k$ operator is defined as $f|_k \gamma \coloneqq (xz+w)^{-k} f\left(\frac{uz+v}{xz+w} \right)$}.
\end{proposition}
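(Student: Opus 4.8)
The plan is to reduce the statement to the modularity of the weight-$k$ level-$1$ Eisenstein series $E_k$ on $\mathrm{SL}_2(\mathbb{Z})$, which is available precisely because $k\ge 4$ is even. First I would unwind the definition of the slash operator: $E_k(dz)|_k\gamma(z) = (xz+w)^{-k}E_k\bigl(d(\gamma z)\bigr)$, and since $d(\gamma z) = Mz$ for the integral matrix $M \coloneqq \begin{pmatrix} d & 0 \\ 0 & 1\end{pmatrix}\gamma = \begin{pmatrix} du & dv \\ x & w\end{pmatrix}$ of determinant $d$, the computation comes down to understanding the action of $M$ on the upper half-plane.

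The key arithmetic input is a factorization of $M$ through an upper-triangular matrix. By Hermite normal form (equivalently, elementary divisor theory for $2\times 2$ integral matrices), one can write $M = \gamma'\begin{pmatrix} g & b \\ 0 & d'\end{pmatrix}$ with $\gamma'\in\mathrm{SL}_2(\mathbb{Z})$, $g,d'>0$ and $gd'=d$; the content here is to identify $g=(d,x)$, so that $d'=d/(d,x)$ as in the statement. This rests on the identity $\gcd(du,x)=\gcd(d,x)$, which in turn follows from $\gcd(u,x)=1$ (a consequence of $uw-vx=1$). Forced by $M=\gamma'\begin{pmatrix} g & b \\ 0 & d'\end{pmatrix}$, the first column of $\gamma'$ is $(du/g,\,x/g)$ and then $\gamma'=\begin{pmatrix} du/g & q \\ x/g & s\end{pmatrix}$ for a completing pair $q,s$; the relation $uw-vx=1$ is exactly what makes the bottom-right entry of the triangular factor come out to $d'$ and yields the identity $(x/g)b+sd'=w$ that I will use below. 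At this stage $d(\gamma z)=\gamma'\bigl(\tfrac{gz+b}{d'}\bigr)$.

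Next I would apply modularity: for even $k\ge 4$, $E_k$ is a holomorphic modular form of weight $k$ on $\mathrm{SL}_2(\mathbb{Z})$, so $E_k(\gamma'\tau)=\bigl((x/g)\tau+s\bigr)^k E_k(\tau)$. Substituting $\tau=\tfrac{gz+b}{d'}$ and simplifying the automorphy factor via $(x/g)b+sd'=w$ gives $(x/g)\tau+s=\tfrac{xz+w}{d'}$, whence $E_k\bigl(d(\gamma z)\bigr)=\bigl(\tfrac{xz+w}{d'}\bigr)^k E_k\bigl(\tfrac{gz+b}{d'}\bigr)$. Feeding this back into the first step, the factors $(xz+w)^{\pm k}$ cancel and we are left with $E_k(dz)|_k\gamma(z)=\tfrac{1}{(d')^{k}}\,E_k\bigl(\tfrac{gz+b}{d'}\bigr)$. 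Letting $z\to i\infty$ so that $\tfrac{gz+b}{d'}\to i\infty$, the constant term is read off from the $q$-expansion $E_k(\tau)=-\tfrac{B_k}{2k}+\sum_{n\ge 1}\sigma_{k-1}(n)e^{2\pi i n\tau}$, giving $-\tfrac{1}{(d')^{k}}\tfrac{B_k}{2k}$, as claimed.

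The main obstacle --- and the reason the hypothesis $k\ge 4$ is imposed --- is the appeal to full modularity of $E_k$ in the third step: for $k=2$ the series $E_2$ is only quasimodular, with an extra $c'(c'\tau+s)$-type term in its transformation law under $\gamma'$, so both the computation and the final formula would require modification (which is why weight $2$ is handled separately in this paper). The matrix factorization in the second step is elementary but carries all of the bookkeeping and is what pins down the normalization $d'=d/(x,d)$; I would verify the coprimality claims $\gcd(u,x)=1$ and $\gcd(du/g,x/g)=1$ and the identity $(x/g)b+sd'=w$ with care.
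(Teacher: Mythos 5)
Your proof is correct, and it takes a genuinely different route from the one the paper relies on. You factor $M=\begin{pmatrix} du & dv\\ x & w\end{pmatrix}$ via Hermite normal form as $M=\gamma'\begin{pmatrix} g & b\\ 0 & d'\end{pmatrix}$ with $g=(d,x)$, then invoke the $\mathrm{SL}_2(\mathbb{Z})$-modularity of $E_k$ to cancel automorphy factors and read off $E_k(dz)|_k\gamma = (d')^{-k}E_k\left(\frac{gz+b}{d'}\right)$; the computation is clean and the identity $(x/g)b+sd'=w$, which you flag as needing care, indeed follows by just comparing the $(2,2)$ entries of $M$ and $\gamma' T$. The paper instead cites \cite[Proposition~1.2]{bm}, and when it proves the analogous statement for $k=2$ it works directly from the lattice-sum representation of the constant term (\cite[Lemma~10]{bm2}), evaluating $-\frac{1}{8\pi^2}\sum_{(a,b)\ne(0,0),\ adu+bx=0}(adv+bw)^{-2}$ by hand; the paper says this is ``a similar argument'' to the one in \cite{bm}, so the source presumably proceeds by the same direct series computation for $k\ge 4$. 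The trade-off is exactly as you observe: your modularity argument is conceptually tidier but structurally cannot reach $k=2$ because $E_2$ is only quasimodular, whereas the lattice-sum computation is more bookkeeping-heavy but extends uniformly to $k=2$ with no change of method --- which is precisely why the paper adopts it to prove its $k=2$ extension.
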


The above result is also valid for $k=2$ as proved below.
\begin{proposition}
  For $k=2$,  Proposition \ref{bm_constant}  is  true.
\end{proposition}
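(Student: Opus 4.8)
The proof of \cite[Proposition 1.2]{bm} uses only that $E_k$ is a holomorphic modular form of weight $k$ and level $1$, and the one place this fails for $k=2$ is that $E_2$ is merely quasi-modular. The plan is therefore to run exactly the same slash computation with $E_2$ replaced by its non-holomorphic modular completion, and then to check that the extra non-holomorphic contributions cancel and leave no trace in the constant term. Precisely, let $E_2^*(z)\coloneqq E_2(z)-\dfrac{c_0}{\mathrm{Im}(z)}$ for the appropriate real constant $c_0$, where $E_2(z)=-\tfrac{B_2}{4}+\sum_{n\ge 1}\sigma_1(n)e^{2\pi i n z}$; then $E_2^*$ transforms as a weight-$2$ form for $\mathrm{SL}_2(\mathbb{Z})$, i.e.\ $E_2^*(\gamma z)=(cz+d)^2E_2^*(z)$ for all $\gamma=\left(\begin{smallmatrix}a&b\\c&d\end{smallmatrix}\right)\in\mathrm{SL}_2(\mathbb{Z})$, and $E_2^*(z)\to -\tfrac{B_2}{4}$ as $z\to i\infty$.

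First I would reproduce the elementary matrix factorization already used in \cite{bm}: writing $\gamma=\left(\begin{smallmatrix}u&v\\x&w\end{smallmatrix}\right)$, $\delta=\gcd(x,d)$ and $d'=d/\delta$, the relation $uw-vx=1$ gives $\gcd(u,x)=1$, hence $\gcd(x,du)=\gcd(x,d)=\delta$, so by Hermite reduction $\left(\begin{smallmatrix}d&0\\0&1\end{smallmatrix}\right)\gamma=\left(\begin{smallmatrix}du&dv\\x&w\end{smallmatrix}\right)=\gamma_1\left(\begin{smallmatrix}\delta&b\\0&d'\end{smallmatrix}\right)$ with $\gamma_1=\left(\begin{smallmatrix}*&*\\c_1&d_1\end{smallmatrix}\right)\in\mathrm{SL}_2(\mathbb{Z})$ and $b\in\mathbb{Z}$; comparing bottom rows gives $c_1\delta=x$, $c_1b+d_1d'=w$, so $c_1\tau+d_1=(xz+w)/d'$ where $\tau=\tau(z)=(\delta z+b)/d'$, and $Mz=\gamma_1\tau$ for $M=\left(\begin{smallmatrix}du&dv\\x&w\end{smallmatrix}\right)$. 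Next I would substitute $E_2=E_2^*+c_0/\mathrm{Im}$ into $(E_2(dz)|_2\gamma)(z)=(xz+w)^{-2}E_2(Mz)$, apply the $\mathrm{SL}_2(\mathbb{Z})$-modularity of $E_2^*$ to the term $E_2^*(Mz)=E_2^*(\gamma_1\tau)$, and use $\mathrm{Im}(Mz)=d\,\mathrm{Im}(z)/|xz+w|^2$ together with $\mathrm{Im}(\tau)=(\delta/d')\,\mathrm{Im}(z)$ for the correction term. Because $\delta d'=d$, the two resulting occurrences of $1/\mathrm{Im}(z)$ cancel, and one is left with the holomorphic identity
\[
(E_2(dz)|_2\gamma)(z)=\frac{1}{d'^{\,2}}\,E_2\!\left(\frac{\delta z+b}{d'}\right)-\frac{2ic_0\,x}{d\,(xz+w)} .
\]
Finally, the first summand is periodic and has constant Fourier coefficient $-\tfrac{1}{d'^2}\tfrac{B_2}{4}$, while the second summand tends to $0$ as $z\to i\infty$; hence the constant term of $E_2(dz)|_2\gamma$ is $-\tfrac{1}{d'^2}\tfrac{B_2}{4}=-\tfrac{1}{d'^{\,k}}\tfrac{B_k}{2k}$ for $k=2$, which is the claim.

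The only genuinely delicate point — the main obstacle — is the bookkeeping of the two non-holomorphic pieces that enter when $E_2$ is completed at either end of the slash computation: one has to verify that they cancel exactly, which is precisely the identity $\delta d'=d$. A secondary subtlety worth flagging is that, unlike the case $k\ge 4$, the function $E_2(dz)|_2\gamma$ is not itself $1$-periodic, owing to the leftover term $-2ic_0x/\big(d(xz+w)\big)$; this is exactly why one imposes $\varepsilon(p)=-1$ for some $p\mid N$ in the definition of $\mathcal{E}^{(\varepsilon)}_{2,N}$, since that forces $\sum_{d\mid N}\varepsilon(d)=0$ and the leftover terms cancel in the full sum. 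For the statement about a single $d$, "constant term of the Fourier expansion" is read off as the value at $i\infty$, which the leftover term does not affect.
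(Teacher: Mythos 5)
Your proof is correct, but it takes a genuinely different route from the paper's. The paper invokes \cite[Lemma 10]{bm2}, which expresses the constant term of $E_2(dz)|_2\gamma$ as a lattice sum $-\frac{1}{8\pi^2}\sum_{adu+bx=0}(adv+bw)^{-2}$, and then evaluates that sum in two cases ($u=0$ vs.\ $u\neq 0$), reducing to $\zeta(2)=\pi^2 B_2$. You instead complete $E_2$ to the weight-$2$ Maass--Eisenstein series $E_2^{*}=E_2-c_0/\mathrm{Im}$, rerun the Hermite-factorization argument from the $k\ge 4$ proof of \cite[Proposition 1.2]{bm}, and show that the two non-holomorphic corrections combine (via $\delta d'=d$ and $|xz+w|^2-(xz+w)^2 = -2ix\,\mathrm{Im}(z)(xz+w)$) into the single term $-2ic_0x/\bigl(d(xz+w)\bigr)$, which vanishes at $i\infty$ and so leaves the constant term unchanged at $-\frac{1}{d'^2}\frac{B_2}{4}$. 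I verified this identity and it is correct. Your method has the merit of being self-contained and of making the $k=2$ proof structurally parallel to the $k\ge 4$ proof, whereas the paper's proof is shorter because it outsources the main computation to \cite{bm2}. Your closing remark is also a good one: the leftover term $-2ic_0x/\bigl(d(xz+w)\bigr)$ is exactly the obstruction that forces the hypothesis $\varepsilon(p)=-1$ for some $p\mid N$ in the paper's definition of $\mathcal{E}^{(\varepsilon)}_{2,N}$, since it guarantees $\sum_{d\mid N}\varepsilon(d)d^{k/2}\cdot\frac{x}{d}=0$-type cancellation (in fact $\sum_{d\mid N}\varepsilon(d)=0$) in the full sum; the paper states this holomorphy condition without comment, and your derivation explains it.
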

\begin{proof}
It can be proved by using a similar argument used in the proof of \cite[Proposition 1.2]{bm}.
However, we provide a brief recapitulation of the argument here for the sake of conciseness.    By \cite[Lemma 10]{bm2}, the constant term of the Fourier expansion of $E_2(dz)|_2 \gamma$  is given by
    $$
       -\frac{1}{8\pi^2} \sum\limits_{\substack{(a,b)\in \bb{Z}^2 \backslash \{(0,0)\}\\adu+bx=0}}  \frac{1}{(adv+bw)^2}.
    $$
    If $u=0$, the condition $adu+bx=0$ reduces to $bx=0$ and hence  $b=0$. In this case, $d=d'$ and so the constant term becomes 
    $$ -\frac{1}{8\pi^2}\sum\limits_{a\in \bb{Z}\backslash \{0\}} \frac{1}{(adv)^2}=-\frac{1}{8\pi^2}\frac{2}{d'^{2}}  \zeta(2).$$
    Since $\zeta(2) = \pi^2 B_2$, it gives the desired result.\\
If $u\neq 0$, there are no solutions of the form $(a,0)\in \bb{Z}^2 \backslash \{(0,0)\} $ of $adu+bx=0$. Thus, for any $(a,b)\in \bb{Z}^2 \backslash \{(0,0)\}$ satisfying $adu+bx=0$, we have $adv+bw=\frac{b}{u}$. Hence, the constant term becomes
    $$  -\frac{1}{8\pi^2} \sum\limits_{\substack{a\in\bb{Z}, b\neq 0\\cdu+dx=0}}  \frac{1}{(adv+bw)^2}=  -\frac{1}{8\pi^2}\sum\limits_{\substack{b\neq 0\\d'u\mid b}} \left(\frac{u}{b}\right)^2 =  -\frac{1}{8\pi^2}\sum\limits_{b\neq 0}\frac{1}{(d'b)^2} =  -\frac{1}{8\pi^2}\frac{2}{d'^{2}}  \zeta(2)$$
which completes the proof.
\end{proof}

We are now ready to prove the following result which determines the constant term of $\mathcal{E}_{k,N}^{(\varepsilon)}(z)$ at each of the cusps of $\Gamma_0(N)$. 
\begin{theorem}
For each $k\ge 2$ and a positive divisor $M$ of $N$, the constant term of the Fourier series expansion of $\mathcal{E}_{k,N}^{(\varepsilon)}(z)$ at the cusp $1/M$ is  
$$ -\frac{B_k}{2k} {\varepsilon\left({\frac{N}{M}}\right) \left(\frac{M}{N}\right)^{k/2}} \prod\limits_{i=1}^{t}(1+\varepsilon(p_i) p_i^{k/2}).$$
\end{theorem}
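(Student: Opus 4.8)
The plan is to compute the constant term of $\mathcal{E}_{k,N}^{(\varepsilon)}(z)$ at the cusp $1/M$ by writing $\mathcal{E}_{k,N}^{(\varepsilon)}(z) = \sum_{d\mid N}\varepsilon(d)d^{k/2}E_k(dz)$ and evaluating the constant term of each summand $E_k(dz)$ at $1/M$ via Proposition \ref{bm_constant}. First I would fix a matrix $\gamma_M = \begin{pmatrix} 1 & 0 \\ M & 1\end{pmatrix}\in \mathrm{SL}_2(\mathbb{Z})$ (or more precisely a matrix in $\mathrm{SL}_2(\mathbb{Z})$ sending $\infty$ to $1/M$, which can be taken with lower-left entry $x=M$ up to the usual normalization), so that the constant term of $\mathcal{E}_{k,N}^{(\varepsilon)}(z)$ at $1/M$ is $\sum_{d\mid N}\varepsilon(d)d^{k/2}\cdot\big(\text{constant term of }E_k(dz)|_k\gamma_M\big)$.

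Next, for each divisor $d\mid N$, Proposition \ref{bm_constant} gives the constant term of $E_k(dz)|_k\gamma_M$ as $-\frac{1}{d'^k}\frac{B_k}{2k}$ where $d' = d/(M,d)$; since $N$ is squarefree and $d, M$ both divide $N$, we have $(M,d) = \gcd(M,d)$ and $d' = d/\gcd(M,d)$. Writing $d = d_1 d_2$ where $d_1 = \gcd(M,d)$ divides $M$ and $d_2 = d/d_1$ divides $N/M$ (this decomposition is unique and runs bijectively over pairs $(d_1,d_2)$ with $d_1\mid M$, $d_2\mid N/M$ because everything is squarefree), we get $d' = d_2$. Hence the constant term becomes
\[
-\frac{B_k}{2k}\sum_{d_1\mid M}\sum_{d_2\mid N/M}\varepsilon(d_1 d_2)(d_1 d_2)^{k/2}\frac{1}{d_2^k} = -\frac{B_k}{2k}\left(\sum_{d_1\mid M}\varepsilon(d_1)d_1^{k/2}\right)\left(\sum_{d_2\mid N/M}\varepsilon(d_2)d_2^{-k/2}\right),
\]
using multiplicativity of $\varepsilon$. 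The first factor is $\prod_{p\mid M}(1+\varepsilon(p)p^{k/2})$. For the second factor, I would multiply and divide by $(N/M)^{k/2}$: since $\sum_{d_2\mid N/M}\varepsilon(d_2)d_2^{-k/2} = (N/M)^{-k/2}\sum_{d_2\mid N/M}\varepsilon(d_2)(N/(Md_2))^{k/2}$, and as $d_2$ runs over divisors of $N/M$ so does $N/(Md_2)$, with $\varepsilon(d_2) = \varepsilon(N/M)\varepsilon(N/(Md_2))$ (again by multiplicativity, squarefreeness, and $\varepsilon(\cdot)^2 = 1$), this equals $\varepsilon(N/M)(N/M)^{-k/2}\prod_{p\mid N/M}(1+\varepsilon(p)p^{k/2})$.

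Combining the two factors, the constant term at $1/M$ is
\[
-\frac{B_k}{2k}\,\varepsilon\!\left(\frac{N}{M}\right)\left(\frac{M}{N}\right)^{k/2}\prod_{p\mid M}(1+\varepsilon(p)p^{k/2})\prod_{p\mid N/M}(1+\varepsilon(p)p^{k/2}) = -\frac{B_k}{2k}\,\varepsilon\!\left(\frac{N}{M}\right)\left(\frac{M}{N}\right)^{k/2}\prod_{i=1}^{t}(1+\varepsilon(p_i)p_i^{k/2}),
\]
which is the claimed formula. The main obstacle I anticipate is purely bookkeeping: correctly identifying the matrix $\gamma_M$ representing the cusp $1/M$ so that Proposition \ref{bm_constant} applies with $x = M$ (and checking the slash operator normalization $d^{k/2}E_k(dz)|_k\gamma_M = \varepsilon$-free scalar times $E_k(d'z)|_k(\cdots)$), together with verifying the bijection $d\leftrightarrow(d_1,d_2)$ and the identity $\varepsilon(d_2) = \varepsilon(N/M)\varepsilon(N/(Md_2))$ — all of which are elementary consequences of $N$ being squarefree and $\varepsilon$ being a $\{\pm1\}$-valued multiplicative function, but need to be stated carefully to make the factorization of the double sum rigorous.
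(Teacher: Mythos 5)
Your proof is correct, and it takes a genuinely different route from the paper's. The paper begins from the same identity $a_M(0) = -\frac{B_k}{2k}\sum_{d\mid N}\varepsilon(d)d^{k/2}\frac{(d,M)^k}{d^k}$, then treats $M=1$ and $M=N$ separately and, for $1<M<N$, multiplies through by $\varepsilon(N/M)(N/M)^{k/2}$ and splits the sum into a piece with $(d,M)=1$ and a piece with $(d,M)>1$, each of which is then reindexed (the second requiring an argument that ranges over subsets of prime factors of $M$) so that the two pieces reassemble into $\sum_{d\mid N}\varepsilon(d)d^{k/2}$. You instead exploit the squarefreeness of $N$ at the outset to parameterize $d\mid N$ bijectively as $d=d_1d_2$ with $d_1=\gcd(d,M)\mid M$ and $d_2\mid N/M$, and observe that, since $d'=d_2$ in Proposition~\ref{bm_constant}, the double sum immediately factors by multiplicativity of $\varepsilon$ into $\bigl(\sum_{d_1\mid M}\varepsilon(d_1)d_1^{k/2}\bigr)\bigl(\sum_{d_2\mid N/M}\varepsilon(d_2)d_2^{-k/2}\bigr)$; the first factor is already the product over $p\mid M$, and only the second needs the reindexing $d_2\mapsto (N/M)/d_2$ (using $\varepsilon^2=1$). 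This yields the same final product $\prod_{i}(1+\varepsilon(p_i)p_i^{k/2})$ but handles all $M$, including $M=1$ and $M=N$, in one stroke; the factorization replaces the paper's inclusion--exclusion-flavored case analysis with a shorter multiplicativity argument. The only bookkeeping you flag as a concern — choosing $\gamma_M$ with lower-left entry $M$ and checking the bijection $d\leftrightarrow(d_1,d_2)$ — is exactly what is needed and is routine given $N$ squarefree.
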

\begin{proof}
Let $a_M(0)$ be the constant term of $\mathcal{E}_{k,N}^{(\varepsilon)}(z)$ at the cusp $1/M$. Then by Proposition \ref{bm_constant},  we obtain
\begin{equation}\label{aM0}
	a_M(0)= -\frac{B_k}{2k} \sum\limits_{d\mid N} \varepsilon(d) d^{k/2} \frac{(d,M)^k}{d^k}.
\end{equation}
If $M=1$, we have
\[
	{a_M(0)}= -\frac{B_k}{2k} \varepsilon(N) \left(\frac{1}{N}\right)^{k/2}\sum\limits_{\substack{d\mid N}} \varepsilon\left(\frac{N}{d}\right) \left(\frac{N}{d}\right)^{k/2}= -\frac{B_k}{2k} \varepsilon(N) \left(\frac{1}{N}\right)^{k/2}\sum\limits_{\substack{d\mid N}} \varepsilon(d) d^{k/2}
\] 
which proves the result in this case.\\
If $M=N$, \eqref{aM0} again gives the desired result.\\
Therefore, we only need to consider the cases when $1<M<N$. We can write \eqref{aM0} as
\begin{equation}\label{aM01}
	-\frac{a_M(0)}{\left(\frac{B_k}{2k}\right)}=  \sum\limits_{\substack{d\mid N\\(d,M)=1}} \varepsilon(d) \frac{1}{d^{k/2}} + \sum\limits_{\substack{d\mid N\\(d,M)> 1}} \varepsilon(d) \frac{(d,M)^k}{d^{k/2}}
\end{equation}
and multiplying  by $\varepsilon({\frac{N}{M}})\left(\frac{N}{M}\right)^{k/2}$ on both the  sides gives
\begin{equation}\label{expression_A}
	A= \sum\limits_{\substack{d\mid N\\(d,M)=1}}\varepsilon\left(\frac{N}{M}\right)\varepsilon(d)  \frac{1}{d^{k/2}} \left(\frac{N}{M}\right)^{k/2}  + \sum\limits_{\substack{d\mid N\\(d,M)> 1}}\varepsilon\left(\frac{N}{M}\right)\varepsilon(d)   \frac{(d,M)^k}{d^{k/2}} \left(\frac{N}{M}\right)^{k/2},
\end{equation}
where
\begin{equation}\label{expression_A1}
    A= -\frac{a_M(0)}{\left(\frac{B_k}{2k}\right)}\varepsilon\left({\frac{N}{M}}\right)\left(\frac{N}{M}\right)^{k/2}.
\end{equation}

Let us denote the sums appearing on the right side of \eqref{expression_A} by $S_1$ and $S_2$, respectively. We will calculate these two sums separately.  Because $\varepsilon(d)^2=1$  for any $d\mid N$, we can write 
$$
S_1=\sum\limits_{\substack{d\mid N\\(d,M)=1}}\varepsilon\left(\frac{N}{dM}\right)  \left(\frac{N}{dM}\right)^{k/2}.
$$
Replacing $dM$ by $d'$, we have
\begin{equation}\label{S1}
   S_1=\sum\limits_{\substack{d'\mid N\\M\mid d'}}\varepsilon\left(\frac{N}{d'}\right)  \left(\frac{N}{d'}\right)^{k/2}=\sum\limits_{\substack{d'\mid N\\M\mid d'}}\varepsilon(d')  d'^{k/2}=\sum\limits_{\substack{d\mid N\\M\mid d}}\varepsilon(d)  d^{k/2}. 
\end{equation}
 We now simplify the expression for $S_2$. After rearranging, we may assume that 
 $$M=p_1p_2\dots p_s$$ 
 for some $1\leq s<t$. So for any $d\mid N$ with $(d,M)>1$, we have $(d, p_1\cdots  p_s)= p_{i_1}\cdots p_{i_j}$, where $1\leq i_1< \ldots < i_j\leq s$ for some $j\le s$. Putting $\frac{d}{p_{i_1}\cdots p_{i_j}}=d'$ gives  
  \[
 S_2=	\sum\limits_{j=1}^{s}\sum\limits_{1\leq i_1< \ldots < i_j\leq s}	\sum\limits_{\substack{d'\mid N\\(d',M)=1}}\varepsilon\left(\frac{N}{M}\right) \varepsilon(p_{i_1}\cdots p_{i_j}d')\left( \frac{N}{\frac{d'M}{p_{i_1}\cdots p_{i_j}}}\right)^{k/2}.
 \] 
Since
  \[
 S_2=	\sum\limits_{j=1}^{s}\sum\limits_{1\leq i_1< \ldots < i_j\leq s}	\sum\limits_{\substack{d'\mid N\\(d',M)=1}}\varepsilon\left(\frac{N}{\frac{d'M}{p_{i_1}\cdots p_{i_j}}}\right) \left( \frac{N}{\frac{d'M}{p_{i_1}\cdots p_{i_j}}}\right)^{k/2}.
 \] 
Putting $\frac{d'M}{p_{i_1}\cdots p_{i_j}} = d$, we have
\[ 
 S_2 = \sum\limits_{j=1}^{s}\sum\limits_{1\leq i_1< \ldots < i_j\leq s}	\sum\limits_{\substack{d\mid N\\p_{i_1}\nmid d, \ldots p_{i_j}\nmid d}}\varepsilon\left({\frac{N}{d}}\right)\left( \frac{N}{{d}}\right)^{k/2}= \sum\limits_{j=1}^{s}\sum\limits_{1\leq i_1\leq \ldots \leq i_j\leq s}	\sum\limits_{\substack{d\mid N\\p_{i_1}\nmid d, \ldots p_{i_j}\nmid d}}\varepsilon(d)d^{k/2}.
\]
 Note that the set $\left\{d\mid N: p_{i_1}\nmid d, \ldots ,p_{i_j}\nmid d {\rm{\,\,for\,some \, }} 1\leq i_1< \ldots < i_j\leq s,\, j\in \{1, \ldots s\} \right\}$ is same as the set of all positive divisors $d$ of $N$ such that $M\nmid d$. Therefore 
\begin{equation}\label{S2}
S_2= \sum\limits_{\substack{d\mid N\\M\nmid d}}\varepsilon({ d})   {d}^{k/2}.
  \end{equation}
 Substituting \eqref{expression_A1}, \eqref{S1}, and \eqref{S2} in \eqref{expression_A}, we obtain 
\[
	-\frac{a_M(0)}{\left(\frac{B_k}{2k}\right)}\varepsilon\left({\frac{N}{M}}\right)\left(\frac{N}{M}\right)^{k/2} = \sum\limits_{\substack{d\mid N\\M\mid d}}\varepsilon({ d})   {d}^{k/2}  +  \sum\limits_{\substack{d\mid N\\M\nmid d}}\varepsilon({ d})   {d}^{k/2} = \sum\limits_{\substack{d\mid N}}\varepsilon({ d})   {d}^{k/2}.
\]
Since $ \sum\limits_{\substack{d\mid N}}\varepsilon({ d})   {d}^{k/2} = \prod\limits_{i=1}^{t}(1+\varepsilon(p_i) p_i^{k/2})$, this completes the proof.
\end{proof}

\begin{corollary}\label{mod l cusp form}
  With notation as before, let $\ell\mid \frac{B_k}{2k}\prod\limits_{i=1}^{t} (1+\varepsilon(p_i) p_i^{k/2})$.  Then for any $k\ge 2$, the $\oq{mod\,\ell}$ Eisenstein series $\overline{\mathcal{E}}_{k,N}^{(\varepsilon)}\in \mathcal{S}_k(N,\overline{\mathbb F}_\ell)$.
\end{corollary}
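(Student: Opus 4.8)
The plan is to detect cuspidality of $\overline{\mathcal{E}}_{k,N}^{(\varepsilon)}$ from the vanishing of its constant term at every cusp of $\Gamma_0(N)$. First I would record that, by the discussion in Section \ref{eisenstein series}, $\mathcal{E}_{k,N}^{(\varepsilon)}$ is a genuine holomorphic form in $\mathcal{M}_k^{(\varepsilon)}(N)$ — here the standing convention that $\varepsilon(p_j)=-1$ for some $j$ when $k=2$ is precisely what guarantees modularity — its higher Fourier coefficients at $\infty$ are rational integers, and its constant term at $\infty$ is $-\frac{B_k}{2k}\prod_{i=1}^{t}(1+\varepsilon(p_i)p_i^{k/2})$. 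The divisibility hypothesis says exactly that $\ell$ divides the numerator of this reduced fraction; in particular it is $\ell$-integral, so $\overline{\mathcal{E}}_{k,N}^{(\varepsilon)}$ is a well-defined element of $\mathcal{M}_k(N,\overline{\mathbb{F}}_\ell)$ (all Fourier coefficients being rational, its reduction lives in $\mathbb{F}_\ell$).

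Next I would use the set $\{1/M : M\mid N\}$ of non-equivalent cusps of $\Gamma_0(N)$ together with the preceding theorem, which gives the constant term of $\mathcal{E}_{k,N}^{(\varepsilon)}$ at $1/M$ as $-\frac{B_k}{2k}\,\varepsilon(N/M)\,(M/N)^{k/2}\prod_{i=1}^{t}(1+\varepsilon(p_i)p_i^{k/2})$. Since $\ell\nmid N$ (recall $\ell$ is distinct from the $p_i$), the rational number $(M/N)^{k/2}=(N/M)^{-k/2}$ has numerator $1$ and denominator prime to $\ell$, while $\varepsilon(N/M)\in\{\pm1\}$; hence the constant term at $1/M$ has the same $\ell$-adic valuation as the constant term at $\infty$, and is therefore divisible by $\ell$. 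Thus the constant term of $\mathcal{E}_{k,N}^{(\varepsilon)}$ reduces to $0$ modulo $\ell$ at each of the $2^t$ cusps of $\Gamma_0(N)$.

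Finally I would conclude by the standard criterion (see \cite{katz} and \cite[Chapter~10]{lang1}) that, for level $N$ prime to $\ell$, a mod $\ell$ modular form of weight $k$ is cuspidal if and only if its constant term vanishes at every cusp: because $\ell\nmid N$, the cusps of the relevant modular curve in characteristic $\ell$ correspond bijectively to those in characteristic $0$, and the constant term of $\overline{\mathcal{E}}_{k,N}^{(\varepsilon)}$ at each is the reduction of the corresponding constant term of $\mathcal{E}_{k,N}^{(\varepsilon)}$ computed above; hence $\overline{\mathcal{E}}_{k,N}^{(\varepsilon)}\in\mathcal{S}_k(N,\overline{\mathbb{F}}_\ell)$. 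I expect the only point that is not pure bookkeeping to be matching this geometric cuspidality with the definition of $\mathcal{S}_k(N,\overline{\mathbb{F}}_\ell)$ adopted here (reductions of characteristic-zero cusp forms), which rests on the $q$-expansion principle together with Carayol's lemma (Proposition \ref{carayol}), valid for $\ell\ge 5$; everything else is the valuation computation above.
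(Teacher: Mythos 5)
Your proposal is correct and follows the same route the paper intends: the preceding theorem gives the constant term of $\mathcal{E}_{k,N}^{(\varepsilon)}$ at each cusp $1/M$ as a unit (prime to $\ell$, since $\ell\nmid N$) times the constant term at $\infty$, so the divisibility hypothesis forces all constant terms to reduce to $0$ mod $\Lambda$, and geometric cuspidality of the mod $\ell$ form then follows. Your final remark correctly identifies the only non-bookkeeping point, namely reconciling the geometric notion of cuspidality with the paper's definition of $\mathcal{S}_k(N,\overline{\mathbb{F}}_\ell)$ as reductions of characteristic-zero cusp forms, which is precisely what Carayol's lemma (Proposition \ref{carayol}) furnishes.
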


\subsection{The action of $U_p$ operator}

We now examine the behaviour of the action of $U_p$ operator on   $\mathcal{E}_{k,N}^{(\varepsilon)}(z)$ for $k\ge 2$ and $p\mid N$. We will demonstrate that under certain restrictions on $\ell$, $\oq{mod\,\ell}$ reduction $\overline{\mathcal{E}}_{k,N}^{(\varepsilon)}(z)$  is an eigenfunction for $U_p$ operator.
\begin{proposition}\label{Actn of U_p}
 Let $\mathcal{E}_{k,N}^{(\varepsilon)}(z) = \sum\limits_{n\geq 0} a(n) e^{2 \pi inz}$ and
$
		U_{p} (\mathcal{E}_{k,N}^{(\varepsilon)}(z)) = \sum\limits_{n\geq 0} b(n) e^{2 \pi inz},
$ for a prime $p\mid N$. Then
\begin{equation}\label{U_p operator}
b(n)= \begin{cases}
    -\frac{B_k}{2k} \prod\limits_{i=1}^{t}(1+\varepsilon(p_i) p_i^{k/2}) & n=0\\
     (1+{p}^{k-1}+\varepsilon(p) p^{k/2})a(n) - (1+\varepsilon(p) p^{k/2})(1+\varepsilon(p) p^{k/2-1})&  n\ge 1. \\
    \hspace{125pt} \times \left(\sum\limits_{p\mid d,d\mid N} \varepsilon(d) d^{k/2} \sigma_{k-1} \left(\frac{n}{d}\right)\right)
\end{cases} 
\end{equation}

\end{proposition}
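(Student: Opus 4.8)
The plan is a direct computation with $q$-expansions. Recall that for the prime $p\mid N$ the operator $U_p$ acts on $q$-expansions by $U_p\bigl(\sum_{n\ge 0}c(n)e^{2\pi i n z}\bigr)=\sum_{n\ge 0}c(pn)e^{2\pi i n z}$, so that $b(n)=a(pn)$ for every $n\ge 0$. In particular $U_p$ fixes the constant term, giving $b(0)=a(0)=-\frac{B_k}{2k}\prod_{i=1}^{t}(1+\varepsilon(p_i)p_i^{k/2})$, which settles the case $n=0$; the whole content is therefore the evaluation of $a(pn)$ for $n\ge 1$.

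First I would split the divisor sum defining $\mathcal{E}_{k,N}^{(\varepsilon)}$ according to divisibility by $p$ and write $\mathcal{E}_{k,N}^{(\varepsilon)}=P+Q$, where
\[
P(z)=\sum_{\substack{d\mid N\\ p\nmid d}}\varepsilon(d)d^{k/2}E_k(dz),\qquad
Q(z)=\sum_{\substack{d\mid N\\ p\mid d}}\varepsilon(d)d^{k/2}E_k(dz)=\varepsilon(p)p^{k/2}P(pz),
\]
the last identity coming from the substitution $d\mapsto pd$ together with the multiplicativity of $\varepsilon$ and $\varepsilon(p)^2=1$. If $p(n)$ and $q(n)$ denote the $n$-th Fourier coefficients of $P$ and $Q$, then for $n\ge 1$ one has $a(n)=p(n)+q(n)$ and $q(n)=\sum_{p\mid d,\, d\mid N}\varepsilon(d)d^{k/2}\sigma_{k-1}(n/d)$, which is exactly the sum appearing in the statement.

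The only nontrivial input is the recursion $\sigma_{k-1}(pm)=(1+p^{k-1})\sigma_{k-1}(m)-p^{k-1}\sigma_{k-1}(m/p)$ for $m\ge 1$ (with $\sigma_{k-1}(m/p)=0$ when $p\nmid m$), which is equivalent to the relation $T_pE_k=(1+p^{k-1})E_k$ read on $q$-coefficients; comparing constant terms, it gives the $q$-series identity $U_pE_k=(1+p^{k-1})E_k-p^{k-1}E_k(pz)$, hence $U_p\bigl(E_k(dz)\bigr)=(1+p^{k-1})E_k(dz)-p^{k-1}E_k(dpz)$ for $p\nmid d$, while trivially $U_p\bigl(g(pz)\bigr)=g(z)$. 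Applying these term by term to $P$ and to $Q=\varepsilon(p)p^{k/2}P(pz)$ yields
\[
U_pP=(1+p^{k-1})P-p^{k-1}\varepsilon(p)p^{-k/2}Q,\qquad U_pQ=\varepsilon(p)p^{k/2}P,
\]
and therefore
\[
U_p\mathcal{E}_{k,N}^{(\varepsilon)}=\bigl(1+p^{k-1}+\varepsilon(p)p^{k/2}\bigr)P-\varepsilon(p)p^{k/2-1}Q
\]
as an identity of $q$-series (a quick check shows its constant term collapses back to $a(0)$, recovering the $n=0$ case once more).

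Finally, extracting the $n$-th coefficient for $n\ge 1$ and substituting $p(n)=a(n)-q(n)$ gives
\[
b(n)=\bigl(1+p^{k-1}+\varepsilon(p)p^{k/2}\bigr)a(n)-\Bigl[\bigl(1+p^{k-1}+\varepsilon(p)p^{k/2}\bigr)+\varepsilon(p)p^{k/2-1}\Bigr]q(n),
\]
and one checks, using $\varepsilon(p)^2=1$ and $p^{k/2}\cdot p^{k/2-1}=p^{k-1}$, that the bracket equals $(1+\varepsilon(p)p^{k/2})(1+\varepsilon(p)p^{k/2-1})$; this is precisely the asserted formula. I do not anticipate a genuine obstacle here: the argument is a bookkeeping of divisor sums, and the only points requiring care are the reindexing $d\mapsto pd$ (where multiplicativity and $\varepsilon(p)^2=1$ enter), the tracking of constant terms (which all collapse under $U_p$), and the short algebraic simplification at the end.
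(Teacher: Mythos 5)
Your proof is correct and follows essentially the same route as the paper: both split $\mathcal{E}_{k,N}^{(\varepsilon)}$ into the subsums over $d\mid N$ with $p\nmid d$ and $p\mid d$, both exploit that $E_k$ is a $T_p$-eigenform with eigenvalue $1+p^{k-1}$ together with $U_p B_p=\mathrm{id}$ (you phrase this via the $\sigma_{k-1}$ recursion and the $q$-series identity $U_pE_k=(1+p^{k-1})E_k-p^{k-1}E_k(pz)$, while the paper writes $U_p=T_p-p^{k-1}B_p$ on the first subsum), and both reindex $d\mapsto pd$ and recombine. The bookkeeping and final algebraic simplification are exactly right.
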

\begin{proof}
We  rewrite the expression for $\mathcal{E}_{k,N}^{(\varepsilon)}(z)$  as
\[
     \mathcal{E}_{k,N}^{(\varepsilon)}(z) = \sum\limits_{p\nmid d, d\mid N} \varepsilon(d)d^{k/2} B_dE_k(z) + \sum\limits_{p\mid d, d\mid N} \varepsilon(d)d^{k/2} B_d E_k(z).
    \]
The first sum on the RHS of the above equation is a  linear combination of forms of level $d$ with $p\nmid d$ whereas  the second sum is a form of level divisible by $p$. On applying $U_p$ operator on both sides, we obtain
\begin{equation}\label{U_p exprssn1}
    U_p(\mathcal{E}_{k,N}^{(\varepsilon)}(z)) = \sum\limits_{p\nmid d, d\mid N} \varepsilon(d)d^{k/2} (T_p - p^{k-1}B_p) B_d E_k(z) + \sum\limits_{p\mid d, d\mid N} \varepsilon(d)d^{k/2}U_p B_d E_k(z).
\end{equation}

Let us denote the sums on RHS of the above equation by $S_1$ and $S_2$, respectively. Then 
\[
S_1 = \sum\limits_{p\nmid d, d\mid N} \varepsilon(d)d^{k/2} T_p B_d (E_k(z)) - p^{k-1} \sum\limits_{p\nmid d, d\mid N} \varepsilon(d)d^{k/2}B_p B_d (E_k(z)).
\]
Since $E_k(z)$ is an eigenfunction of the Hecke operator $T_p$ with eigenvalue  $1+p^{k-1}$ and for $(p,d)=1$, the operators $T_p$ and $B_d$ commute, we have
\[
S_1 = \sum\limits_{p\nmid d, d\mid N} \varepsilon(d)d^{k/2} (1+p^{k-1}) B_d  E_k(z)  -  p^{k-1} \sum\limits_{p\nmid d, d\mid N} \varepsilon(d)d^{k/2}E_k(dpz). 
\]

Replacing $dp$ by $d'$ in the second sum of the RHS, we obtain
\begin{equation}\label{U_p 1st sum}
S_1 = (1+p^{k-1}) \sum\limits_{p\nmid d', d'\mid N} \varepsilon(d')d'^{k/2}  E_k(d'z) -  \varepsilon(p) p^{k/2-1} \sum\limits_{p\mid d', d'\mid N} \varepsilon(d')d'^{k/2}E_k(d'z).
\end{equation}

Since the composition $U_p B_p$ becomes identity on $\mathcal{S}_{k}(N)$, we have 
\[
    S_2 = \sum\limits_{p\mid d, d\mid N} \varepsilon(d)d^{k/2} B_{d/p} E_k(z) =  \sum\limits_{p\mid d, d\mid N} \varepsilon(d)d^{k/2}E_k(dz/p).
\]
Replacing $d/p$ by $d'$ yields  
\begin{equation}\label{U_p 2nd sum}
  S_2 =  \varepsilon(p) p^{k/2} \sum\limits_{p\nmid d', d'\mid N} \varepsilon(d')d'^{k/2}E_k(d'z).
\end{equation}

Putting the values of $S_1,S_2$ from \eqref{U_p 1st sum}, \eqref{U_p 2nd sum} respectively in \eqref{U_p exprssn1} and writing $d$ for the dummy variable $d'$ gives
\[
    U_p(\mathcal{E}_{k,N}^{(\varepsilon)}(z)) =  (1+p^{k-1}+\varepsilon(p) p^{k/2})\!\!\!\! \sum\limits_{p\nmid d, d\mid N} \varepsilon(d)d^{k/2}  E_k(dz) -  \varepsilon(p) p^{k/2-1} \sum\limits_{p\mid d, d\mid N} \varepsilon(d)d^{k/2}E_k(dz).
\]

Adding and subtracting $(1+p^{k-1}+\varepsilon(p) p^{k/2}) \sum\limits_{p\mid d, d\mid N} \varepsilon(d)d^{k/2}  E_k(dz)$ on RHS, we get
\begin{align}\label{eigenform for U_p}
U_p(\mathcal{E}_{k,N}^{(\varepsilon)}(z)) = (1+p^{k-1}+\varepsilon(p) p^{k/2}) \mathcal{E}_{k,N}^{(\varepsilon)}(z) & -(1+\varepsilon(p) p^{k/2})(1+\varepsilon(p) p^{k/2-1})\\
&\times \left(\sum\limits_{p\mid d, d\mid N} \!\!\!\!\varepsilon(d)d^{k/2} E_k(dz)\right). \nonumber
    \end{align}
Comparing the Fourier coefficients on both sides, we obtain the desired result.
\end{proof}
 
\begin{corollary}\label{Up eigenfunction}
   Let  $k\ge 2$ and $\ell$ be a prime such that $\ell\mid (1+\varepsilon(p) p^{k/2})(1+\varepsilon(p) p^{k/2-1})$  for a prime $p\mid N$. Then the $\oq{mod\,\ell}$ modular form $\overline{\mathcal{E}}_{k,N}^{(\varepsilon)}(z)$ is an eigenfunction of $U_p$ operator with eigenvalue $-\varepsilon(p) p^{k/2-1}$.
\end{corollary}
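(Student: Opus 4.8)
The plan is to apply Proposition \ref{Actn of U_p} directly and then reduce modulo $\ell$. First, I would recall from Corollary \ref{mod l cusp form} that the divisibility hypothesis $\ell \mid (1+\varepsilon(p)p^{k/2})(1+\varepsilon(p)p^{k/2-1})$ is not quite enough on its own to place $\overline{\mathcal E}_{k,N}^{(\varepsilon)}$ in $\mathcal S_k(N,\overline{\mathbb F}_\ell)$, so I would work with the $n \ge 1$ Fourier coefficients throughout, which is where the eigenfunction property is visible; for $n=0$ the constant term is handled separately (or one simply checks $b(0) = -\varepsilon(p)p^{k/2-1}a(0)$ modulo $\ell$ using that $\ell \mid (1+\varepsilon(p)p^{k/2})$, since $-\varepsilon(p)p^{k/2-1}\cdot a(0)$ and $a(0)$ both carry the factor $\prod_i(1+\varepsilon(p_i)p_i^{k/2})$ up to the relevant unit). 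The substantive point is the $n \ge 1$ branch of formula \eqref{U_p operator}.

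For $n \ge 1$, Proposition \ref{Actn of U_p} gives
\[
b(n) = (1+p^{k-1}+\varepsilon(p)p^{k/2})a(n) - (1+\varepsilon(p)p^{k/2})(1+\varepsilon(p)p^{k/2-1})\left(\sum_{p\mid d,\, d\mid N}\varepsilon(d)d^{k/2}\sigma_{k-1}\!\left(\tfrac nd\right)\right).
\]
By hypothesis $\ell \mid (1+\varepsilon(p)p^{k/2})(1+\varepsilon(p)p^{k/2-1})$, so the entire second term vanishes modulo $\ell$, regardless of the value of the inner sum. Hence $b(n) \equiv (1+p^{k-1}+\varepsilon(p)p^{k/2})a(n) \pmod{\Lambda}$ for every $n \ge 1$. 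Now I would simplify the scalar: modulo $\ell$ we have $1+p^{k-1}+\varepsilon(p)p^{k/2} = (1+\varepsilon(p)p^{k/2}) + p^{k-1} \equiv p^{k-1} \pmod{\ell}$ if $\ell \mid (1+\varepsilon(p)p^{k/2})$; but more robustly, writing $1+p^{k-1}+\varepsilon(p)p^{k/2} = (1+\varepsilon(p)p^{k/2-1}) + \varepsilon(p)p^{k/2-1}(p^{k/2} - \varepsilon(p)) \cdot(\text{rearrange})$ — actually the cleanest identity is
\[
1+p^{k-1}+\varepsilon(p)p^{k/2} = -\varepsilon(p)p^{k/2-1}\bigl(-\varepsilon(p)p^{1-k/2} - \varepsilon(p)p^{k/2} - 1\bigr),
\]
which is awkward; instead I would just note $1+p^{k-1}+\varepsilon(p)p^{k/2} + \varepsilon(p)p^{k/2-1} = (1+\varepsilon(p)p^{k/2})(1+\varepsilon(p)p^{k/2-1})/1$ — let me verify: $(1+\varepsilon(p)p^{k/2})(1+\varepsilon(p)p^{k/2-1}) = 1 + \varepsilon(p)p^{k/2-1} + \varepsilon(p)p^{k/2} + \varepsilon(p)^2 p^{k-1} = 1+\varepsilon(p)p^{k/2-1}+\varepsilon(p)p^{k/2}+p^{k-1}$ (using $\varepsilon(p)^2=1$). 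Therefore
\[
1+p^{k-1}+\varepsilon(p)p^{k/2} = (1+\varepsilon(p)p^{k/2})(1+\varepsilon(p)p^{k/2-1}) - \varepsilon(p)p^{k/2-1}.
\]
Since the first term on the right is divisible by $\ell$, we conclude $1+p^{k-1}+\varepsilon(p)p^{k/2} \equiv -\varepsilon(p)p^{k/2-1} \pmod{\ell}$, and hence $b(n) \equiv -\varepsilon(p)p^{k/2-1}\,a(n) \pmod{\Lambda}$ for all $n \ge 1$. This is precisely the statement that $\overline{\mathcal E}_{k,N}^{(\varepsilon)}$ is an eigenfunction of $U_p$ with eigenvalue $-\varepsilon(p)p^{k/2-1}$ on positive-index coefficients; combined with the matching of constant terms (which follows from the same congruence applied to $a(0)$, or simply from $\overline{\mathcal E}_{k,N}^{(\varepsilon)}$ being a cusp form when the extra Bernoulli hypothesis of Corollary \ref{mod l cusp form} holds), the corollary follows. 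I anticipate no real obstacle here — the only thing to be careful about is the algebraic identity for the $U_p$-eigenvalue scalar and, if one wants the statement purely within $\mathcal S_k(N,\overline{\mathbb F}_\ell)$, invoking Corollary \ref{mod l cusp form} for the constant term; otherwise the proof is a direct corollary of Proposition \ref{Actn of U_p}.
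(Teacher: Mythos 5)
Your approach is correct and matches the paper's intent: the corollary is stated without proof because it is a direct reduction of Proposition \ref{Actn of U_p} (or of the display \eqref{eigenform for U_p}) modulo $\ell$, and you correctly isolate the needed algebraic identity
\[
(1+\varepsilon(p)p^{k/2})(1+\varepsilon(p)p^{k/2-1}) = 1 + p^{k-1} + \varepsilon(p)p^{k/2} + \varepsilon(p)p^{k/2-1},
\]
so that the surviving scalar $1+p^{k-1}+\varepsilon(p)p^{k/2}$ is $\equiv -\varepsilon(p)p^{k/2-1}\pmod{\ell}$.

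The one rough spot is the constant term. Your parenthetical argument invokes ``$\ell\mid(1+\varepsilon(p)p^{k/2})$,'' which is stronger than the hypothesis $\ell\mid(1+\varepsilon(p)p^{k/2})(1+\varepsilon(p)p^{k/2-1})$ and hence leaves the other case (where $\ell\mid(1+\varepsilon(p)p^{k/2-1})$ but $\ell\nmid(1+\varepsilon(p)p^{k/2})$) unaddressed. The clean fix is to note from Proposition \ref{Actn of U_p} that $b(0)=a(0)$, so
\[
b(0) - \bigl(-\varepsilon(p)p^{k/2-1}\bigr)a(0) = (1+\varepsilon(p)p^{k/2-1})\,a(0),
\]
and since $a(0)$ already carries the factor $(1+\varepsilon(p)p^{k/2})$ inside the product $\prod_i(1+\varepsilon(p_i)p_i^{k/2})$, the right-hand side is divisible by the full product $(1+\varepsilon(p)p^{k/2})(1+\varepsilon(p)p^{k/2-1})$ and hence by $\ell$. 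Even simpler: equation \eqref{eigenform for U_p} is an identity of $q$-expansions valid at every index $n\ge 0$, so reducing it modulo $\ell$ directly gives the eigenfunction property without any case split at $n=0$.
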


\section{Lifting to an eigenform}\label{lift to eigenform}

\begin{theorem}\label{existence of eigenform}
Let $k\geq 2$ be even and $\ell, p_1,p_2,\dots,p_t$ be distinct primes such that $\ell \geq 5$. Let $N=p_1\dots p_t$ and $\varepsilon$ be an Atkin-Lehner eigensystem for $\Gamma_0(N)$.  For $k=2$, we also assume that $\varepsilon(p_i)=-1$ for some  $i$. If 
$$\ell \mid \frac{B_k}{2k}  \prod\limits_{i=1}^{t}(1+\varepsilon(p_i) p_i^{k/2}) \quad {\rm and} \quad \ell\mid (1+\varepsilon(p_i) p_i^{k/2})(1+\varepsilon(p_i) p_i^{k/2-1})~{\mathrm{  for ~each }~1\leq i\leq t}
$$
then there exists an eigenform $f \in \mathcal{S}_k(N)$  and a prime ideal $\Lambda$ over $\ell$ in $K_f$ such that 
$$f(z) \equiv \mathcal{E}_{k,N}^{(\varepsilon)}(z)\modlam.$$
\end{theorem}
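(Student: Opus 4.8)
The plan is to use the divisibility hypotheses together with the computations of the preceding section to show that the $\oq{mod\,\ell}$ reduction $\overline{\mathcal{E}}_{k,N}^{(\varepsilon)}$ is a genuine $\oq{mod\,\ell}$ cusp form that is simultaneously an eigenform for every Hecke operator $T_q$ ($q\nmid N$) and $U_{p_i}$ ($1\le i\le t$), and then lift it back to characteristic zero. First I would invoke Corollary \ref{mod l cusp form}: since $\ell \mid \frac{B_k}{2k}\prod_i(1+\varepsilon(p_i)p_i^{k/2})$, the constant term of $\mathcal{E}_{k,N}^{(\varepsilon)}$ at \emph{every} cusp $1/M$ of $\Gamma_0(N)$ is divisible by $\Lambda$ (for any prime $\Lambda \mid \ell$ in a large enough number field), so $\overline{\mathcal{E}}_{k,N}^{(\varepsilon)} \in \mathcal{S}_k(N,\overline{\mathbb{F}}_\ell)$. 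Next, from Section \ref{eisenstein series}, $\mathcal{E}_{k,N}^{(\varepsilon)}$ is already an eigenfunction of $T_q$ with eigenvalue $1+q^{k-1}$ for all $q\nmid N$, and by Corollary \ref{Up eigenfunction}, the hypothesis $\ell\mid (1+\varepsilon(p_i)p_i^{k/2})(1+\varepsilon(p_i)p_i^{k/2-1})$ for each $i$ forces $\overline{\mathcal{E}}_{k,N}^{(\varepsilon)}$ to be a $U_{p_i}$-eigenform with eigenvalue $-\varepsilon(p_i)p_i^{k/2-1}$. Thus $\overline{\mathcal{E}}_{k,N}^{(\varepsilon)}$ is a nonzero $\oq{mod\,\ell}$ eigenform for the full Hecke algebra $\mathbb{T}_k(N)$ acting on $\mathcal{S}_k(N,\overline{\mathbb{F}}_\ell)$.

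The remaining task is to produce a characteristic-zero eigenform $f\in\mathcal{S}_k(N)$ with $f\equiv\mathcal{E}_{k,N}^{(\varepsilon)}\pmod\Lambda$. For this I would first apply Carayol's lemma (Proposition \ref{carayol}), valid since $\ell\ge 5$ and $\ell\nmid N$, to lift $\overline{\mathcal{E}}_{k,N}^{(\varepsilon)}$ to some $\tilde{E}\in\mathcal{S}_k(N,\overline{\mathbb{Z}}_\ell)$ reducing to it mod $\Lambda$. Then the $\oq{mod\,\ell}$ eigenform $\overline{\mathcal{E}}_{k,N}^{(\varepsilon)}$ determines a $\overline{\mathbb{F}}_\ell$-algebra homomorphism $\mathbb{T}_k(N)\to\overline{\mathbb{F}}_\ell$ sending $T_q\mapsto 1+q^{k-1}$ and $U_{p_i}\mapsto -\varepsilon(p_i)p_i^{k/2-1}$; combined with the existence of $\tilde E$, the Deligne–Serre lifting lemma (Proposition \ref{ds}) yields a true eigenform $g\in\mathcal{S}_k(N)$ and a prime $\Lambda$ over $\ell$ in $K_g$ with $a_g(n)\equiv\psi(T_n)\pmod\Lambda$ for all $n\ge1$, i.e. $a_g(n)\equiv a(n)\pmod\Lambda$ for the Fourier coefficients $a(n)$ of $\mathcal{E}_{k,N}^{(\varepsilon)}$, for all $n\ge1$.

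Finally I must upgrade the congruence on positive-index coefficients to a congruence of $q$-expansions including the constant term. If $k\not\equiv0\pmod{\ell-1}$, Proposition \ref{lang} does this immediately: $g$ and $\mathcal{E}_{k,N}^{(\varepsilon)}$ agree mod $\Lambda$ on all $n\ge1$, hence on $n=0$ as well, giving $g\equiv\mathcal{E}_{k,N}^{(\varepsilon)}\pmod\Lambda$. For the borderline case $k\equiv0\pmod{\ell-1}$ (which, given $\ell\ge\max\{5,k-1\}$ elsewhere in the paper, is quite restrictive but must still be handled here since this theorem only assumes $\ell\ge5$), the constant term congruence is already supplied directly: the constant term of $\mathcal{E}_{k,N}^{(\varepsilon)}$ is $\equiv0\pmod\Lambda$ by the first displayed hypothesis, and $g$ being a cusp form has constant term $0$, so again the two $q$-expansions are congruent mod $\Lambda$. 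Taking $f:=g$ completes the proof. The main obstacle I anticipate is bookkeeping rather than conceptual: one must be careful that the field of coefficients is taken large enough to contain a prime $\Lambda$ over $\ell$ for which all the reductions make sense simultaneously, and that the Hecke eigensystem of $\overline{\mathcal{E}}_{k,N}^{(\varepsilon)}$ is correctly matched with that of $g$ across the Carayol and Deligne–Serre steps — in particular that the $U_{p_i}$-eigenvalues, not merely the $T_q$-eigenvalues, are transported, which is why Corollary \ref{Up eigenfunction} is essential.
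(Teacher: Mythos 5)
Your proposal follows the paper's argument essentially step for step: use Corollaries \ref{mod l cusp form} and \ref{Up eigenfunction} to show $\overline{\mathcal{E}}_{k,N}^{(\varepsilon)}$ is a $\oq{mod}\,\ell$ cuspidal eigenform, lift via Carayol's lemma (Proposition \ref{carayol}), pass to a genuine eigenform via Deligne--Serre (Proposition \ref{ds}), and then match $q$-expansions. Your explicit treatment of the constant term (noting it is $\equiv 0\pmod\Lambda$ on both sides since $g$ is cuspidal and the hypothesis forces the Eisenstein constant term into $\Lambda$, so Proposition \ref{lang} is not even needed) is a small tidying-up of a point the paper leaves implicit, but it is not a different route.
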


\begin{proof}

Under the given assumptions, from Corollary \ref{mod l cusp form} and 
 Corollary \ref{Up eigenfunction}, we see that $$\overline{\mathcal{E}}_{k,N}^{(\varepsilon)}(z)\in \mathcal{S}_k(N,\overline{\mathbb{F}}_\ell), $$ 
is a mod$\,\ell$ eigenfunction of $U_p$ for $p\mid N$.
Hence, ${\mathcal{E}}_{k,N}^{(\varepsilon)}(z)$ is a mod$\,\ell$ eigenform of level $N$ with eigensystem $\varepsilon$. 
From Proposition \ref{carayol},  there exists a form $g\in \mathcal{S}_k(N, \overline{\mathbb Z}_{\ell})$ such that
	\begin{equation}\label{g-E_k}
	    g(z)\equiv \mathcal{E}_{k,N}^{(\varepsilon)}(z)\modl.
	\end{equation}
But $g\in \mathcal{S}_k(N,\overline{\bb{Z}}_{\ell})$ is not necessarily an eigenform however, it is congruent to $\mathcal{E}_{k,N}^{(\varepsilon)}(z)$  modulo $\ell$ which itself is a $\oq{mod\,\ell}$ eigenform. Hence, by Proposition \ref{ds}, there exist an eigenform $h\in \mathcal{S}_k(N,\mathcal{O}_{{K}_h})$ and a prime ideal $\Lambda_h$ over $\ell$ such that the extension $\bb{Q}_{\ell}\subset K_g\subset K_h$ is finite and $h(z)\equiv \mathcal{E}_{k,N}^{(\varepsilon)}(z)\pmod{\Lambda_h}$.  We know that every finite extension of $\bb{Q}_\ell$ is a completion of some number field at a prime lying over $\ell$, hence the eigenform $h$ necessarily arises from $f(z)\in \mathcal{S}_k(N)$ via the embedding of a number field $K\hookrightarrow K_h$ such that
\[
            f(z)\equiv \mathcal{E}_{k,N}^{(\varepsilon)}(z)\modlam.
\]
where $\Lambda$ is a prime ideal over $\ell$ in $K$.
\end{proof}

 \section{Proof of Theorem \ref{newform_dp}}\label{proof_main}
 
From the assumptions on $\ell$, we observe that for weight $k\ge 2$,
$$\ell \mid \frac{B_k}{2k}  \prod\limits_{q\mid Np}(1+\varepsilon(q) q^{k/2}) \quad {\rm and} \quad \ell\mid (1+\varepsilon(q) q^{k/2})(1+\varepsilon(q) q^{k/2-1})~{\mathrm{  for ~each }~q\mid Np},
$$
meaning that $\ell$ satisfies the hypothesis of Theorem \ref{existence of eigenform}. Therefore, there exists a Hecke eigenform $g\in \mathcal{S}_k(Np)$  such that
\begin{equation}\label{eq:8}
		g(z)\equiv \mathcal{E}_{k,Np}^{(\varepsilon)}(z) \modlam,
\end{equation}
where $\Lambda$ is a prime ideal
above $\ell$ in the compositum of coefficient fields of all normalized eigenforms in $\mathcal{S}_k(d)$, for $d\mid Np$. From Chebotarev density theorem, it is evident that 
\begin{equation}\label{iso}
   {\overline\rho_{g,\Lambda}}\simeq 1\oplus \overline \chi_\ell^{k-1}.
\end{equation}
We remark that for $k=2$, our assumptions compel us to take $\varepsilon(p)=1$ and $\varepsilon(q)=-1$.
Now we consider the following two cases:\\
{\bf Case (i):  $g$ is a newform of level $Np$.}   
 We show that if $\delta$ is the Atkin-Lehner eigensystem of $g$, then $\delta=\varepsilon$. For any prime  $q\mid Np$, we know that $a_g(q) = -\delta(q) q^{k/2-1}$. Comparing $q$th Fourier coefficients on both the sides in \eqref{eq:8} yields 
\[
      -\delta(q) q^{k/2-1}\equiv 1+q^{k-1}+\varepsilon(q) q^{k/2}\equiv -\varepsilon(q) q^{k/2-1} \modlam. 
\]
Hence, $\ell\mid (\delta(q)-\varepsilon(q))q^{k/2-1}$. But $\ell\nmid q$, so we have $\delta(q)=\varepsilon(q)$ and $g\in \mathcal{S}_k^{(\varepsilon)}(Np)$ is the desired newform.\\
{\bf Case (ii): $g$ is not a newform of level $Np$.} We first claim that $g$ can not arise from a newform of level dividing $N$.
On the contrary, assume that $g$ arises from a newform $h\in \mathcal{S}_k(M)$ for some $M\mid N$.

We first consider the case when $M=1$. We need to assume that $k\ge 12$ and $k\neq 14$ because there are no newforms of other weights and of level $1$.
 Let $h$ be a newform of level $1$ such that   
\begin{equation}\label{gh level 1}
     g(z) = h(z) + \sum\limits_{1<d\mid Np} \alpha(d) h(dz),
\end{equation}
   where $\alpha(d)\in \mathbb{C}$ for each $d$. Thus  \eqref{iso} gives that  the $\oq{mod}\,\ell$ Galois representation ${\overline{\rho}_{h,\Lambda}}$ is isomorphic to $1\oplus \overline{\chi}_{\ell}^{k-1}$. Both the representations are unramified outside $ \ell$, so
   \[
        a_h(q) \equiv 1+q^{k-1}\modlam, {\rm ~for ~any~prime~} q\mid Np. 
   \]
From \eqref{gh level 1} and \eqref{eq:8},  we obtain  $a_h(q) = a_g(q) \equiv 1 + q^{k-1}\modlam$ for $q\nmid Np$. Hence, from Proposition \ref{lang} we deduce that
\[
    h(z) \equiv E_{k}(z)\modlam,
\] inferring that $\ell\mid \frac{B_k}{2k}$, which is a contradiction.

For $M >1$, we have
\begin{equation}\label{gh}
		g(z) = h(z) + \sum\limits_{1<d\mid \frac{Np}{M}} \alpha(d) h(dz),
\end{equation}
where $\alpha(d)\in \mathbb{C}$. It is clear that $
a_h(q)=a_g(q)$ for any prime $q\mid M\ell$. So \eqref{eq:8} gives
\begin{align*}
    a_h(q)&\equiv 1+q^{k-1}+\varepsilon(q)q^{k/2}\modlam {\rm ~for ~each~prime~} q\mid M;~ {\rm and}\\
    a_h(\ell)&\equiv 1+\ell^{k-1} \modlam.
\end{align*}
Furthermore, because $\overline{\rho}_{h,\Lambda} \simeq 1\oplus \overline \chi_\ell^{k-1}$ and both these representations are unramified outside $M\ell$, we have
\[
		a_h(q)\equiv 1+q^{k-1}\modlam, {\rm ~for ~any~prime~} q\nmid M\ell.
\]
Combining the last three congruences and applying Proposition \ref{lang}, we conclude that
\[
		h(z) \equiv \mathcal{E}_{k,M}^{(\varepsilon)}(z)\modlam,
\]
implying that $\ell \mid  \frac{B_k}{2k} \prod_{q\mid M}(1+\varepsilon(q) q^{k/2})$. Therefore,  $\ell\mid (1+\varepsilon(q) q^{k/2})$ for some $q\mid M$. But we know that $\ell\mid (1+\varepsilon(q) q^{k/2-1})$ which gives $\ell\mid \varepsilon(q) q^{k/2-1}(1-q)$. This is not possible because $\ell\nmid \phi(N)$. \\
 Hence, $g$ must arise from a newform of level $p$ or of level $dp$ for some proper divisor $d$ of $N$.\\
 {\bf Subcase (a): The form $g$ arises from a newform $f\in \mathcal S_k^{(\delta)}(dp)$, for some $1<d\mid N$.} Clearly, we have $\overline{\rho}_{f,\Lambda} \simeq 1\oplus \overline \chi_\ell^{k-1}$. As proved in {\bf Case (i)}, we can easily show that $\delta(q)=\varepsilon(q)$, for any $q\mid dp$. Considering the congruences between Fourier coefficients, we have
 $$f(z) \equiv \mathcal{E}_{k,dp}^{(\varepsilon)}(z)\modlam$$ 
 which completes the proof in this case.\\
  {\bf Subcase (b): The form $g$ arises from a newform $h\in \mathcal S_k(p)$.} In this case, we have
 \begin{equation}\label{gh2}
     g(z) = h(z) + \sum_{1<d\mid N} \alpha(d) h(dz),
 \end{equation}
 for some $\alpha(d)\in \bb{C}$ giving
 $\overline{\rho}_{h,\Lambda} \simeq 1\oplus \overline \chi_\ell^{k-1}$.
 Since both representations are  unramified outside $p\ell$, we have 
$$a_h(p_{r})\equiv 1+p_{r}^{k-1} \modlam.$$ 
The assumption $\ell\mid (1+\varepsilon(p_r) p_r^{k/2-1})$ gives 
\[
     a_h(p_r)\equiv -\varepsilon(p_r) p_r^{k/2-1}(1+p_r)  \modlam.
\]
    Thus, the newform $h(z)$ satisfies the level-raising condition at $p_r$. Applying Theorem \ref{level raising theorem}, 
    we obtain a newform $f$ either in $\mathcal{S}^{(\delta)}_k(p_r)$ or in $\mathcal{S}^{(\delta)}_k(pp_r)$, where $\delta$ denotes an Atkin-Lehner eigensystem of $f$ of level $p_r$ or $pp_r$ such that
    \begin{equation}\label{gh3}
    a_f(q)\equiv a_h(q) \modlam  \mathrm{~for ~all} ~q\nmid p_rp,
    \end{equation}
and in particular,
\begin{equation}\label{gh4}
{\overline\rho_{f,\Lambda}}\simeq 1\oplus \overline \chi_\ell^{k-1}.
\end{equation}
First, we will show that $f$ can not be a newform of level $p_r$. This is clearly true if $k=2$ using \cite[Proposition 5.12]{maz} because $\ell \nmid (p_r-1)$ which is also mentioned in Remark \ref{k=2}. This completes the proof of the assertion stated in Remark \ref{k=2}. So assume that $k\ge 4$. If possible, let   $f\in \mathcal{S}^{(\delta)}_k(p_r)$ be a newform. 
Applying Theorem \ref{langland} and using \eqref{gh4}, we obtain
\[
            \oq{tr}(\overline{\rho}_f|^{}_{D_{p_r}}(\oq{Frob}_{p_r}))\equiv (p_r^{k/2}+p_r^{k/2-1})\frac{a_f(p_r)}{p_r^{k/2-1}}  \equiv 1+p_r^{k-1}\modlam
    \]
    which gives 
    $$
    -\delta(p_r)p_r^{k/2-1}(1+p_r) \equiv -\varepsilon(p_r)p_r^{k/2-1}(1+p_r)\modlam.
    $$
Therefore, $\ell\mid \left( \delta(p_r)-\varepsilon(p_r)\right) p_r^{k/2-1}(1+p_r)$. 
Since $\ell\nmid p_r(1+p_r),$ we have $\delta(p_r)=\varepsilon(p_r)$ so that
$$
a_f(p_r)\equiv 1+p_r^{k-1}+\varepsilon(p_r)p_r^{k/2}\modlam.
$$
Also, from \eqref{gh3} and \eqref{gh4}, we obtain
    $$a_f(q)\equiv 1+q^{k-1}\modlam \mathrm {~for ~all~} q\nmid p_r.
    $$
Combining the last two congruences and invoking Theorem \ref{lang} gives
 $f(z)\equiv \mathcal{E}_{k,p_r}^{(\varepsilon)}(z)\modlam$ and comparing the constant term, we obtain
 $$    \ell \mid (1+\varepsilon(p_r)p_r^{k/2}).$$
  But  $\ell \mid (1+\varepsilon(p_r)p_r^{k/2-1})$, therefore  $\ell\mid p_r(p_r-1)$ which is a contradiction.

 Summarising the above discussion, we have a newform $f\in \mathcal{S}^{(\delta)}_k(pp_r)$ satisfying \eqref{gh3} and \eqref{gh4}. Applying Theorem \ref{langland} and following the same argument as before,  the fact that $\ell\nmid (1+p_r)$ gives $\delta(p_r)=\varepsilon(p_r)$.

 Next, we claim that $\delta(p) = \varepsilon(p)$. On the contrary, let us assume that $\delta(p) =- \varepsilon(p)$. In this case $a_f(p)  = \varepsilon(p)p^{k/2-1}$. Therefore using  Theorem \ref{langland} and  \eqref{gh4}, we obtain
 $$
        \left(p^{k/2}+p^{k/2-1}\right) \frac{\varepsilon(p)p^{k/2-1}}{p^{k/2-1}} \equiv 1+p^{k-1} \modlam,
 $$ which in turn implies
  $1+p^{k-1}-\varepsilon(p)p^{k/2}-\varepsilon(p)p^{k/2-1}\equiv 0\modl$, and consequently 
$$
        a_f(p) = \varepsilon(p)p^{k/2-1}\equiv 1+p^{k-1}-\varepsilon(p)p^{k/2-1}\modlam.
$$
Using \eqref{gh3} and \eqref{gh4}, we obtain that $f(z)\equiv \mathcal{E}_{k,pp_r}^{(\delta)}(z)\modlam$. Again applying Theorem \ref{lang} gives 
$$
            \ell\mid \frac{B_k}{2k}\left(1+\varepsilon(p_r)p_r^{k/2}\right)\left(1-\varepsilon(p)p^{k/2}\right),
$$
which is a contradiction to our assumptions on $\ell$. Hence $\delta(p) = \varepsilon(p)$.\\
Finally, by considering congruences among Fourier coefficients, we obtain
 $$f(z) \equiv \mathcal{E}_{k,pp_r}^{(\varepsilon)}(z)\modlam$$
which completes the proof.



 \section{Proof of Theorem \ref{density}}
Define the sets of primes $\mathcal{P}$ and $\mathcal{Q}$ as follows:
\[
\mathcal{P} = \{p {\rm~prime}:  ~p\equiv -1\modl\} \qquad \mathrm{and}\quad \mathcal{Q} = \{q {\rm~prime}:  ~q\not\equiv \pm{1}\modl\}.
\]
By Dirichlet's theorem on primes in arithmetic progressions, the densities of $\mathcal P$ and  $\mathcal Q$ are   $ \frac{1}{\ell}$ and $\frac{\ell-2}{\ell},$ respectively. 
Since  $k = 2$ or $\ell+1$, we can take $k = n(\ell-1)+2$, where $n=0$ or $1$. By Kummer's congruence, we have $\frac{B_{k}}{k}\equiv \frac{B_2}{2}\modl$ and thus $   \ell\nmid \frac{B_{k}}{2k}.$

For any prime $p\in \mathcal{P}$ and  $q\in \mathcal{Q}$, the following congruences hold.
\begin{align*}
    p^{k/2}&\equiv \begin{cases}
        -1\modl &{\rm ~~if~} \ell\equiv 1\pmod{4}\\
        (-1)^{n+1} \modl &{\rm ~~if~~} \ell\equiv 3\pmod{4};
    \end{cases} \\ 
     q^{k/2-1} &\equiv \begin{cases}
    ~1 \modl & \quad {\rm ~~if~~} q {\rm~ is~a~quadratic ~residue~modulo~} \ell\\
    (-1)^{n}\modl &\quad  {~\rm otherwise}.
    \end{cases}
\end{align*}
Let $\varepsilon$ be the Atkin-Lehner eigensystem $\varepsilon$ for level $pq$ defined by 
$$ \varepsilon(p)=-p^{k/2}\modl \quad {\rm and} \quad \varepsilon(q) = -q^{k/2-1}\modl.
$$
For any $p\in \mathcal{P}$ and $q\in \mathcal{Q}$,  we have the following
\begin{equation*}\label{condition on p}
    \ell\nmid \frac{B_k}{2k}(p-1) (q^2-1), \quad  \ell\mid (1+\varepsilon(p)p^{k/2}), \quad \mathrm{and} \quad \ell\mid (1+\varepsilon(q)q^{k/2-1}).
\end{equation*}

By Theorem \ref{result for N=p}, there exists a newform $f\in \mathcal{S}_{k}^{(\varepsilon)}(pq)$  and a prime ideal $\Lambda$ over $\ell$ in a sufficiently large number field such that
\[
    f(z)\equiv \mathcal{E}_{k     ,pq}^{(\varepsilon)}(z)\modlam. 
\]

    \section{Proof of Lemma \ref{Converse}}\label{proof of converse}
We prove the direct implication of the lemma. Let $a_f(n)$ and $a(n)$ be the $n$th Fourier coefficients of the newform $f\in \mathcal{S}_k^{(\varepsilon)}(N)$ and $\mathcal{E}_{k,N}^{(\varepsilon)}$, respectively.  Since  $\bar{\rho}_{f,\Lambda}\simeq 1\oplus \overline{\chi}_{\ell}^{k-1}$, for any $q\nmid N\ell$
\begin{equation}\label{unramified at q}
    a_f(q)\equiv 1+q^{k-1}\modlam.
\end{equation}

For $p\mid N$, using Theorem \ref{langland}, 
we obtain 
\[
        \oq{tr}\, \left(\bar{\rho}_f|^{}_{D_p}(\oq{Frob}_p)\right) = -\varepsilon(p)p^{k/2-1}(1+p)\equiv 1+p^{k-1} \modlam,
\]

which proves that  
\begin{equation}\label{second condition of conjecture}
    \ell\mid (1+\varepsilon(p)p^{k/2})(1+\varepsilon(p)p^{k/2-1}).
\end{equation}
Also for  $p\mid N$, $a(p) = 1+p^{k-1}+\varepsilon(p)p^{k/2}$ and $a_f(p) = -\varepsilon(p)p^{k/2-1}$, hence
\begin{equation*}
    a_f(p) \equiv a(p) \modlam.
\end{equation*}
In view of \eqref{second condition of conjecture} and Corollary \ref{Up eigenfunction}, ${\mathcal{E}}_{k,N}^{(\varepsilon)}$ is a $\oq{mod}\,\ell$ eigenform. Therefore, combining \eqref{unramified at q} with the previous congruence gives
\begin{equation}\label{final congruence}
     a_f(n) \equiv a(n) \modlam.
\end{equation}
for each  $n$ with $(\ell,n)=1$.
Denote the theta operator by  $\Theta \left(=\frac{1}{2\pi i}\frac{d}{dz}\right)$. Then the congruence relation \eqref{final congruence} backs us to write
\[
        \Theta({f})(z) = \Theta({\mathcal{E}}_{k,N}^{(\varepsilon)})(z) \modlam.
\]

As $\ell >k+1$, appealing a result of Katz on $\Theta$ operator \cite{katz2} stating that $\Theta$ is injective, we obtain
\[
    {f}(z) \equiv {\mathcal{E}}_{k,N}^{(\varepsilon)}(z) \modlam.
\]

\section{Examples}\label{examples}
We now give some numerical examples to demonstrate our results. For simplicity, we write ${\mathfrak q}$ for $e^{2\pi i z}$. We recall Sturm's bound that states: two normalized eigenforms $f(z) = \sum_{n\ge 1}a_f(n){\mathfrak q}^n$ and $g(z) = \sum_{n\ge 1}a_g(n){\mathfrak q}^n \in \mathcal{S}_k(N)$  are congruent modulo a prime ideal $\Lambda$ if $a_f(n)\equiv a_g(n)\modlam$ for all $n\le \left\lfloor \frac{k}{12} \prod_{p\mid N}\left(  1+ \frac{1}{p} \right)\right\rfloor$. We use this bound to verify the congruences in the following examples.   
The computations involved are performed on Sage.



    

\begin{example}
    Take $k=6$, $\ell=5$, $p=19$, and $q=3$. We see that $5\nmid\frac{B_6}{2\cdot 6}\phi(19\cdot 3)(3+1)$. For the Atkin-Lehner eigensystem $\varepsilon$ of $\Gamma_0(57)$ given by $\varepsilon(19)=\varepsilon(3)=1$, we have 
    \[
            5\mid (1+\varepsilon(19)19^3) \quad \oq{and}\quad 5\mid (1+\varepsilon(3)3^2).
    \]
      The hypotheses of Theorem \ref{result for N=p} are satisfied, therefore   existence of a newform $f\in \mathcal{S}_k^{(\varepsilon)}(57)$  and a prime ideal $\Lambda$ lying over $5$ satisfying $f(z)\equiv \mathcal{E}_{6,57}^{(\varepsilon)}(z) \modlam$ is guaranteed. When performing computations on Sage, we obtain that 
$$
        f(z) =   {\mathfrak q} + a{\mathfrak q}^2 - 9{\mathfrak q}^3 + (a^2 - 32){\mathfrak q}^4 + \left(-{a^3}/{6}  - {19a^2}/{6}  + {14a}/{3}  + {410}/{3}\right){\mathfrak q}^5 - 9 a {\mathfrak q}^6 + O({\mathfrak q}^7)
        \in \mathcal{S}_6^{(\varepsilon)}(57)
 $$
  where $a$ is a root of the polynomial $x^4 - x^3 - 90x^2 + 118x + 1412$ and $\Lambda = (5, a+2)$ are the desired newform and prime ideal respectively.    
\end{example}

\begin{example}\label{exm}
    Take $k=2$, $\ell=5$, $p = 19$ and $N=6$, then $5\nmid \phi(6\cdot 19)(2+1)(3+1)$. Taking $\varepsilon(19) = 1$, $\varepsilon(2)=-1$ and $\varepsilon(3)=-1$, we see that assumptions of Theorem $\ref{newform_dp}$ are satisfied. Therefore, there must exist a newform in $\mathcal{S}_2^{(\varepsilon)}(19 d)$, where $1<d\mid 6$, which is congruent to $\mathcal{E}_{2,19d}^{(\varepsilon)}(z)
    $ modulo some prime above the prime 5. Using Sage, we have checked that this is true for $d=2$ and also for $d=3$. More precisely, there are newforms 
        \begin{align*}
f(z)&= {\mathfrak q} + {\mathfrak q}^2 - {\mathfrak q}^3 + {\mathfrak q}^4 - 4  {\mathfrak q}^5 - {\mathfrak q}^6 + 3  {\mathfrak q}^7 + {\mathfrak q}^8 - 2  {\mathfrak q}^9 + O({\mathfrak q}^{10}) \in \mathcal{S}_2^{(\varepsilon)}(38); \quad {\rm and}\\
g(z)&= {\mathfrak q} - 2{\mathfrak q}^2 + {\mathfrak q}^3 + 2{\mathfrak q}^4 + {\mathfrak q}^5 - 2{\mathfrak q}^6 + 3{\mathfrak q}^7 + {\mathfrak q}^9 + O({\mathfrak q}^{10}) \in \mathcal{S}_2^{(\varepsilon)}(57)
    \end{align*}
    satisfying $f(z) \equiv \mathcal{E}_{2,38}^{(\varepsilon)}(z)\pmod 5$ and $g(z) \equiv \mathcal{E}_{2,57}^{(\varepsilon)}(z)\pmod 5$.\\
    Furthermore, for level 114 (the case $d=6$), even though the divisibility conditions in Conjecture \ref{conjecture} are satisfied, there does not exist a newform in $\mathcal{S}_2^{(\varepsilon)}(114)$ with Atkin-Lehner eigensystem $\varepsilon$. Indeed, there is no newform in $\mathcal{S}_2(114)$ with reducible mod 5 representation and so Conjecture \ref{conjecture} is not true for $k=2$, in general.

\end{example}

\begin{example}
Take $k=6$, $\ell=13$, $p=3$, and $N = 5\cdot 31$. We see that $13\nmid \phi(3\cdot 5\cdot 31)(5+1)(31+1)$ and if we set $\varepsilon(3)=-1$, $\varepsilon(5)=1$, and $\varepsilon(31)=1$, we have
\[
    13\mid(1+\varepsilon(3)^3), \quad 13\mid (1+\varepsilon(5)5^2), \quad \text{and}\quad 13\mid (1+\varepsilon(31)31^2).
\]

Thus the assumptions of Theorem \ref{newform_dp} are satisfied, hence there must exist a newform in $\mathcal{S}_6^{(\varepsilon)}(3d)$, for $1< d\mid 155$, which is congruent to $\mathcal{E}_{6,3d}^{(\varepsilon)}$ modulo some prime ideal above the prime $13$.  Performing computations on Sage, we have verified that the newforms 
\begin{align*}
f(z) &= {\mathfrak q} + 7  {\mathfrak q}^2 + 9  {\mathfrak q}^3 + 17  {\mathfrak q}^4 -25 {\mathfrak q}^5 + O({\mathfrak q}^6)  \in \mathcal{S}_{6}^{(\varepsilon)}(15)
\quad \rm{and} \\ 
 g(z) &= {\mathfrak q} + a {\mathfrak q}^2 + 9{\mathfrak q}^3 + (a^2 - 32){\mathfrak q}^4 + O({\mathfrak q}^5) \in \mathcal{S}_{6}^{(\varepsilon)}(93),
\end{align*}
where $a$ is a root of polynomial  $x^8 - 9x^7 - 184x^6 + 1479x^5 + 10247x^4 - 65022x^3 - 172008x^2 + 414408x + 896048$,
satisfy $f(z)\equiv \mathcal{E}_{6,15}^{(\varepsilon)}(z)\pmod{13} $ and $g(z)\equiv \mathcal{E}_{6,93}^{(\varepsilon)}(z)\pmod{\Lambda}$ for the prime\\
$\Lambda =  (13, 303379/33108088 a^7 - 2359769/33108088 a^6 
+\dots 
+ 8389270401/4138511)$.

Moreover, the divisibility conditions of Conjecture $\ref{conjecture}$ are also satisfied. Using Sage, we check that for the newform $f\in \mathcal{S}_6^{(\varepsilon)}(465)$ given by 
\[
        f(z) = {\mathfrak q} + a {\mathfrak q}^2 + 9 {\mathfrak q}^3 + (a^2 - 32) {\mathfrak q}^4 - 25 {\mathfrak q}^5 + 9 a {\mathfrak q}^6 + O({\mathfrak q}^7),
\]
where $a$ is a root of the polynomial $x^{13} - 7x^{12} - 290x^{11} + 1776x^{10} + \cdots - 866822400$ and a prime ideal $\Lambda = (13,a-7)$ over $13$, we have
\[
        f(z) \equiv \mathcal{E}_{6,465}^{(\varepsilon)}(z) \modlam.
\]


\end{example}

	\bibliography{output.bbl}
	\bibliographystyle{alpha}
	\end{document}